\documentclass[a4paper,12pt]{article}
\usepackage{amsmath}
\usepackage{amssymb}
\usepackage{amsthm}
\usepackage{mathrsfs}

\newtheorem{theorem}{Theorem}[section]
\newtheorem{lemma}[theorem]{Lemma}
\newtheorem{proposition}[theorem]{Proposition}
\newtheorem{definition}[theorem]{Definition}

\newcommand{\proba}{{\mathbb{P}}}

\font\gfont=cmmi10 scaled \magstep2

\newcommand{\traits}{\mathbb{T}}
\newcommand{\atomiques}{\mathcal{A}}
\newcommand{\entiers}{\mathbb{N}}

\newcommand{\borel}{\mathscr{M}}

\newcommand{\petit}{\mathscr{P}}

\newcommand{\EE}{\mathbb{E}}
\newcommand{\ZZ}{\mathbb{Z}}
\newcommand{\NN}{\mathbb{N}}
\newcommand{\PP}{\mathbb{P}}

\newcommand{\RR}{\mathbb{R}}

\newcommand{\B}{{\cal B}}

\newcommand{\N}{{\cal N}}
\newcommand{\Si}{\mathscr{P}}

\newcommand{\X}{{\cal X}}

\newcommand{\gun}{\hbox{\gfont\char49}}

\newcommand{\espace}{\mathcal{A}}

\newcommand{\etat}{\vec\eta}

\newcommand{\hypo}{\mathscr{H}}



\newcommand{\real}{\mathbb{R}}

\newcommand{\sm}{{s-}}

\newcommand{\intot}{\displaystyle \int _0^t }

\newcommand{\indiq}{{\bf 1}}

\newcommand{\rit}{\mathbb{R}}

\newcommand{\tm}{\lambda_{*}}
\newcommand{\tn}{B^{*}}

\newcommand{\convex}{\mathscr{K}}

\newcommand{\tnu}{\tilde\nu}

\title {Quasi-stationary distributions for structured birth and
 death processes with mutations}
\author{Pierre Collet
\and Servet Mart{\'\i}nez \and Sylvie M\'el\'eard \and Jaime San
Mart{\'\i}n}

\parindent=0cm

\begin{document}

\maketitle

\begin{abstract}
We study the probabilistic evolution of a  birth and death
continuous time measure-valued process with mutations and
ecological interactions. The individuals are characterized by
(phenotypic) traits that take values in a compact metric space.
 Each  individual can die or generate a new individual.
The birth and death rates may depend on the environment through
the action of the whole population. The offspring can have the
same trait or can mutate to a   randomly distributed trait. We
assume that the population will be extinct almost surely. Our goal
is the study, in this infinite dimensional framework, of
quasi-stationary distributions when the process is conditioned on
non-extinction. We firstly show in this general setting, the
existence of quasi-stationary distributions. This result is based
on an abstract theorem proving the existence of finite
eigenmeasures for some positive operators. We then consider a
population with constant birth and death rates per individual and
prove that there exists a unique quasi-stationary distribution
with maximal exponential decay rate. The proof of uniqueness is
based on  an absolute continuity property with respect to a
reference measure.
\end{abstract}

\textit{ Key words.} quasi-stationary distribution, birth--death
process, population dynamics, measured valued  markov processes.
\bigskip

\textit{ MSC 2000 subject.} Primary 92D25; secondary  60K35,
60J70,  60J80.\\

\section{ Introduction and main results }
\label{sec1}

\subsection{Introduction}
\label{subsub11} We consider a general discrete model describing a
structured population with a  microscopic individual-based and
stochastic point of view. The dynamics takes into account all
reproduction and death events. Each individual is characterized by
an heritable quantitative  parameter, usually called {\it trait},
which can for example be the expression of its genotype or
phenotype. During the reproduction process, mutations of the trait
can occur, implying some variability in the trait space. Moreover,
the individuals can die. In the general model, the individual
reproduction and death rates, as well  as the mutation
distribution, depend on the trait of the individual  and on the
whole population. In particular, cooperation or competition
between individuals in this population are taken into account.

\medskip

In our model the set of traits ${\traits}$ is a compact metric
space with metric $d$. For convenience we assume
$\hbox{diameter}(\traits)=1$. Let ${\cal B}(\traits)$ be the class
of Borel sets in $\traits$. The structured population is described
by a finite point measure on ${\traits}$.  Thus, the state space,
denoted by ${\atomiques}$, is the set of all finite point measures which is contained in
${\cal M}({\traits})$, the set of positive measures on ${\traits}$.

\medskip

A configuration $\eta\in {\atomiques}$ is described by $(\eta_y:
y\in \traits)$ with $\eta_y\in \ZZ_+=\{0,1,...\}$, where only a
finite subset of elements $y\in \traits$ satisfy $\eta_y>0$. The
finite set of present  traits (i.e. traits of alive individuals)
is denoted by
$$
\{\eta \}:=\{y\in {\traits}: \eta_y >0\}
$$
and called the support of $\eta$. For a function $f$ defined on the trait
space $\traits$, we will denote the integral of $f$ with respect
to $\eta$ by
$$\langle \eta, f \rangle = \sum_{y\in \{\eta \}} f(y) \eta_y.
$$
Let $|\cdot|$ be the cardinal number of a set. We denote by
$\#{\eta}=|\{\eta\}|$ the number of active traits and by
$\|\eta\|=\sum\limits_{y\in \{\eta\}}\eta_y$ the total number of
individuals in $\eta$. The void configuration is denoted
by $\eta=0$, so $\#0=\|0\|=0$ and  we define
${\atomiques}^{-0}:={\atomiques} \setminus\{0\}$
the set of nonempty configurations.
\medskip

The structured population dynamics is given by an individual-based
model, taking into account each (clonal or mutation) birth and
death events.

\medskip

The {\it clonal birth rate}, the {\it mutation birth rate} and the
{\it death rate} of an individual with trait $y$ and a population
$\eta\in{\atomiques}$, are denoted respectively by $b_{y}(\eta)$,
$m_y(\eta)$ and $\lambda_{y}(\eta)$.
The total reproduction rate
for an individual with trait $y\in \{\eta\}$ is equal to
$b_{y}(\eta) + m_{y}(\eta)$.
We assume $\lambda_y(0)=b_y(0)=m_y(0)=0$ for all $y\in \traits$,
which is natural for population dynamics.
In what follows we assume that the functions
\begin{eqnarray}
\label{cont111} \lambda_{y}(\eta), \, b_{y}(\eta), \,
m_{y}(\eta)\,: \traits \times \atomiques^{-0} \to \mathbb{R}_+ \,
\hbox{ are continuous and strictly positive.}
\end{eqnarray}

Let  $\sigma$ be a fixed {\it non-atomic probability measure} on
$({\traits}, {\cal B}({\traits}))$. The {\it density location}
function of the mutations is
 $g:{\traits}\times {\traits}:\to \RR_+$, $(y,z)\to
g_y(z)$, where $g_y(\cdot)$ is the probability density of the
trait of the new mutated individual born from $y$. It satisfies
\begin{equation}
\int\limits_{\traits} g_y(z) d\sigma(z)=1\, \hbox{ for all } y\in
\traits \,.
\end{equation}
We assume that the function $g_\cdot(\cdot)$ is jointly
continuous. To simplify notations we express the mutation part
using location kernel $G(\eta,z):{\atomiques} \times {\cal
B}({\traits}) \to \RR^+$ given by
\begin{equation}
\label{hypac} \forall \,\eta\in \atomiques \, \forall z\in \traits
\;,\; G(\eta, z)= \sum\limits_{y\in \{\eta\}} \eta_y
m_y(\eta)g_y(z) = \langle \eta, m_\cdot(\eta) g_\cdot(z)\rangle.
\end{equation}
 Note that the ratio
$G(\eta,z) d\sigma(z)/\int\limits_{\traits}G(\eta, z)d\sigma(z)$
is the probability that, given  there is a mutation from $\eta$,
the new trait is located at $z$.
 Hypothesis (\ref{cont111})  and  Lemma \ref{general-cont} stated
below imply that the function $G$ is continuous on $\atomiques
\times \traits$.

\bigskip

We  define a continuous time pure jump Markov process $Y=(Y_t)$
taking values on $\atomiques$. We denote by
$Q:\atomiques\times \B(\atomiques)\to \RR_+$, $(\eta, B)\to
Q(\eta, B)$, the kernel of measure jump rates given by
\begin{equation}
\label{rates11}
Q(\eta, B)= \sum\limits_{y\in \{\eta\}, \eta+\delta_y\in B}\!
\eta_y b_{y}(\eta) + \sum\limits_{y\in \{\eta\}, \eta-\delta_y\in
B} \eta_y \lambda_{y}(\eta)+\!\! \int\limits_{\eta+\delta_z\in
B}\!\!\!\! G(\eta,z) d\sigma(z) \,.
\end{equation}
The total
mass $Q(\eta)$ of the kernel at $\eta$ is always finite and given
by

\begin{equation}
\label{masaI} Q(\eta)= \sum\limits_{y\in \{\eta\}}\! \left(
Q({\eta, \eta\!+\!\delta_y}) +Q({\eta,
\eta\!-\!\delta_y})\right)+\!\! \int\limits_{\traits \setminus
\{\eta\}}\!\!\!\! Q({\eta, \eta\!+\!\delta_z}) d\sigma(z) \,.
\end{equation}
  Observe that because of \eqref{cont111}
\begin{eqnarray}
\label{locbo1} \forall k \ge 1\ ,\, Q_+(k)=\sup\{Q(\eta): \|\eta\|
\le k\} < \infty\,.
\end{eqnarray}
The construction of a process $Y$ with c\`adl\`ag trajectories
associated with the kernel $Q$, is the canonical one. Assume that
the process starts from $Y_0=\eta$. Then, after an exponential
time of parameter $Q({\eta})$, the process jumps to
$\eta+\delta_y$ for $y\in \{\eta \}$ with probability $Q({\eta,
\eta+\delta_y})/Q({\eta})$, or to $\eta-\delta_y$ for $y\in \{\eta
\}$ with probability $Q({\eta, \eta-\delta_y})/Q({\eta})$, or to a
point $\eta+\delta_z$ for $z\in \traits\setminus \{\eta\}$ with
probability density $Q({\eta, \eta+\delta_z})/Q({\eta})$ with
respect to $\sigma$. The process restarts independently at the new
configuration.

The process $Y$ can have explosions. To avoid this phenomenon and other
reasons, throughout the paper we shall assume that
\begin{eqnarray}
\label{co333} &&\tn=\sup_{\eta\in \atomiques}\sup_{y\in \{\eta\}}
\big(b_{y}(\eta)+m_{y}(\eta)\big) <\infty.
\end{eqnarray}
This condition also guarantees the existence of the process $(Y_t: t\ge 0)$
as the unique solution of a stochastic
differential equation driven by Poisson point measures. This is done
in Section \ref{poissones} following \cite{FM},
\cite{CFM}.

\bigskip
Since $Q(0)=0$, the void configuration is an absorbing state for the
process $Y$. We denote by
$$
T_0=\inf\{t\ge 0: Y_t=0 \}
$$
the extinction time. In what follows,
we will assume that the process  a.s. extincts when starting from
any initial configuration:
\begin{equation}
\label{extinct}
 \forall \, \eta\in {\atomiques}\,: \,\;\;\;
\PP_\eta(T_0<\infty)=1\,.
\end{equation}
So, in our setting we assume that competition between individuals,
often due to the sharing of limited amount of resources, yields
the discrete population to extinction with probability 1.
Nevertheless, the extinction time $T_0$ can be very large compared
to the typical life time of individuals, and for some species one
can observe fluctuations of the population size for large amounts
of time before extinction (\cite{Renault}). To capture this
phenomenon, we work with the notion of quasi-stationary measure,
that is the class of probability measures that are invariant under
the conditioning to non-extinction. This notion has been
extensively studied since the pioneering work of Yaglom for the
branching process in \cite{yaglom} and the classification of
killed processes introduced by Vere-Jones in \cite{vj}. The
description of quasi stationary distributions (q.s.d. for short)
for finite state Markov chains was done in \cite{DS}. For
countable Markov chains the infinitesimal description of q.s.d. on
countable spaces was studied in \cite{np} and \cite{vandoorn}
among others, and the more general existence result in the
countable case was shown in \cite{FKMP}. For one-dimensional
diffusions there is the pioneering  work of Mandl \cite{mandl}
further developed in \cite{cms}, \cite{ms1}, \cite{Steinsaltz} and
for bounded regions one can see \cite{pinsky} among others. For
models of population dynamics and demography see \cite{catt},
\cite{Gosselin} and \cite{cattme}.

\bigskip

Let us recall the definition of a quasi-stationary distribution
(q.s.d.).

\begin{definition}
A probability measure $\nu$ supported by the set of
nonempty configurations $\atomiques^{-0}$ is said to be a q.s.d. if
\begin{equation}
\label{nan10}
\forall \; B\in {\cal B}(\atomiques^{-0}) \, : \;\;\;
\PP_\nu(Y_t\in B \, | \, T_0 >t)=\nu(B) \,,
\end{equation}
where ${\cal B}({\atomiques}^{-0})$ is the class of Borel sets of
${\atomiques}^{-0}$ and where as usual we put
$\PP_\nu=\int\limits_{{\atomiques}^{-0}} \PP_\eta d\nu(\eta)$.
\end{definition}

\medskip

When starting from a q.s.d. $\nu$, the absorption at the state $0$
is exponentially distributed (for instance see \cite{FKMP}).
Indeed, by the Markov property, the q.s.d. equality
$\PP_\nu(Y_t\in d\eta, T_0>t)=\nu(d\eta)\PP_\nu(T_0>t)$ gives
\begin{eqnarray*}
\PP_\nu(T_0\!>\!t\!+\!s)&=&\int\limits_{\atomiques^{-0}}
\PP_\nu(Y_t\in d\eta, T_0\!>\!t\!+\!s)=
\PP_\nu(T_0\!>\!t)\int\limits_{\atomiques^{-0}}\nu(d\eta)
\PP_\eta(T_0\!>\!s)\\
&=& \PP_\nu(T_0\!>\!t)\PP_\nu(T_0\!>\!s).
\end{eqnarray*}
Hence there exists $\theta(\nu)\ge 0$, the exponential decay rate
(of absorption), such that
\begin{equation}
\label{charqsd1}
\forall \, t\ge 0 \, :\;\;\;\;\PP_\nu(T_0>t)=e^{-\theta(\nu) t} \,.
\end{equation}
In nontrivial situations as ours, $0<\PP_\nu(T_0>t)<1$ (for
$t>0$), then $0<\theta(\nu) <\infty$.

\subsection{ The main results}
\label{subsub12}

Let us introduce the global quantity
\begin{eqnarray}
\label{co111}
 \tm&=&\inf_{\eta\in {\atomiques}^{-0}}\inf_{y\in
\{\eta\}} \lambda_{y}(\eta)\,.
\end{eqnarray}

\begin{theorem}
\label{mmainn}
Under the assumption
\begin{equation}
\label{assumpH}   \tn \, < \, \tm
\end{equation}
there exists a q.s.d $\nu$,  with exponential decay rate
$$
\theta(\nu)=-\log \beta \ \hbox{ with } \ \beta=\frac{\int
\EE_\eta(\|Y_1\|) \, d\nu(\eta)} {\int \| \eta\| \, d\nu(\eta)}
>0\,.
$$
\end{theorem}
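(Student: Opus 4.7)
The approach is to produce $\nu$ as a normalized positive left eigenmeasure of the sub-Markov one-step kernel $S_1(\eta,\cdot):=\PP_\eta(Y_1\in\cdot,\,T_0>1)$. Indeed, if $\nu S_1=\beta\,\nu$ as measures on $\atomiques^{-0}$, then taking total mass gives $\beta=\PP_\nu(T_0>1)$, and iterating via the Markov property together with continuity of $t\mapsto\PP_\nu(T_0>t)$ forces $\PP_\nu(T_0>t)=\beta^t=e^{-\theta t}$, so (\ref{nan10}) holds with $\theta(\nu)=-\log\beta$. Moreover, testing the eigenmeasure identity against $f(\eta)=\|\eta\|$ and using the fact that $\|Y_1\|=0$ iff $T_0\le 1$ (so $\EE_\eta[\|Y_1\|\mathbf{1}_{T_0>1}]=\EE_\eta\|Y_1\|$) yields $\beta\int\|\eta\|\,d\nu=\int\EE_\eta\|Y_1\|\,d\nu$, the announced formula; positivity $\beta>0$ is immediate from $\PP_\nu(T_0>1)>0$. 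The whole problem therefore reduces to producing such an eigenmeasure.

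The hypothesis $\tn<\tm$ is fed in through an exponential decay of the first moment. Applying the infinitesimal generator $\pgene$ to $f(\eta)=\|\eta\|$ (each birth event, clonal or mutation, increases $\|\cdot\|$ by $1$, and each death decreases it by $1$) gives
\begin{equation*}
\pgene\|\eta\|=\sum_{y\in\{\eta\}}\eta_y\bigl(b_y(\eta)+m_y(\eta)-\lambda_y(\eta)\bigr)\le(\tn-\tm)\|\eta\|,
\end{equation*}
whence Gronwall delivers $\EE_\eta\|Y_t\|\le\|\eta\|\,e^{(\tn-\tm)t}$. Equivalently, the positive operator $\mu\mapsto\mu S_1$ strictly contracts the first-moment functional with ratio at most $e^{\tn-\tm}<1$.

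With this contraction in hand, I would invoke the abstract eigenmeasure theorem for positive operators announced in the abstract, applied to the renormalized map $\Phi(\mu):=\mu S_1/\PP_\mu(T_0>1)$ on an invariant convex weakly compact set of probability measures. A natural candidate is $K_R:=\{\mu\text{ probability on }\atomiques^{-0}:\int\|\eta\|\,d\mu\le R\}$: Markov's inequality gives $\mu(\|\eta\|>k)\le R/k$, and each sub-level $\{\|\eta\|\le k\}=\bigcup_{n=1}^{k}\{\|\eta\|=n\}$ is a finite union of symmetric powers of the compact space $\traits$, hence compact, so $K_R$ is tight and weakly compact. Continuity of $\Phi$ on $K_R$ rests on the continuity hypothesis (\ref{cont111}), while invariance and non-degeneracy of the normalizer rely on $\PP_\eta(T_0>1)\ge e^{-Q(\eta)}$ combined with the local boundedness (\ref{locbo1}). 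The main obstacle I expect is precisely this technical packaging in a non-locally-compact, infinite-dimensional setting: one must build an invariant compact set on which $\Phi$ is simultaneously well-defined, weakly continuous, and on which $\PP_\mu(T_0>1)$ stays uniformly bounded away from zero, so that the fixed point $\nu$ produced by the abstract theorem (or by a Schauder--Tychonoff argument) is a genuine probability measure on $\atomiques^{-0}$ with $\int\|\eta\|\,d\nu\in(0,\infty)$ and the formula for $\beta$ is nondegenerate. The strict gap $\tn<\tm$, together with the two-sided control implicit in (\ref{cont111}) and (\ref{co333}), is what ultimately makes this balance possible.
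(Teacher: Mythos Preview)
Your overall architecture---produce a probability eigenmeasure of $P_1$ by a Schauder--Tychonoff fixed point on a moment-constrained convex set, then upgrade to a q.s.d.---is exactly what the paper does. But there is a genuine gap at the invariance step. With the linear Lyapunov function you get $\int\|\eta\|\,d(\mu S_1)\le e^{\tn-\tm}R$ for $\mu\in K_R$, and the normalizer obeys the uniform bound $\PP_\mu(T_0>1)\ge c_1:=e^{-\bar\lambda_1}$ with $\bar\lambda_1=\sup_{\|\eta\|=1}\lambda_y(\eta)$ (this is the paper's Lemma~\ref{cotas}). Invariance of $K_R$ under $\Phi$ would then require $e^{\tn-\tm}\le c_1$, i.e.\ $\bar\lambda_1\le\tm-\tn$; but $\bar\lambda_1\ge\tm>\tm-\tn$ since $\tn>0$, so this fails in general. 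In the language of the paper's abstract Theorem~\ref{abstraitg} you need the contraction rate $\gamma$ to be strictly below $c_1$, and the linear function only delivers $\gamma=e^{\tn-\tm}$. The paper's remedy is to use the \emph{exponential} function $\varphi_2(\eta)=e^{a(1)\|\eta\|}$ together with the ODE flow $a'=\tm(1-e^{-a})+\tn(1-e^{a})$: Lemma~\ref{cotas3} gives $P_1\varphi_2(\eta)\le e^{\tm}e^{a_0\|\eta\|}$ with $a_0<a(1)$, so that $P_1\varphi_2\le\gamma\varphi_2+D$ with $\gamma$ \emph{arbitrarily small}. This is the missing idea that makes invariance work regardless of how small $c_1$ is.

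There is also a smaller gap in your upgrade step. From $\tilde\nu P_1=\beta\tilde\nu$ you only obtain $\PP_{\tilde\nu}(T_0>n)=\beta^n$ at integer times, and continuity of $t\mapsto\PP_{\tilde\nu}(T_0>t)$ does not by itself force the value $\beta^t$ in between; more to the point, even the identity $\PP_{\tilde\nu}(T_0>t)=\beta^t$ would not give the \emph{measure} equality $P_t^\dagger\tilde\nu=\beta^t\tilde\nu$ required in (\ref{nan10}). The paper resolves this with Lemma~\ref{semco11}: one averages $\nu=\int_0^1 e^{\theta s}P_s^\dagger\tilde\nu\,ds$ and checks directly that this $\nu$ is an eigenmeasure for every $P_t$.
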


This result is shown in Section \ref{prin11}. It is based on an
intermediate abstract theorem proving the existence of finite
eigenmeasures for some positive operators (Theorem
\ref{abstraitg}).

\medskip In Section \ref{sec4}, we will introduce a natural
$\sigma$-finite measure $\mu$ and show that absolute continuity
with respect to $\mu$ is preserved by the process. We study the
Lebesgue decomposition of a q.s.d. with respect to $\mu$.

\bigskip

In Section \ref{subsub56} we will study the uniform case, which is
given by
\begin{equation}
\label{uniform}
\lambda_{y}(\eta)=\lambda,\;
b_{y}(\eta)=b(1-\rho),\,\; m_{y}(\eta)=b\rho\,,
\end{equation}
where $\lambda$, $b$ and $\rho$ are positive numbers with
$\rho<1$. The property \eqref{assumpH} reads $\lambda>b$. In this
case it can be shown that $\beta=e^{-(\lambda-b)}$, so Theorem
\ref{mmainn} ensures the existence of a q.s.d. with exponential
decay rate $\lambda-b$. We will prove that this q.s.d. is  the
unique one with this decay rate, under the (recurrence) condition
\begin{equation}
\label{hyp-unif} \sigma\otimes \sigma \{(y,z)\in {\traits}^2:
g_y(z)=0\}=0\,,
\end{equation}
and that given the weights of the configuration, the locations  of
the traits under this q.s.d. are absolutely continuous with
respect to $\sigma$.

\begin{theorem}
\label{Theoch1} In the uniform case assume  that $\lambda>b$ and
\eqref{hyp-unif}. Then there is a unique q.s.d. $\nu$ on
$\atomiques^{-0}$, associated with the exponential decay rate
$\theta=\lambda-b$. Moreover $\nu$ satisfies the absolutely
continuous property,
$$
\nu\left( \vec{\eta} \in \bullet \; | \; {\overline \eta} \right)
\, << \, \sigma^{\otimes\# \eta}(\bullet) \,.
$$
\end{theorem}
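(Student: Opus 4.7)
The plan is to deduce existence and the value $\theta=\lambda-b$ directly from Theorem~\ref{mmainn} in the uniform setting, then prove conditional absolute continuity of any q.s.d.\ with this decay rate, and finally derive uniqueness from that absolute continuity via a Krein--Rutman type argument.

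First, in the uniform case the individual rates do not depend on traits, so the total population process $N_t=\|Y_t\|$ is an autonomous linear birth--death chain on $\ZZ_+$ with per-individual birth rate $b=b(1-\rho)+b\rho$ and death rate $\lambda$. A standard computation gives $\EE_\eta(\|Y_t\|)=\|\eta\|\,e^{(b-\lambda)t}$. Since $\tn=b<\lambda=\tm$, Theorem~\ref{mmainn} applies and the ratio appearing there evaluates to $\beta=e^{b-\lambda}$, so every q.s.d.\ produced by that theorem has decay rate $\theta=\lambda-b$. This yields the existence half of the statement.

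Next, for an arbitrary q.s.d.\ $\tilde\nu$ with decay rate $\lambda-b$, I would establish the conditional absolute continuity by decomposing $\tilde\nu=\tilde\nu^{\mathrm{ac}}+\tilde\nu^{\mathrm{s}}$ relative to the reference measure $\mu$ of Section~\ref{sec4}. The preservation result of that section says that absolute continuity w.r.t.~$\mu$ is invariant under the killed semigroup $Q_t(\eta,B)=\PP_\eta(Y_t\in B,\,T_0>t)$. On the other hand, under \eqref{hyp-unif} every mutation event places a unit of mass on a set that is absolutely continuous with respect to $\sigma$, and the probability that, conditional on non-extinction up to time~$t$, each currently alive individual has undergone at least one mutation in its genealogy tends to $1$ as $t\to\infty$. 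Using the q.s.d.\ identity $\tilde\nu Q_t=e^{-(\lambda-b)t}\tilde\nu$, the singular part $\tilde\nu^{\mathrm{s}}Q_t$ must both scale like $e^{-(\lambda-b)t}\tilde\nu^{\mathrm{s}}$ and become asymptotically absorbed into the absolutely continuous component, which forces $\tilde\nu^{\mathrm{s}}=0$. Since $\mu$ is built by pairing a weight distribution with $\sigma^{\otimes\#\eta}$ on the locations, this gives the claimed conditional absolute continuity.

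Finally, write $\tilde\nu=f\,\mu$ with $f\in L^1(\mu)$ nonnegative, so that the q.s.d.\ identity becomes an eigenvalue equation $Q_t^{\dagger}f=e^{-(\lambda-b)t}f$ for the $L^1$-adjoint of the killed semigroup. Condition~\eqref{hyp-unif} implies that $Q_t^{\dagger}$ is positivity-improving on the cone of nonnegative densities in $L^1(\mu)$: starting from any two such densities, after a bounded number of mutation/death transitions one can produce a common strictly positive minorant on a reference class of configurations. The Krein--Rutman theorem then gives a one-dimensional eigenspace associated with the principal eigenvalue $e^{-(\lambda-b)t}$, and normalization as a probability measure pins down $f$, hence $\tilde\nu$, uniquely. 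I expect the main obstacle to lie precisely in this positivity-improving step: although \eqref{hyp-unif} rules out pathological ``isolated'' traits, one must still control in parallel the combinatorial structure of the weight profile $\overline\eta$ (number of atoms and multiplicities) and the joint density of the locations after the controlled sequence of jumps. The elimination of the singular part is also delicate, since a naive transport comparison only yields an inequality; upgrading it to equality requires exploiting that purely clonal surviving lineages, which are the only mechanism preserving a singular support, decay strictly faster than the overall rate $e^{-(\lambda-b)t}$.
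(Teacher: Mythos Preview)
Your existence step is correct and matches the paper: in the uniform case $\tn=b<\lambda=\tm$, and $\EE_\eta(\|Y_1\|)=\|\eta\|\,e^{b-\lambda}$ yields $\beta=e^{b-\lambda}$, hence decay rate $\lambda-b$.

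The remaining two steps, however, contain genuine gaps, and the paper's route is both different and substantially simpler. The central tool you are missing is the explicit \emph{right eigenfunction} $\varphi_1(\eta)=\|\eta\|$, which satisfies $P_t\varphi_1=e^{-(\lambda-b)t}\varphi_1$ exactly, i.e.\ $R_t\varphi_1=\varphi_1$ for $R_t=e^{(\lambda-b)t}P_t$.

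For absolute continuity, your genealogical argument (``each living individual has a mutation in its ancestry with probability tending to $1$'') is only heuristic; it does not separate the absolutely continuous and singular parts of the \emph{measure} $\tilde\nu$, and your own caveat about purely clonal lineages decaying ``strictly faster'' is precisely the missing estimate. The paper bypasses all of this: from $R_t^\dagger(f\mu)\le f\mu$ (preservation of absolute continuity) together with $\int\varphi_1\,dR_t^\dagger(f\mu)=\int R_t\varphi_1\,f\,d\mu=\int\varphi_1\,f\,d\mu$, one gets $R_t^\dagger(f\mu)=f\mu$ \emph{exactly}, because $\varphi_1>0$. Hence $R_t^\dagger\xi=\xi$ for the singular part $\xi$, which contradicts Proposition~\ref{propo2}(ii) unless $\xi=0$. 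No asymptotics in $t$ are needed.

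For uniqueness, your Krein--Rutman plan is problematic: the killed semigroup $Q_t^\dagger$ acts on $L^1(\mu)$ over a non-compact, infinite-dimensional state space, and you have not established any compactness or quasi-compactness; positivity-improving alone does not yield a simple principal eigenvalue in this generality. The paper again avoids spectral theory entirely. Given two q.s.d.\ $\nu,\nu'$ with rate $\lambda-b$, write $\nu'=f\nu+\xi$ (Lebesgue decomposition with respect to $\nu$, not $\mu$). The same $\varphi_1$-trick gives $R_t^\dagger(f\nu)=f\nu$ and $R_t^\dagger\xi=\xi$; mutual singularity of $\nu$ and $\xi$ is then ruled out by Lemma~\ref{conjecture}, which shows every q.s.d.\ charges $\atomiques_{(1,1)}\cap B$ whenever $\mu$ does. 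Finally, $f\equiv 1$ follows from the strict inequality $|R_t^\dagger(\nu-\nu')|<R_t^\dagger|\nu-\nu'|$ integrated against $\varphi_1$. So the eigenfunction $\varphi_1$ replaces both your asymptotic decay estimate and the Krein--Rutman machinery.
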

In this statement,  $\vec{\eta}$ denotes the ordered sequence of
the  elements of the support $\{\eta\}$, (the compact metric space
$(\traits, d)$  being ordered in a measurable way, see Subsection
2.1), and
\begin{equation}
\label{discrete-str}
{\overline \eta}=(\eta_y: y\in \{\eta\})
\end{equation}
is the  associated sequence of strictly positive weights ordered
accordingly.

\bigskip

 In all what follows, the set $\atomiques$ will be endowed with the Prohorov metric
which makes it a Polish space (complete separable metric space).
This metric induces the weak convergence topology for which
$\atomiques$ is closed in the finite positive measure set.
 (See for example
\cite{DV-J} Chapter $7$ and Appendix).

\medskip
Let us give a general smoothness result which will be used several
times later on.
\begin{lemma}
\label{general-cont} Let $F:\atomiques \times \traits\to \RR$ be a
continuous function on $\traits$. Then the function $\hat{F}$
defined on $\atomiques \times \traits$ by
$$
\hat{F}(\eta,z) = \int\limits_{\traits} F(y,\eta,z) \eta(dy)\,,
$$
is continuous.
\end{lemma}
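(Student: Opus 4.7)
Reading the statement with the integrand, $F$ is a continuous function on the product $\traits\times\atomiques\times\traits$. Fix $(\eta,z)\in\atomiques\times\traits$ and an arbitrary sequence $(\eta_n,z_n)\to(\eta,z)$ in the product Polish topology; I will show $\hat F(\eta_n,z_n)\to\hat F(\eta,z)$.

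The plan rests on three observations. First, weak convergence $\eta_n\to\eta$ in $\atomiques$ applied to the constant test function $\gun$ gives $\|\eta_n\|\to\|\eta\|$, so $M:=\sup_n\|\eta_n\|<\infty$. Second, the set $K:=\{\eta_n:n\ge 1\}\cup\{\eta\}$ is compact in $\atomiques$ and $K':=\{z_n:n\ge 1\}\cup\{z\}$ is compact in $\traits$, so, since $\traits$ itself is compact, $\traits\times K\times K'$ is a compact subset of $\traits\times\atomiques\times\traits$. Continuity of $F$ on a compact metric space upgrades to uniform continuity, so for every $\e>0$ there exists $N$ such that
\[
\sup_{y\in\traits}|F(y,\eta_n,z_n)-F(y,\eta,z)|<\e\qquad\text{for all } n\ge N.
\]
Third, the section $y\mapsto F(y,\eta,z)$ is continuous on the compact space $\traits$, hence bounded, and is a legitimate test function for the weak convergence $\eta_n\to\eta$.

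With these in hand I would split
\[
\hat F(\eta_n,z_n)-\hat F(\eta,z)=\int_\traits\!\bigl[F(y,\eta_n,z_n)-F(y,\eta,z)\bigr]\,\eta_n(dy)+\int_\traits F(y,\eta,z)\,(\eta_n-\eta)(dy).
\]
The first term is bounded in absolute value by $\e\,\|\eta_n\|\le\e M$ for $n\ge N$, by the uniform estimate. The second tends to $0$ by weak convergence applied to the fixed continuous bounded function $y\mapsto F(y,\eta,z)$. Thus $\limsup_n|\hat F(\eta_n,z_n)-\hat F(\eta,z)|\le\e M$, and letting $\e\to 0$ concludes.

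The only genuinely delicate point is making the $(\eta,z)$--continuity of $F$ uniform in the $y$--variable, which is why one really needs joint continuity on $\traits\times\atomiques\times\traits$ together with compactness of $\traits$; the bounded mass condition on $\eta_n$ is then free from the Prohorov convergence alone. The rest is a routine $\e/2$--type splitting of an integral against a varying measure.
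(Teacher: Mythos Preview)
Your proof is correct and follows essentially the same route as the paper's: both split the difference into a ``change of $F$'' part controlled by uniform continuity on a compact set (using compactness of $\traits$ and of a convergent sequence together with its limit) and a ``change of measure'' part handled by weak convergence against a fixed continuous bounded test function. The only cosmetic difference is that you combine into two terms what the paper writes as three, and you integrate the first piece against $\eta_n$ rather than $\eta$; neither affects the argument.
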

\begin{proof}
Let $\eta, \tilde{\eta} \in \atomiques$ and $z, z' \in \traits$.
Thus
\begin{eqnarray*}
|\hat{F}(\eta,z) - \hat{F}(\tilde{\eta},z')| &\leq& \langle  \eta,
|F(.,\eta,z)\!-\!F(.,\eta,z')|\rangle \!+\! \langle \eta,
|F(.,\eta,z')\!-\! F(.,\tilde{\eta},z')|\rangle \\
&& + |\ \langle \eta- \tilde{\eta}, F(.,\tilde{\eta},z')\rangle\,
|.
\end{eqnarray*}
Since $\traits$ is a compact set, it is immediate that the two
first  terms are  small if $z$ is close to $z'$  and $\eta$ close
to $\tilde{\eta}$. If $\tilde{\eta}$ is in a small enough
neighborhood of $\eta$, these two atomic measures have the same
weights, and the corresponding traits are close. In particular,
$\tilde{\eta}$ belongs to a compact set, and the smallness of the
last term follows by the equicontinuity of $F$ on compact sets.
\end{proof}


\section{ Poisson construction, martingale  and Feller properties}
\label{poissones}

Recall that \eqref{cont111} and \eqref{co333} are assumed. We now
give a pathwise construction of the process $Y$. As a preliminary
result, we introduce an equivalent representation of the finite
point measures as a finite sequence of ordered elements.

\subsection{Representation of the finite point measures}
\label{sec2}

Since $(\traits,d)$ is a compact metric space there exists a
countable basis of open sets $({\cal U}_i: i\in \NN=\{1,2,..\})$,
that we fix once for all. The representation
$$
{\cal R}:\traits\to \{0,1\}^\NN\,, \;\; z\to {\cal R}(z)=(c_i:i\in
\NN) \; \hbox{ with } \; c_i={\bf 1}(z\in {\cal U}_i)
$$
is an injective measurable mapping, where the set $\{0,1\}^\NN$ is
endowed with the product $\sigma-$field.
On $\{0,1\}^\NN$ we
consider the lexicographical order $\le_l$ which induces the
following order on $\traits$: $z\preceq z' \, \Leftrightarrow \,
{\cal R}(z)\le_l {\cal R}(z')$. This order relation is measurable.

\medskip

The support $\{\eta\}$ of a configuration can be ordered by
$\preceq$ and represented by the tuple
$\vec{\eta}=(y_1,...,y_{\#\eta})$ and its discrete structure
is
$\overline \eta=({\overline \eta}(k):=\eta_{y_k}: k\in \{1,...,\#\eta\})$.
Let us define  $S_0(\eta)=0$ and $S_k(\eta)=\sum\limits_{l=1}^k
\eta_{y_l}$ for $k\in \{1,...,{\#{\eta}}\}$. Remark that
$S_{\#{\eta}}(\eta)=\|\eta\|$. It is convenient to add an extra
topologically isolated point $\partial$ to $\traits$.
Now we can introduce the
functions $H^i: {\atomiques} \mapsto \traits\cup \{\partial\}$ by
$H^0(\eta)=0$ for all $\eta\in \atomiques$ and for $i\ge 1$
$$
H^i(\eta)=\begin{cases}
y_k      &\hbox{if }  i\in (S_{k-1}(\eta), S_{k}(\eta)]  \hbox{ for } k\le \#\eta\\
\partial &\hbox{otherwise}\,.
\end{cases}
$$
The functions $H^i$ are measurable. We extend the functions $b$, $\lambda$ and $m$ to
$\partial$ by putting $b_\partial(\eta)=\lambda_\partial(\eta)=m_\partial(\eta) =0$
for all $\eta\in \atomiques$.

\subsection{Pathwise Poisson construction}

Let $(\Omega, {\cal F}, \PP)$ be a  probability space in which
there are defined two independent Poisson point measures:

\begin{itemize}

\item $(ii)$ $M_1(ds,di,dz,d\theta)$ is a Poisson point measure on
$[0,\infty) \times \mathbb{N} \times \mathbb{T} \times \rit^+$,
with intensity measure $\: ds \left(\sum_{k\geq 1} \delta_k (di)
\right) d\sigma(z) d\theta\:$ (the birth Poisson measure).

\item $(i)$ $M_2(ds,di,d\theta)$ is a Poisson point measures on
$[0,\infty)\times \mathbb{N}\times\rit^+$, with the same intensity
measure $\: ds \left(\sum_{k\geq 1} \delta_k (di) \right)
d\theta\:$ (the death Poisson measure).

\end{itemize}
We denote $({\cal F}_t : t\geq 0)$ the canonical filtration
generated by these processes.

\bigskip

We define the  process $(Y_t :t\geq 0)$ as a $({\cal F}_t: t\geq
0)$-adapted stochastic process such that a.s.  and for all $t\geq
0$,
\begin{align}
Y_t &= Y_0 +
 \int\limits_{[0,t] \times \mathbb{N} \times
\mathbb{T}\times\rit^+}\ \indiq_{\{i \leq \| Y_\sm\|\}}\ \bigg\{
\delta_{H^i(Y_\sm)}\ \indiq_{\left\{\theta \leq\,
b_{H^i(Y_\sm)}g_{H^i(Y_\sm)}(z)\right\}}\notag \\ &+ \delta_{z }\
  \indiq_{\left\{
b_{H^i(Y_\sm)}g_{H^i(Y_\sm)}(z)\leq   \theta \leq\,
b_{H^i(Y_\sm)}g_{H^i(Y_\sm)}(z) + m_{H^i(Y_\sm)}
g_{H^i(Y_\sm)}(z) \right\}}\bigg\} M_1(ds,di,dz,d\theta) \notag \\
&- \int\limits_{[0,t] \times \mathbb{N} \times\rit^+}
\delta_{H^i(Y_\sm)} \ \indiq_{\{i \leq \| Y_\sm\|\}}
\indiq_{\left\{\theta \leq \,\lambda_{H^i(Y_\sm)}(Y_\sm) \right\}}
M_2(ds,di,d\theta). \label{bpe}
\end{align}
The existence of such process is proved in \cite{FM}, as well as
its uniqueness in law.  Its jump rates  are those given by
(\ref{rates11}) so this process has the same law as the process
introduced in Subsection \ref{subsub11}. In particular, the law of
$Y$ does not depend on the choice of the functions $H^i$ neither
on the order defined in Subsection \ref{sec2}.

\begin{proposition}
\label{existence} For any $\eta \in \atomiques$, any $p\geq 1$ and
$t_0>0$, there exists two positive constants $c_p$ and $b_p$
such that
\begin{eqnarray}
\label{esti-p} \EE_\eta(\sup\limits_{t\in[0,t_0]} \| Y_t\|^p )\leq
c_p \ e^{b_p t_0} <\infty.
\end{eqnarray}
\end{proposition}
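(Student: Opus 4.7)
The plan is to exploit a pathwise domination of $\|Y\|$ by a non-decreasing counting process and then close the loop with a standard Gronwall argument. From the SDE \eqref{bpe}, every jump of $Y$ changes $\|Y\|$ by exactly $\pm 1$ — clonal and mutation births add one individual, deaths remove one — and deaths can only decrease $\|Y\|$. Writing $N^+_t$ for the counting process of birth events of $Y$ up to time $t$, this yields pathwise
$$
\sup_{s\in[0,t_0]}\|Y_s\|\;\leq\;\|Y_0\|+N^+_{t_0}.
$$
The task is therefore reduced to an $L^p$-bound on $N^+_{t_0}$. By hypothesis \eqref{co333}, the stochastic intensity of $N^+$ at time $s$, namely $\sum_{y\in\{Y_{s-}\}}(Y_{s-})_y(b_y(Y_{s-})+m_y(Y_{s-}))$, is bounded by $B^*\,\|Y_{s-}\|$.

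To make the compensation rigorous, I introduce the stopping time $\sigma_N:=\inf\{t:N^+_t\geq N\}$; up to $\sigma_N$ we have $\|Y_s\|\leq\|\eta\|+N$ and every intensity is bounded. Applying the compensation formula to $(N^+_t)^p$ and using the elementary inequality $(n+1)^p-n^p\leq K_p(1+n^{p-1})$, I obtain
$$
\EE(N^+_{t\wedge\sigma_N})^p \;\leq\; B^*K_p\,\EE\!\int_0^{t\wedge\sigma_N}\|Y_s\|\bigl(1+(N^+_s)^{p-1}\bigr)\,ds.
$$
Substituting $\|Y_s\|\leq\|\eta\|+N^+_s$ and applying Young's inequality, the right-hand side is majorised by $C_p\int_0^t\bigl(1+\|\eta\|^p+\EE(N^+_{s\wedge\sigma_N})^p\bigr)\,ds$. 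Gronwall's lemma then gives $\EE(N^+_{t\wedge\sigma_N})^p\leq c_p(\|\eta\|)\,e^{b_p t}$ uniformly in $N$; Fatou's lemma removes the localization, and combining with the pointwise bound above yields the claim with a constant of the form $2^{p-1}(\|\eta\|^p+c_p(\|\eta\|))$.

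The main technical point is the martingale bookkeeping on $[0,\sigma_N]$: one must check that the compensator of $(N^+_t)^p$ is integrable, so that the compensated process is a true martingale and expectation of the martingale part vanishes. This is immediate because all intensities are bounded by $B^*(\|\eta\|+N)$ on the localized interval, and the jumps of $(N^+)^p$ are bounded by $(N+1)^p-N^p$. The remaining ingredients — the inequality $(n+1)^p-n^p\leq K_p(1+n^{p-1})$, the Young manipulations to reach a linear-in-$\EE(N^+)^p$ integrand, and the monotone passage $N\to\infty$ — are entirely routine.
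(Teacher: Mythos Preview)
Your proof is correct and follows essentially the same route as the paper: localize by a stopping time, exploit the intensity bound $B^*\|Y_{s-}\|$ from \eqref{co333} together with the increment estimate $(n+1)^p-n^p\le K_p(1+n^{p-1})$, compensate, and close with Gronwall. The only cosmetic difference is that the paper works directly with $\|Y_{t\wedge T_K}\|^p$ (localizing via $T_K=\inf\{t:\|Y_t\|\ge K\}$) instead of passing through the birth counting process $N^+$, which spares the extra Young-inequality manipulation.
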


\begin{proof}
Let us introduce the following hitting times,
\begin{equation}
\label{tempsatteinte}
\forall \, K\in \NN\,: \;\; T_K=\inf\{t\ge 0: \|Y_t\|\ge K\}\,.
\end{equation}

From \eqref{bpe} and neglecting the non-positive term, we easily obtain that
for any $K$,
\begin{eqnarray*}
\|Y_{t\wedge T_K}\|^p &\leq & \|Y_0\|^p +  \int\limits_{D_0}
((\|Y_\sm\|+1)^p-\|Y_\sm\|^p) M_1(ds,di,dz,d\theta)\,,
\end{eqnarray*}
where  $D_0$ is the subset of $[0,t\wedge T_K] \times \mathbb{N}
\times \mathbb{T}\times\RR_+$ which satisfies $i \leq \| Y_\sm\|$
and  $\theta \leq\,b_{H^i(Y_\sm)}g_{H^i(Y_\sm)}(z) +
m_{H^i(Y_\sm)} g_{H^i(Y_\sm)}(z)$.

\medskip

Then, by taking expectations,  from \eqref{co333} and convexity
inequality, we obtain
\begin{eqnarray*}
 \mathbb{E}_\eta(\sup_{t\leq t_0}\|Y_{t\wedge T_K}\|^p) &\leq&
 \|\eta\|^p + B^*p 2^{p-2}\int_0^{t_{0}}
(1+\mathbb{E}_\eta(\sup_{u\leq s\wedge T_K} \| Y_u\|^p))ds.
\end{eqnarray*}

Standard arguments (Gronwall's lemma ) allow us to control the growth
of the r.h.s. with respect to $t_{0}$. In particular, we have
$$
\EE_\eta(\sup\limits_{t\wedge T_K}
\| Y_t\|^p )\leq (\|\eta\|^p + p 2^{p-2} B^*) e^{p 2^{p-2}B^* t_0}.
$$
With $p=1$,  \eqref{esti-p} implies \begin{equation}
\label{decatiempo} K\;\proba_\eta\big(T_{K}<t_{0})\leq
\|\eta\|\;e^{B^{*}t_{0}}\;,
\end{equation}
and thus $T_K\to +\infty$ a.s. as $K\to +\infty$ and the process
is well defined on $\mathbb{R}_+$. Letting $K$ go to infinity
leads to the conclusion of the proof.
\end{proof}

Observe that the process $\|Y\|$ is dominated
everywhere by the integer-valued process $Z$ solution of
\begin{equation}
\label{Zeta}
Z_t=\|Y_0\| +  \int\limits_{[0,t] \times \mathbb{N} \times
\mathbb{T}\times\rit^+}\ \indiq_{\{i \leq \| Z_\sm\|\}}\
\indiq_{\left\{\theta \leq\, B^*\ g_{H^i(Z_\sm)}(z)\right\}}
M_1(ds,di,dz,d\theta),
\end{equation}
which is a birth process with rate $B^*$. This means that a.s.
$\|Y_t\|\le Z_t$. We can establish the following stronger result.

\begin{lemma}
\label{domination}
The process $\|Y\|$ is dominated
by a birth and death process with birth rate $B^*$ and death rate
$\lambda_*$. Then if we assume that $\lambda_*>B^*$, the process
$Y$ is absorbed exponentially fast.
\end{lemma}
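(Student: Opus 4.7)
The strategy is to build, on an enlarged probability space, a linear birth--death chain $\hat Z$ on $\ZZ_+$ with per-capita birth rate $B^*$ and per-capita death rate $\lambda_*$, coupled to $Y$ in such a way that $\|Y_t\|\le \hat Z_t$ almost surely for every $t\ge 0$. Compared with the pure birth bound of \eqref{Zeta}, this captures the downward pressure exerted by deaths. The reason such a coupling should exist is that \eqref{co333} forces the birth rate per individual to be at most $B^*$, while \eqref{co111} forces the death rate per individual to be at least $\lambda_*$; slot by slot, $\|Y\|$ should therefore have fewer births and more deaths than $\hat Z$.

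Concretely, I would enlarge the probability space with an independent auxiliary Poisson point measure $\tilde M_1(ds,di,d\theta)$ on $[0,\infty)\times \NN\times \RR_+$ of intensity $ds\bigl(\sum_{k\ge 1}\delta_k(di)\bigr)d\theta$, and define $\hat Z$ by an SDE analogous to \eqref{bpe}. A birth of $\hat Z$ is triggered either by an atom of $M_1$ with $i\le \|Y_{\sm}\|$ and $\theta\le B^*\,g_{H^i(Y_{\sm})}(z)$ (which, since $b_\cdot+m_\cdot\le B^*$, fires automatically whenever the $M_1$-indicator for $Y$-births in \eqref{bpe} fires), or by an atom of $\tilde M_1$ with $\|Y_{\sm}\|<i\le \hat Z_{\sm}$ and $\theta\le B^*$ (filling the gap above $\|Y\|$). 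A death of $\hat Z$ is triggered by an atom of $M_2$ with $i\le \hat Z_{\sm}$ and $\theta\le \lambda_*$; whenever $i\le \|Y_{\sm}\|$ this is a fortiori a $Y$-death, because $\lambda_*\le \lambda_{H^i(Y_{\sm})}(Y_{\sm})$. Integrating against the intensities shows that $\hat Z$ is a Markov chain on $\ZZ_+$ with transition rates $n\to n+1$ of rate $B^*n$ and $n\to n-1$ of rate $\lambda_*n$, i.e.\ the linear birth--death chain announced.

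The inequality $\|Y_t\|\le \hat Z_t$ would then be obtained by induction on the successive jump times. At a jump time $s$ with $\|Y_{\sm}\|\le \hat Z_{\sm}$, each $Y$-birth forces a $\hat Z$-birth by the observation above; each $\hat Z$-birth coming from $\tilde M_1$ acts on an index $i>\|Y_{\sm}\|$ and hence leaves $Y$ untouched; each $\hat Z$-death at $i\le \|Y_{\sm}\|$ forces a $Y$-death by the rate comparison; and each $\hat Z$-death at $\|Y_{\sm}\|<i\le \hat Z_{\sm}$ leaves $Y$ untouched. Finally a $Y$-death with $\lambda_*<\theta\le \lambda_{H^i(Y_{\sm})}(Y_{\sm})$ acts on $Y$ alone and decreases $\|Y\|$ while $\hat Z$ stays put, which also preserves the inequality. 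In every case $\|Y_s\|\le \hat Z_s$.

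Once the coupling is in place, the assertion of exponential absorption under $\lambda_*>B^*$ is standard. The first moment of the linear birth--death chain satisfies $\EE(\hat Z_t)=\hat Z_0\,e^{(B^*-\lambda_*)t}$, so Markov's inequality yields
\begin{equation*}
\PP_\eta(T_0>t)\le \PP(\hat Z_t\ge 1)\le \EE(\hat Z_t)=\|\eta\|\,e^{-(\lambda_*-B^*)t}.
\end{equation*}
The only genuinely delicate point in the argument is the coupling bookkeeping, i.e.\ matching the three Poisson inputs so as to produce exactly the right rates while preserving the pathwise ordering at every jump time; the extinction estimate itself is then a one-line moment computation.
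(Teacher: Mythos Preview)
Your argument is correct, but it proceeds differently from the paper. The paper constructs the coupling abstractly: it specifies an infinitesimal generator $J$ for a joint process on $\mathscr{J}=\{(\eta,m)\in\atomiques\times\NN:\|\eta\|\le m\}$, listing six transition rates so that the first marginal has the generator of $Y$, the second marginal has the generator of the linear birth--death chain with rates $\tn$ and $\tm$, and $\mathscr{J}$ is invariant. You instead work pathwise, enlarging the Poisson driving noise of Section~\ref{poissones} by an auxiliary $\tilde M_1$ and carving the three Poisson inputs so that every $Y$-birth forces a $\hat Z$-birth and every $\hat Z$-death at an occupied slot forces a $Y$-death; the ordering is then checked jump by jump. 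Both routes yield the same stochastic domination. Your version stays closer to the explicit SDE machinery already set up in the paper and makes the monotonicity completely transparent at the level of sample paths; the paper's generator coupling is more compact and sidesteps the bookkeeping you flag as ``the only genuinely delicate point''. For the exponential absorption, the paper simply invokes \cite{vandoorn}, whereas you give the one-line first-moment bound $\PP_\eta(T_0>t)\le\|\eta\|\,e^{-(\tm-\tn)t}$, which is self-contained and in fact slightly more informative.
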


\begin{proof}
We introduce a coupling  on the subset $\mathscr{J}$ of
$\atomiques\times\entiers$ defined by
$$
\mathscr{J}=\big\{(\eta,m)\in
\atomiques\times\entiers\,:\,\|\eta\|\le m\big\}.
$$
The coupled process is defined by its
infinitesimal generator $J$, given by the rates
$$
\begin{array}{rcl}
J(\eta,m;\eta+\delta_{y},m+1)&=& \eta_y
b_{y}(\eta),  \;\, y\in \{\eta\}\;,\\
J(\eta,m;\eta+\delta_{z},m+1)&=& G(\eta,z),
\;\,  z\not\in \{\eta\} \;,\\
J(\eta,m;\eta,m+1)&=& m\tn-
\sum\limits_{y\in\{\eta\}}\eta_{y}(b_{y}(\eta)+m_{y}(\eta))\,,\\
J(\eta,m;\eta-\delta_{y},m-1)&=&\tm \eta_y \,, \,\;
y\in \{\eta\} \,,\\
J(\eta,m;\eta-\delta_{y},m)&=& \eta_y (\lambda_{y}(\eta)-\tm)\,,
\,\; y\in \{\eta\} \,,\\
J(\eta,m;\eta,m-1)&=&\tm (m-\sum\limits_{y\in\{\eta\}}\eta_y) \,.\\
\end{array}
$$
It is immediate to check that the coordinates of this process have
respectively the law of $Y$ and the law of a birth and death
process with birth rate $\tn$ and death rate $\tm$.
On the other hand when the coupled process starts from $\mathscr{J}$
it remains in $\mathscr{J}$ forever, so the domination follows.

\medskip

In \cite{vandoorn} it is shown that the condition $\lambda_*>B^*$
implies that the birth and death chain is exponentially absorbed.
The above domination implies that so does $\|Y\|$.

\end{proof}

\medskip

It is useful to prove at this stage the following result on
hitting times that only requires the property $B^*<\infty$.

\begin{lemma}
\label{cotasup} For any $t\ge0$ and any $\eta \in
\atomiques^{-0}$, there is a number $c=c(t,\|\eta\|)\in (0,1)$
such that,
\begin{equation}
\label{ade222}
\forall \, K>0\,: \;\;\; \PP_{\eta}\big(T_K\le t\big)\le c^{-1}e^{-c\,K} \,.
\end{equation}
\end{lemma}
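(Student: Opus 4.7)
The plan is to exploit the pathwise domination $\|Y_s\|\le Z_s$ for every $s\ge 0$ built in \eqref{Zeta}, where $Z$ is the pure-birth (Yule) process with per-individual birth rate $\tn$ started at $Z_0=\|\eta\|$. Since $Z$ is nondecreasing in time, $\{T_K\le t\}\subseteq\{\sup_{s\le t}\|Y_s\|\ge K\}\subseteq\{Z_t\ge K\}$, so it suffices to produce a bound of the form $c^{-1}e^{-cK}$ for $\PP_\eta(Z_t\ge K)$.

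The second step is a classical exponential-moment computation for the Yule process. A Yule process of rate $\tn$ starting from a single individual is, at any time $t>0$, geometrically distributed with success parameter $p_t:=e^{-\tn t}$; consequently, starting from $\|\eta\|$ individuals it is a sum of $\|\eta\|$ independent such geometrics, and
\[
\EE_\eta\bigl(e^{sZ_t}\bigr)=\left(\frac{p_t\,e^s}{1-(1-p_t)e^s}\right)^{\|\eta\|}
\]
whenever $s<-\log(1-p_t)$.

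Third, Chernoff's inequality applied to $e^{sZ_t}$ at an admissible $s_0=s_0(t)\in\bigl(0,-\log(1-p_t)\bigr)$ (for instance $s_0=\tfrac12|\log(1-p_t)|$) yields
\[
\PP_\eta(T_K\le t)\;\le\;\PP_\eta(Z_t\ge K)\;\le\;A(t)^{\|\eta\|}\,e^{-s_0(t)\,K},
\]
with $A(t):=p_t\,e^{s_0}/(1-(1-p_t)e^{s_0})$ finite and depending only on $t$. Taking $c=c(t,\|\eta\|)\in(0,1)$ small enough that simultaneously $c\le s_0(t)$, $c^{-1}\ge A(t)^{\|\eta\|}$ and $c\le 1/2$ gives the announced inequality. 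The degenerate case $t=0$, where $\PP_\eta(T_K\le 0)=\One_{\|\eta\|\ge K}$, is handled by a further reduction of $c$ depending on $\|\eta\|$.

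The only conceptual ingredient is the Yule domination, which has already been set up in \eqref{Zeta}; beyond it there is nothing to do but track the constants carefully. In particular the argument uses only the hypothesis $\tn<\infty$ and not the condition $\tm>\tn$.
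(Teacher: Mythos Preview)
Your proof is correct and follows essentially the same route as the paper: both dominate $\|Y\|$ by the Yule process $Z$ from \eqref{Zeta}, use the monotonicity of $Z$ to reduce to a tail bound on $Z_t$, and then exploit the explicit negative-binomial law of $Z_t$. The only cosmetic difference is that the paper writes out the negative-binomial tail sum and declares the exponential bound immediate, while you pass through the moment generating function and Chernoff's inequality; your version has the merit of making the exponential decay in $K$ explicit and of handling the boundary case $t=0$ carefully.
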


\begin{proof}
The proof follows immediately from the domination
of $\|Y\|$ by the birth process $Z$ introduced in (\ref{Zeta}).
Indeed, assume $\|Y_0\| <K$ and denote by $T^{Z}_M$
the smallest time
such that $Z_t\ge M$. Then $T_K\le T^{Z}_K$ a.s..
Therefore, for any $t\ge0$, and any $\eta\in \atomiques^{-0}$
$$
\PP_{\eta}\big(T_K\le t\big)\le \PP_{\|\eta\|}\big(T_K^{Z}\le
t\big)\;.
$$
For a pure birth process (see for example \cite{feller}) we have
$$
\PP_{\|\eta\|}\big(T_K^{Z}\le t\big)\le \PP_{\|\eta\|}
\big(Z_t\ge K\big)
=\sum_{m=K}^\infty \!\left(\begin{array}{c} \!m\!-\!1\!\\
\!\|\eta\|\!-\!1\!
\end{array}\right)e^{-\tn\,\|\eta\|\,t}\left(
1-e^{-\tn\,t}\right)^{m-\|\eta\|}\;.
$$
The result follows at once from this estimate.
\end{proof}

\subsection{Martingale properties}

\medskip

The process $Y$ is Markovian and we describe its infinitesimal
generator, in a weak form, using related martingales. The main
hypotheses here are the boundedness of the total birth rate per
individual (see (\ref{co333})) and the following bound for the
death individual rate: there exist $p\geq 1$ and $c>0$ such that
\begin{equation}
\label{co444}
\sup\limits_{y\in \traits} \lambda_y(\eta)\leq c \, \|\eta\|^p \,.
\end{equation}

We define the weak generator of $Y$. Given  $f:\atomiques \to \RR$,
a measurable and locally bounded function with $f(0)=0$, we define $Lf$ as
\begin{eqnarray}
\label{w_generator} Lf(\eta) &=& \sum\limits_{y\in \{ \eta\} }
\eta_y b_{y}(\eta) \left(f(\eta+\delta_{y})-f(\eta)\right)\\
\nonumber &{}&  +\sum\limits_{y\in \{ \eta \}}\eta_y m_{y}(\eta)
\int\limits_{\traits}
\left(f(\eta+\delta_{z})-f(\eta)\right)g_y(z) d\sigma(z)\\
\nonumber &{}&  +\sum\limits_{y\in \{ \eta\}}\eta_y
\lambda_{y}(\eta) (f(\eta-\delta_{y})-f(\eta))\,.
\end{eqnarray}

\begin{proposition}
\label{martingale}
Let  $f:\RR_+ \times {\atomiques} \to
\mathbb{R}$ be a measurable function such that for any
$\rho \in \atomiques$ the marginal function $f(\bullet, \rho)$
is $C^1$. We assume $f(\bullet,0)=0$ and we take $Y_0=\eta$.

\begin{itemize}
\item[(i)] If $f$ and $\partial_s f$ are  bounded on $[0,t_0]\times\atomiques$, for any
$t_0\ge 0$, then
\begin{equation}
\label{pbm2}
\mathscr{M}^f_t=:
f(t, Y_t) - f(0,\eta) - \intot (\partial_s f(s, Y_s) + Lf (Y_s)) ds
\end{equation}
is a c\`adl\`ag $({\cal F}_t: t\geq 0)$-martingale.

\item[(ii)] Moreover, if there exists a finite $p$
such that for any $t_0\ge 0$ we have
$$
\sup\limits_{0\le t\le t_0} | f(t,\eta)| +
|\partial_t f(t,\eta)| \le C(t_0) (1+\|\eta\|^p),
$$
for some finite $C(t_0)$, then $\mathscr{M}^f$ is a martingale.

\item[(iii)] If the functions
$f, \partial_s f$ are assumed to be continuous, or more
generally locally
bounded,
then $\mathscr{M}^f$ is a local martingale and for any $T_N=\inf\{t>0: \|Y_t\|\ge N\}$ the
process $(\mathscr{M}^f_{T_N\wedge t} : t\ge 0)$ is a martingale.
\end{itemize}
\end{proposition}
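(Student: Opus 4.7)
The plan is to apply the Itô (change-of-variable) formula for pure jump processes to $f(t,Y_t)$ using the Poisson SDE representation \eqref{bpe}, and then to read off the compensator as the integral of $Lf$. Write
\[
f(t,Y_t)-f(0,\eta)=\int_0^t \partial_s f(s,Y_s)\,ds+\sum_{s\le t}\big(f(s,Y_s)-f(s,Y_{s-})\big),
\]
and express the jump sum as a stochastic integral against $M_1$ and $M_2$ by inserting the exact jump sizes indicated by the indicators in \eqref{bpe}. The total mass of the jumps up to time $t$ is bounded on $\{t\le T_N\}$ since on this event the relevant indicators force $i\le N$ and $\theta$ to lie in a bounded set (of measure controlled by $B^*+c N^p$). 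Thus the Poisson integrals are well-defined and integrable up to the stopping time $T_N$.

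The next step is to compensate. The predictable compensator of the jump integral is obtained by replacing $M_j(ds,\cdot)$ by the corresponding intensity $ds\,(\sum_k\delta_k(di))\,d\sigma(z)\,d\theta$; summing the three resulting terms (clonal birth, mutation, death) and carrying out the integrals in $i$ and $\theta$ reproduces exactly the three sums in the definition \eqref{w_generator} of $Lf(Y_s)$. This establishes that, for each $N$,
\[
\mathscr{M}^f_{t\wedge T_N}=f(t\wedge T_N,Y_{t\wedge T_N})-f(0,\eta)-\int_0^{t\wedge T_N}\bigl(\partial_s f(s,Y_s)+Lf(Y_s)\bigr)\,ds
\]
is a square-integrable $({\cal F}_t)$-martingale, which proves (iii) given that $T_N\uparrow\infty$ almost surely by Proposition \ref{existence}.

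To pass from the localized martingale to a true martingale in (i) and (ii) I would use moment bounds. Expanding $Lf$ and using \eqref{co333} together with \eqref{co444} gives the pointwise estimate
\[
|Lf(\eta)|\le C\,\|f\|_\infty\bigl(B^*\|\eta\|+c\,\|\eta\|^{p+1}\bigr)
\]
in case (i), and an analogous bound of the form $C(t_0)(1+\|\eta\|^{2p+1})$ in case (ii). Combining these with the moment estimate \eqref{esti-p} of Proposition \ref{existence} shows that both $\sup_{t\le t_0}|f(t,Y_t)|$ and $\int_0^{t_0}|Lf(Y_s)|\,ds$ lie in $L^1(\PP_\eta)$, hence so does $\sup_{t\le t_0}|\mathscr{M}^f_t|$. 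Dominated convergence then lets me pass to the limit $N\to\infty$ in the identity $\EE_\eta[\mathscr{M}^f_{t\wedge T_N}\mid{\cal F}_s]=\mathscr{M}^f_{s\wedge T_N}$, yielding the martingale property.

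The only delicate point is the bookkeeping in step two: verifying that when the predictable intensity is integrated against the indicator structure of \eqref{bpe}, the sum over $i\in\mathbb{N}$ with $i\le\|Y_{s-}\|$ reproduces the factor $\eta_y$ in front of each $y\in\{\eta\}$, and that the $\theta$-integral of the nested indicators gives the rates $b_y,\,m_yg_y(z),\,\lambda_y$. Once this identification is carried out cleanly (using the definition of $H^i$ from Subsection \ref{sec2} and the convention $b_\partial=m_\partial=\lambda_\partial=0$), the three statements follow essentially from optional stopping and the moment estimates just described.
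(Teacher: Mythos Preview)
Your proposal is correct and follows essentially the same route as the paper: write $f(t,Y_t)-f(0,\eta)-\int_0^t\partial_s f\,ds$ as the sum of jumps, express this sum as integrals against $M_1$ and $M_2$ via the representation \eqref{bpe}, and compensate to identify the drift as $\int_0^t Lf(Y_s)\,ds$. The only difference is organizational: the paper establishes (i) directly (showing the Poisson integrals are in $L^1$ under boundedness of $f$) and then says (ii)--(iii) follow by localization, whereas you prove the localized statement (iii) first and then delocalize via the moment bounds of Proposition~\ref{existence} to obtain (i) and (ii); both orderings are standard and equivalent here.
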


\begin{proof}
Let us prove the first part of the Proposition. For all $t\ge 0$
$$
f(t,Y_t)- f(0,\eta) -
\int_0^t \partial_s f(s,Y_s) ds=\sum\limits_{s\leq t} (f(s,Y_\sm +
(Y_s-Y_\sm))- f(s,Y_\sm))
$$
holds $\PP_\eta$ almost surely. A
simple computation shows that
\begin{align*}
& f(t,Y_t) - f(0,\eta) -\int_0^t \partial_s f(s,Y_s) ds=\\
& \int\limits_{[0,t] \times \mathbb{N} \times \traits\times
\mathbb{R}^+}\!\!\!\!\!\!\!\!\!\!\!\!\indiq_{\{i \leq \| Y_\sm\|\}}\
\bigg\{ \left(f(s,Y_\sm \!+ \ \delta_{H^i(Y_\sm)}) \!-\! f(s,Y_\sm)
\right) \,\indiq_{\{ \theta \leq
b_{H^i(Y_\sm)}(Y_\sm)g_{H^i(Y_\sm)}(z)\}}
\\
& \quad +  \left(f(s,Y_\sm +\ \delta_{z})\! - \! f(s,Y_\sm)\right) \,
\indiq_{\{\theta \leq m_{H^i( Y_\sm)}(Y_\sm)\,
 g_{H^i(Y_\sm)}(z)\}}\bigg\} M_1(ds,di,dz,d\theta) \\
& \, + \!\!\!\!\!\!\!\int\limits_{[0,t] \times  \mathbb{N}
\times\rit^+} \!\!\!\!\!\!\!\!\!\!\!\! \left(f(s,Y_\sm\! -\
\delta_{H^i(Y_\sm)}) \!- \!f(s,Y_\sm) \right)\! \indiq_{\{i \leq
\|Y_\sm\|, \, \theta \leq \lambda_{H^i(Y_\sm)}(Y_\sm)\}}
M_2(ds,di,d\theta),
\end{align*}
where both integrals belong to $L^1(\PP_\eta)$.
Compensating each Poisson measure,
using Fubini's Theorem, and the fact that $\int\limits_{\traits} g_y(z)
d\sigma(z)=1$, we obtain
$$
f(t, Y_t) - f(0,\eta) - \int_0^t (\partial_s f(s, Y_s) +Lf (Y_s)) ds
$$
is a martingale. The rest of the Proposition is proved by
localization arguments, justified by the result and the proof of
Proposition \ref{existence}.
\end{proof}

\subsection{Feller property of the semi-group}

Let $\N=(\N_t:t\ge 0)$ be the number of jumps for the process $Y$.
We shall prove by induction the following result.

\begin{lemma}
\label{lem0}
Assume that $f: \RR_+\times \atomiques \to \RR$ is a
bounded continuous function. Then for all $m\ge 0$
$$
(t,\eta)\to \EE_\eta(f(t,Y_t), \, \N_t=m)
$$
is a continuous function.
\end{lemma}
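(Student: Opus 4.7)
The plan is to proceed by induction on $m \ge 0$, writing $h_m(t,\eta) := \EE_\eta(f(t,Y_t),\N_t=m)$ and showing that each $h_m$ is jointly continuous on $\RR_+\times\atomiques$.

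For the base case $m=0$, on the event $\{\N_t=0\}$ the process has not jumped by time $t$, so $Y_t=\eta$, and the sojourn time at $\eta$ is exponential of parameter $Q(\eta)$. Hence $h_0(t,\eta)=f(t,\eta)\,e^{-Q(\eta)t}$, and continuity reduces to continuity of $Q$. From \eqref{masaI}, together with the non-atomicity of $\sigma$, one has $Q(\eta)=\langle\eta,\,b_\cdot(\eta)+\lambda_\cdot(\eta)+m_\cdot(\eta)\rangle$, so Lemma \ref{general-cont} applied to $F(y,\eta):=b_y(\eta)+\lambda_y(\eta)+m_y(\eta)$ (continuous by \eqref{cont111}) supplies the required continuity; the isolated configuration $\eta=0$ requires no argument.

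For $m\ge 1$, I would condition on the first jump time $\tau_1$, which under $\PP_\eta$ with $\eta\ne 0$ is exponential of parameter $Q(\eta)$ and independent of the post-jump location $Y_{\tau_1}\sim Q(\eta,\cdot)/Q(\eta)$. The strong Markov property combined with the identity $\{\N_t=m\}=\{\tau_1\le t\}\cap\{\N'_{t-\tau_1}=m-1\}$, where $\N'$ counts jumps of the restarted process, yields
$$
h_m(t,\eta)=\int_0^t e^{-Q(\eta)s}\,\Phi_{m-1}(t-s,\eta)\,ds,\qquad \Phi_{m-1}(u,\eta):=\int h_{m-1}(u,\zeta)\,Q(\eta,d\zeta).
$$
Once joint continuity of $\Phi_{m-1}$ is in hand, joint continuity of $h_m$ follows by dominated convergence in the $s$-integral, using that $|h_{m-1}|\le\|f\|_\infty$ and that $Q$ is locally bounded by \eqref{locbo1}.

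The core task is thus the joint continuity of $\Phi_{m-1}$. Splitting the kernel into birth, death, and mutation parts,
$$
\Phi_{m-1}(u,\eta)=\int F^{b}(y,u,\eta)\,\eta(dy)+\int F^{\lambda}(y,u,\eta)\,\eta(dy)+\int F^{\mu}(z,u,\eta)\,\sigma(dz),
$$
with $F^{b}(y,u,\eta)=b_y(\eta)h_{m-1}(u,\eta+\delta_y)$, $F^{\lambda}(y,u,\eta)=\lambda_y(\eta)h_{m-1}(u,\eta-\delta_y)$, and $F^{\mu}(z,u,\eta)=G(\eta,z)h_{m-1}(u,\eta+\delta_z)$. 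Each integrand is jointly continuous in its arguments: $b,\lambda$, and $G$ by \eqref{cont111} and Lemma \ref{general-cont}, and the $h_{m-1}$-factors by the inductive hypothesis combined with continuity of $(y,\eta)\mapsto\eta\pm\delta_y$. The first two $\eta$-integrals are then continuous in $\eta$ by Lemma \ref{general-cont}; the $\sigma$-integral on the compact $\traits$ is handled by dominated convergence. Joint continuity in $(u,\eta)$ follows by combining these with the uniform boundedness of $h_{m-1}$.

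The main delicate point I expect is justifying continuity of $(y,\eta)\mapsto\eta\pm\delta_y$ and of the composed $h_{m-1}$-factors in the Prohorov topology on $\atomiques$: as in the proof of Lemma \ref{general-cont}, one must exploit that integer-valued point measures sufficiently close to $\eta$ have the same multiset of weights as $\eta$ with support atoms converging to those of $\eta$, which is what makes the subtraction of a Dirac mass well behaved along converging sequences. Once this has been pinned down, the remaining verifications are routine and the induction closes.
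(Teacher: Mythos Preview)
Your proposal is correct and follows essentially the same route as the paper: a first-jump decomposition combined with induction on $m$, the base case $h_0(t,\eta)=f(t,\eta)e^{-Q(\eta)t}$, and continuity of the three birth/death/mutation pieces via Lemma~\ref{general-cont} and dominated convergence. The only cosmetic difference is that the paper first establishes continuity in $t$ for all $m$ by a direct small-time-increment argument and then runs the induction for continuity in $\eta$, whereas you carry joint continuity through the induction in one pass; both organizations work, and your explicit flagging of the $(y,\eta)\mapsto\eta\pm\delta_y$ issue is a point the paper leaves implicit.
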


\begin{proof}
We notice that continuity and uniform continuity on every
$\atomiques_k$, $k\ge 1$ are equivalent because these sets are
compact. Also we have that $|f|$ is bounded on
$[0,t_0] \times \left(\bigcup_{k=1}^n \atomiques_k\right) $ for any
$t_0,n$ and we denote by $\|f\|_{t_0,n}$ its supremum on this set.
Denote by $\ell=\|\eta\|$ and $n=m+\ell+1$.

\medskip

We first prove the continuity on time. For this purpose, we assume
that $0\le u\le t\le t_0$ where we assume that $t,u$ are close
and $t_0$ is fixed. From
$$
f(t,Y_t)=f(t,Y_u) {\bf 1}_{\N_t=\N_u}+ f(t,Y_t) {\bf 1}_{\N_t\neq
\N_u}
$$
we find (recall the notation (\ref{locbo1})),
\begin{eqnarray*}
&{}& |\EE_\eta(f(t,Y_t),  \N_t=m)-\EE_\eta(f(u,Y_u),  \N_u=m)|\\
&{}& \le \sup\limits_{\|\xi\|\leq \ell+m} |f(t,\xi)-f(u,\xi)| +
\|f\|_{t_0,n} Q_+(m+\ell) \left((t-u)+o(t-u)\right)\,.
\end{eqnarray*}
Since the set $\big\{\xi\,\big|\,\|\xi\|\leq \ell+m\big\}$ is compact,
it follows from the uniform continuity of $f$ on compact sets that the
first term on the r.h.s. is small if $|t-u|$ is small.
Hence the result follows.

\bigskip

So in what follows we consider that $t=u$ and we prove continuity
on $\eta$. We will do it by induction on $m$. In the case $m=0$ we
have $\EE_\eta(f(t,Y_t),\, \N_t=0)=f(t,\eta) e^{-Q(\eta) t}$ which
is clearly continuous on $\eta$. Now we prove the induction step,
so we assume that the statement holds for $m$ and all continuous
functions $f$. We have
$$
\EE_\eta(f(t,Y_t), \N_t\!=\!m\!+\!1)\!=\! \int\limits_0^t \!
(A_1(\eta,m,t\!-\!s)\!+
A_2(\eta,m,t\!-\!s)\!+A_3(\eta,m,t\!-\!s))e^{-Q(\eta) s} \, ds,$$
where
\begin{eqnarray*}
A_1(\eta,m,t\!-\!s)&=&\sum\limits_{y\in \eta}\eta_{y} b_{y}(\eta)
\EE_{\eta +\delta_{y} }(f(t-s,Y_{t-s}),\, \N_{t-s}=m)\\
A_2(\eta,m,t\!-\!s)&=&\sum\limits_{y\in \eta}\eta_{y}\lambda_{y}(\eta)
\EE_{\eta -\delta_{y} }(f(t\!-\!s,Y_{t-s}), \N_{t-s}=m)\\
A_3(\eta,m,t\!-\!s)&=& \int\limits_{\traits} \EE_{\eta +\delta_z
}(f(t-s,Y_{t-s}), \N_{t-s}=m) G(\eta,z)\sigma(dz).
\end{eqnarray*}
Using Lemma \ref{lem0}, condition (\ref{cont111}) and Lemma
\ref{general-cont}, it is immediate that the functions $A_1$,
$A_2$ and $A_3$ are continuous in $(t,\eta)$. We conclude by the
Dominated Convergence Theorem since $f$ is bounded.
\end{proof}

\bigskip

\begin{proposition}
\label{feller}
Let $f:  \RR_+\times\atomiques \to \RR$ be a
bounded continuous function. Then
$$
(t,\eta)\to \EE_\eta(f(t,Y_t))
$$
is a continuous bounded function.
\end{proposition}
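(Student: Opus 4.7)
The plan is to decompose $\EE_\eta(f(t,Y_t))$ according to the number $\N_t$ of jumps of $Y$ up to time $t$, and to combine the term-by-term continuity given by Lemma~\ref{lem0} with a uniform tail estimate on $\N_t$. By dominated convergence (since $f$ is bounded),
$$
\EE_\eta(f(t,Y_t))=\sum_{m=0}^{\infty}\EE_\eta\bigl(f(t,Y_t),\,\N_t=m\bigr),
$$
each term being continuous in $(t,\eta)$ by Lemma~\ref{lem0}. It therefore suffices to show that, for any fixed $(t_0,\eta_0)\in[0,\infty)\times\atomiques$, one can find a neighborhood $V$ of $(t_0,\eta_0)$ on which $\sup_{(t,\eta)\in V}\PP_\eta(\N_t\ge M)\to 0$ as $M\to\infty$.

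To obtain this uniform tail bound, I would first restrict to a neighborhood of $\eta_0$ on which $\|\eta\|$ is bounded: since $\eta\mapsto\|\eta\|=\langle\eta,1\rangle$ is continuous for weak convergence on $\atomiques$ (the constant function $1$ is bounded continuous on the compact set $\traits$) and takes integer values, $\|\eta\|$ is identically $\|\eta_0\|$ on a small enough neighborhood, so we take $V=[0,t_0+1]\times U$ for such a neighborhood $U$ of $\eta_0$. Next, let $B_t$ and $D_t$ denote the total number of birth and of death events up to time $t$. The conservation relations $\N_t=B_t+D_t$ and $\|Y_t\|=\|\eta\|+B_t-D_t\ge 0$ give $\N_t\le 2B_t+\|\eta\|$. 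Using the Poisson construction~\eqref{bpe} together with assumption~\eqref{co333}, the compensator of $B_t$ is bounded by $B^*\int_0^t\|Y_s\|\,ds$, whence
$$
\EE_\eta[\N_t]\;\le\;\|\eta\|+2B^*\int_0^t\EE_\eta[\|Y_s\|]\,ds.
$$
Proposition~\ref{existence} with $p=1$ ensures that $\EE_\eta[\|Y_s\|]$ is bounded uniformly for $s\in[0,t_0+1]$ and $\|\eta\|\le\|\eta_0\|$, so $\EE_\eta[\N_t]\le C(t_0,\|\eta_0\|)$ on $V$. Markov's inequality then gives $\PP_\eta(\N_t\ge M)\le C(t_0,\|\eta_0\|)/M$, which is uniformly small on $V$.

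Combining these pieces, for $\varepsilon>0$ I would choose $M$ so large that $\|f\|_\infty\cdot C(t_0,\|\eta_0\|)/M<\varepsilon/2$; Lemma~\ref{lem0} applied to the truncated sum $\sum_{m=0}^{M-1}\EE_\eta(f(t,Y_t),\,\N_t=m)$ yields its continuity at $(t_0,\eta_0)$, while the remainder is bounded by $\varepsilon/2$ on $V$, so $(t,\eta)\mapsto\EE_\eta(f(t,Y_t))$ is continuous at $(t_0,\eta_0)$; boundedness is immediate from $|f|\le\|f\|_\infty$. The main obstacle is the uniform tail control on $\N_t$: because the death rates are not assumed bounded, $\N_t$ cannot be directly dominated by a Poisson number, and the identity $\N_t\le 2B_t+\|\eta\|$ is what reduces the problem to the birth side, where the global bound $B^*$ from~\eqref{co333} and the first-moment estimate of Proposition~\ref{existence} close the argument.
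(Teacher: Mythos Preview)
Your proof is correct and follows the same overall strategy as the paper: decompose $\EE_\eta(f(t,Y_t))$ according to the number of jumps, invoke Lemma~\ref{lem0} term by term, and close the argument with a uniform tail bound on $\PP_\eta(\N_t\ge M)$ over a neighborhood where $\|\eta\|$ is fixed. The only real difference lies in how the tail is obtained: the paper bounds $\N_t$ through the pure birth process $Z$ of~\eqref{Zeta} and the explicit exponential estimate of Lemma~\ref{cotasup}, yielding $\PP_\eta(\N_t\ge M)\le a^{-1}e^{-aM}$, whereas you use the identity $\N_t\le 2B_t+\|\eta\|$, the compensator bound $B^*\int_0^t\|Y_s\|\,ds$ for $B_t$, the first-moment estimate of Proposition~\ref{existence}, and Markov's inequality, obtaining only the polynomial rate $C/M$. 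Both suffice here; your route is slightly more self-contained (no need for the explicit distribution of the pure birth process), while the paper's gives a sharper quantitative estimate.
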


\begin{proof}
Using Lemma \ref{domination} (1) and the proof of Lemma
\ref{cotasup},  we obtain that for each $\eta\in \atomiques$,
$t>0$, there exists $a=a(t,\| \eta\|)>0$ such that
for any positive integer $K$,
\begin{eqnarray}
\nonumber
\PP_{\eta}\big(T_K^{\N}\le t\big) &=&
\PP_{\eta}\big(\N\geq M\big) \le \PP_{\eta}\big(Z\geq K +
\|\eta\|\big)\\
\label{bodim}
&=&\PP_{\|\eta\|}\big(T_K^{Z}\le t\big) \le a^{-1}e^{-a \, K} \, .
\end{eqnarray}

Assume that $\eta'$ is closed to $\eta$, and consider $u,t$ close
and smaller than $t_0$ fixed. Then
$$
\begin{array} {l}
|\EE_\eta(f(t,Y_t))-\EE_{\eta'}(f(u,Y_u))|\le
2 \|f\|\PP_{\|\eta\|}(Z_{t_0}\ge M + \|\eta\|)+\\
\quad\quad\quad\sum\limits_{m=0}^K |\EE_\eta(f(t,Y_t),\,
\N_t=m)-\EE_{\eta'}(f(u,Y_u),\, \N_u=m)|.
\end{array}
$$
The result follows by taking a large $K$, and by applying the
bound (\ref{bodim}) and Lemma \ref{lem0}.
\end{proof}

\section{ Quasi-stationary distributions}
\label{sec3}

\subsection{The process killed at $0$.}
Let us recall that the state $0$ is absorbing for the population
process $Y$. We have moreover assumed in \eqref{extinct} that the
population goes almost surely to extinction, that is
$\PP(T_0<\infty)=1$. This is in particular true if
$\lambda_*>B^*$. Our aim is the study of existence and possibly
uniqueness of a q.s.d. $\nu$, which is a probability measure on
$\atomiques^{-0}$ satisfying $\PP_{\nu}(Y_t\in B \, | \, T_0
>t)=\nu (B)$. Let us now give some preliminary results for
quasi-stationary distributions (q.s.d.).

\medskip

Since by condition \eqref{assumpH}, the process $Y$ is almost
surely but not immediately absorbed, and since starting from a
q.s.d. $\nu$, the absorption time is exponentially distributed
(see (\ref{charqsd1})), then its exponential decay rate  satisfies
$0<\theta(\nu) <\infty$. Since $0$ is absorbing it holds
$\PP_\nu(Y_t\in B)=\PP_\nu(Y_t\in B, T_0>t)$ for $B\in {\cal
B}(\atomiques^{-0})$. So, the q.s.d. equation can be written as,
\begin{equation}
\label{nant5} \forall B\in {\cal B}(\atomiques^{-0})\,,\quad \;\,
\nu(B)=e^{\theta(\nu) t} \PP_\nu(Y_t\in B)\, .
\end{equation}

From the above relations we deduce that
for all $\theta<\theta(\nu)$, $\EE_\nu(e^{\theta T_0})<\infty$.
So, for all $\theta < \theta(\nu)$, $\nu-$a.e. in ${\eta}$
it holds: $\EE_{\eta}(e^{\theta T_0})<\infty$.
Then, a necessary condition for the existence of a q.s.d. is
exponential absorption at $0$, that is
\begin{equation}
\label{nan2} \exists {\eta}\in \atomiques^{-0},\;\  \exists
\theta>0, \;\  \EE_{\eta}(e^{\theta T_0})<\infty.
\end{equation}

Let $(P_t: t\ge 0)$ be the semigroup of the process before killing
at $0$, acting on the set $C_b(\atomiques^{-0})$ of real
continuous bounded functions defined on $\atomiques^{-0}$:
$$
\forall \eta \in \atomiques^{-0}, \ \forall \, f\in
C_b(\atomiques^{-0})\,: \;\; (P_t f)(\eta)=\EE_\eta (f(Y_t),
T_0>t)\,.
$$

Let us observe that  for any continuous and bounded function
$h:\atomiques\to \RR$ and for any $\eta\in\atomiques^{-0}$, we have
\begin{equation}
\label{les2sm}
\EE_\eta (h(Y_t))=\EE_\eta (h(Y_t),
T_0>t)+h(0)\proba_\eta\big(T_0\le t\big)\;.
\end{equation}
In particular, if $h(0)=0$, we get $\ \EE_\eta (h(Y_t))=\EE_\eta
(h(Y_t), T_0>t)$.

\bigskip

We denote by $P_t^{\dagger}$  the action of the semigroup on
$\borel(\atomiques^{-0})$, defined for any positive measurable
function $f$ and any $v\in \borel(\atomiques^{-0})$ by
$$
P_t^{\dagger}v (f)=v(P_t f).
$$
From relation (\ref{nant5}) we get that a probability measure $\nu$ is a q.s.d. if and only if
there exists $\theta>0$  such that for all $t\ge 0$
$$
\nu (P_t f) = e^{-\theta t} \nu(f) \,,
$$
holds for all positive measurable function $f$, or equivalently for all $f\in
C_b(\atomiques^{-0})$. Then $\nu$ is a
q.s.d. with exponential decay rate $\theta$ if and only if it
verifies
\begin{equation}
\label{eigenmeasure} \forall t\ge 0\,: \;\;\,  P_t^{\dagger}\nu=
e^{-\theta t} \nu\,.
\end{equation}

\subsection{Some properties of q.s.d.}

Let us show that the existence of a q.s.d. will be proved if for a
fixed strictly positive time, the eigenmeasure equation
\eqref{eigenmeasure} is satisfied. In what follows we denote by
$ \petit(\atomiques^{-0})$ the set of probability measures on
$\atomiques^{-0}$.

\medskip

\begin{lemma}
\label{semco11} Let $\tnu\in \petit(\atomiques^{-0})$ and
$\beta>0$ such that $P_1^{\dagger} \tnu=\beta \tnu$. Then
$\beta<1$ and there exists $\tnu$ a q.s.d. with exponential decay
rate $\theta:=-\log \beta>0$.
\end{lemma}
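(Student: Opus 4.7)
The plan is to bootstrap the single-time eigenvalue identity $P_1^\dagger\tnu=\beta\tnu$ to the family of identities $P_t^\dagger\mu=e^{-\theta t}\mu$ characterizing q.s.d.\ via \eqref{eigenmeasure}. Since the hypothesis at $t=1$ alone does not imply that $\tnu$ itself is a q.s.d.\ (we have no a priori uniqueness of eigenmeasures with eigenvalue $\beta$ at this stage), I will produce a q.s.d.\ as an exponentially weighted average of $P_s^\dagger\tnu$ over $s\in[0,1]$.

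\emph{Step 1 ($\beta<1$).} The semigroup relation $P_{s+t}=P_s P_t$ dualizes to $P_{s+t}^\dagger=P_t^\dagger P_s^\dagger$, so iterating the hypothesis gives $P_n^\dagger\tnu=\beta^n\tnu$ for every $n\in\NN$. Testing against the constant function $\mathbf{1}$ yields $\beta^n=\PP_\tnu(T_0>n)$, which tends to $0$ by \eqref{extinct} and dominated convergence. Hence $0<\beta<1$ and $\theta:=-\log\beta>0$.

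\emph{Step 2 (averaging).} Define the finite positive measure on $\atomiques^{-0}$
\[
\nu_0(B):=\int_0^1 e^{\theta s}\,(P_s^{\dagger}\tnu)(B)\,ds,\qquad B\in\B(\atomiques^{-0}).
\]
Its total mass $\int_0^1 e^{\theta s}\PP_\tnu(T_0>s)\,ds$ is finite and strictly positive (the integrand is bounded by $e^\theta$, and $\PP_\tnu(T_0>s)\to 1$ as $s\to 0$ by the right continuity of paths and $\tnu(\{0\})=0$). For $t\in[0,1]$, the substitution $u=s+t$ gives
\[
P_t^{\dagger}\nu_0=e^{-\theta t}\int_t^{1+t}e^{\theta u}P_u^{\dagger}\tnu\,du
=e^{-\theta t}\Bigl(\nu_0-\int_0^t e^{\theta u}P_u^{\dagger}\tnu\,du+\int_1^{1+t}e^{\theta u}P_u^{\dagger}\tnu\,du\Bigr).
\]
Setting $u=1+v$ in the last integral and using $P_{1+v}^\dagger\tnu=P_v^\dagger P_1^\dagger\tnu=\beta P_v^\dagger\tnu$ from Step 1 gives $\int_1^{1+t}e^{\theta u}P_u^\dagger\tnu\,du=\int_0^t e^{\theta v}P_v^\dagger\tnu\,dv$, so the two correction integrals cancel and $P_t^{\dagger}\nu_0=e^{-\theta t}\nu_0$ on $[0,1]$.

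\emph{Step 3 (extension and conclusion).} For any $t=k+r$ with $k\in\NN$, $r\in[0,1]$, Step 2 (applied with $t=1$ to get $P_1^\dagger\nu_0=e^{-\theta}\nu_0$, and with $t=r$) together with the semigroup property gives $P_t^\dagger\nu_0=(P_1^\dagger)^k P_r^\dagger\nu_0=e^{-\theta t}\nu_0$. Thus $\nu:=\nu_0/\nu_0(\atomiques^{-0})$ is a probability measure on $\atomiques^{-0}$ satisfying $P_t^\dagger\nu=e^{-\theta t}\nu$ for all $t\ge 0$, which by \eqref{eigenmeasure} is exactly a q.s.d.\ with exponential decay rate $\theta=-\log\beta$. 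The only delicate step is the algebraic cancellation in Step 2, where the exponential weight $e^{\theta s}$ is tuned precisely so that the boundary terms produced by the time shift in the integral are absorbed by the eigen-recursion $P_{1+v}^\dagger\tnu=\beta P_v^\dagger\tnu$.
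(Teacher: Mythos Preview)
Your proof is correct and follows essentially the same route as the paper: both define the averaged measure $\nu=\int_0^1 e^{\theta s}P_s^\dagger\tnu\,ds$, verify $P_t^\dagger\nu=e^{-\theta t}\nu$ first for $t\in[0,1]$ via a change of variables and the relation $P_{1+v}^\dagger\tnu=\beta P_v^\dagger\tnu$, and then extend to all $t\ge 0$ by writing $t=n+r$. Your argument for $\beta<1$ via $\beta^n=\PP_{\tnu}(T_0>n)\to 0$ (using the a.s.\ extinction hypothesis \eqref{extinct}) is slightly more explicit than the paper's one-line appeal to $\PP_{\tnu}(T_0>1)<1$, and you also make the final normalization step explicit, but these are cosmetic differences.
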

\begin{proof}
From $\beta=\tnu P_1(\atomiques^{-0})=\PP_{\tnu}(T_0>1)<1$, we get
$\beta<1$, so $\theta:=-\log \beta>0$. We must show that there
exists $\nu\in \petit(\atomiques^{-0})$ such that
$P_t^{\dagger}\nu =e^{-\theta t}\,\nu$ for all $t\ge 0$. Consider,
$$
\nu=\int_{0}^{1}e^{\theta s}P_s^{\dagger} \tnu \,ds\;.
$$
For $t\in(0,1)$ we have
\begin{eqnarray*}
P_t^{\dagger} \nu&=&\int_{0}^{1} \! e^{\theta s}\, P_{t+s}^{\dagger}\tnu \,ds
=\int_{0}^{1-t}\!\! e^{\theta s} P_{t+s}^{\dagger} \tnu \,ds+
\int_{1-t}^{1} \! e^{\theta s}  P_{t+s}^{\dagger}\tnu \,ds \\
&=&\int_{t}^{1}e^{\theta (u-t)} P_{u}^{\dagger}\tnu \,du+
\int_{1}^{1+t} e^{\theta(u-t) } P_{u}^{\dagger} \tnu \,du\\
&=&e^{-\theta t}\int_{t}^{1}e^{\theta u} P_{u}^{\dagger}\tnu \,du+
e^{-\theta t}\int_{0}^{t} e^{\theta u}
e^{\theta} P_{u}^{\dagger} P_{1}^{\dagger} \tnu\,du =e^{-\theta t} \nu\;.
\end{eqnarray*}
For $t\ge 1$ we write $t=n+r$ with $0\le r<1$ and $n\in\NN$. We have
$$
P_{t}^{\dagger} \nu= P_{r}^{\dagger}P_{n}^{\dagger}\nu =
{\beta}^{n} P_{r}^{\dagger} \nu =
e^{-n\theta}  e^{-r\theta} \nu=e^{-\theta t}\nu\;.
$$
\end{proof}

Note that
\begin{equation}
\label{expI}
\theta(\nu)=\lim\limits_{t\to 0^+} \frac{1}{t}
\left(1-\PP_\nu(T_0>t)\right)=\lim\limits_{t\to 0^+}
\frac{\PP_\nu(T_0\le t)}{t}\,.
\end{equation}

In the next result we give an explicit expression for the
exponential decay rate associated to a q.s.d. We will use the
identification between $y\in \traits$ and the singleton
configuration that gives unit weight to the trait $y$.

\begin{lemma}
\label{thetaI} If $\nu\in \petit(\atomiques^{-0})$ is a q.s.d.
then its exponential decay rate $\theta(\nu)$ satisfies
\begin{equation}
\label{nan3} \theta(\nu)=\int\limits_{{\eta}\in {\atomiques}_1}
Q({\eta},0) \, \nu(d\, {\eta})= \int\limits_{\traits} Q(y,0)
d\nu(y)\,= \int\limits_{\traits} \lambda_y(y) d\nu(y)\,.
\end{equation}
\end{lemma}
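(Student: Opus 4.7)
The plan is to apply Proposition \ref{martingale}(i) to the time-dependent test function $f(s,\eta) = e^{\theta s}\,\one_{\atomiques^{-0}}(\eta)$, with $\theta := \theta(\nu)$. The zero configuration is an isolated point of $\atomiques$ in the Prohorov topology — a sequence of integer-valued point measures can converge weakly to $0$ only by being eventually $0$ — so $\one_{\atomiques^{-0}}$ is continuous. Since $f(\cdot,0)\equiv 0$ and both $f$ and $\partial_s f$ are uniformly bounded on every strip $[0,t_0]\times\atomiques$, Proposition \ref{martingale}(i) produces a genuine $(\F_t)$-martingale $\mathscr{M}^f$.

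Next I identify the generator applied to the indicator. Reading the three sums in \eqref{w_generator}, the only transition leaving $\atomiques^{-0}$ is the death of the lone individual in a singleton configuration, so
$$
L\,\one_{\atomiques^{-0}}(\eta) = -Q(\eta,0), \qquad Q(\eta,0) = \begin{cases} \lambda_y(\delta_y), & \eta = \delta_y, \\ 0, & \|\eta\|\ge 2. \end{cases}
$$
In particular $Q(\cdot,0)$ is supported on $\atomiques_1$ and bounded there (by continuity of $\lambda$ on the compact space $\traits$), hence $\nu$-integrable.

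The core step is to take expectations in $\EE_\nu[\mathscr{M}^f_t]=0$. The defining q.s.d. relation \eqref{nant5} together with \eqref{charqsd1} gives $\EE_\nu[g(Y_s)\one_{\atomiques^{-0}}(Y_s)] = e^{-\theta s}\int g\,d\nu$ for any bounded measurable $g$. Applying this with $g=1$ and $g=Q(\cdot,0)$, together with $\EE_\nu[f(t,Y_t)] = e^{\theta t}\PP_\nu(T_0>t) = 1$ and $\int f(0,\cdot)\,d\nu = 1$, Fubini's theorem yields
$$
0 \;=\; 1 - 1 - \int_0^t e^{\theta s}\Bigl(\theta e^{-\theta s} - e^{-\theta s}\!\!\int\! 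Q(\eta,0)\,d\nu(\eta)\Bigr)\,ds \;=\; -t\Bigl(\theta - \!\!\int\! Q(\eta,0)\,d\nu(\eta)\Bigr).
$$
Dividing by $t>0$ gives $\theta(\nu) = \int Q(\eta,0)\,d\nu(\eta)$, and the three expressions in \eqref{nan3} coincide because $Q(\cdot,0)$ is supported on $\atomiques_1$ with $Q(\delta_y,0) = \lambda_y(\delta_y)$ under the identification $y\leftrightarrow\delta_y$.

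The main delicate point is verifying that $\one_{\atomiques^{-0}}$ is an admissible test function for Proposition \ref{martingale}, which reduces to the topological remark that $\{0\}$ is an isolated point of $\atomiques$. Once this is in place, the proof is a pure algebraic collapse of the martingale identity via the exponential absorption law of the q.s.d.; no dominated convergence or limit interchange argument is required, which is essentially why this approach is preferable to differentiating the identity $\PP_\nu(T_0\le t) = 1-e^{-\theta t}$ by hand using \eqref{expI}.
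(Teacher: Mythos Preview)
Your argument is correct and considerably slicker than the paper's. The paper proceeds by a direct small-time analysis of $\PP_\eta(T_0\le t)$: it first shows that configurations with $\|\eta\|\ge 2$ satisfy $\PP_\eta(T_0\le t)=o(t)$ uniformly (by bounding the probability that two or more deaths occur before time $t$ via an explicit computation with exponential waiting times), and then expands $\PP_y(T_0\le t)$ for singletons to first order in $t$; plugging both into the limit \eqref{expI} yields \eqref{nan3}. Your approach bypasses this asymptotic bookkeeping entirely by encoding the absorption rate as $L\one_{\atomiques^{-0}}=-Q(\cdot,0)$ and letting the martingale identity of Proposition~\ref{martingale}(i) collapse the q.s.d. relation algebraically. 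The trade-off is that your route leans on the martingale machinery of Section~\ref{poissones}, whereas the paper's argument is self-contained at the level of jump-chain estimates; in return you avoid any $o(t)$ computations and the somewhat delicate uniform bound $a_2(t)=o(t)$. One small remark: Proposition~\ref{martingale}(i) only requires $f$ to be measurable (with $C^1$ time marginals), so your topological observation that $\{0\}$ is isolated, while correct, is not strictly needed---measurability of $\one_{\atomiques^{-0}}$ already suffices.
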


\begin{proof}
Since $0$ is absorbing we get that for all fixed
$\eta\in \atomiques^{-0}$ the absorption probability
$\PP_{\eta}(T_0 \le t)$ is increasing in time $t$.
Let us denote
$$
a_2(t)=\sup\{\PP_{\eta}(T_0\le t): \, \|\eta\|= 2 \}\,.
$$
Obviously we have $a_2(s)\le a_2(t)$ when $0\le s \le t$.
We claim that
\begin{equation}
\label{bound0}
\sup\{\PP_{\eta}(T_0\le t):\, \|\eta\|\ge 2 \}\le a_2(t)
\end{equation}
Indeed let ${\widehat T}_2=\inf\{t\ge 0: \|Y_t\|=2\}$. Since the
process will be a.s. extinct for all $\eta\in \atomiques$ with
$\|\eta\|
> 2$, we have $\PP_{\eta}({\widehat T}_2 < \infty)=1$. From the
Markov property and the monotonicity in time of $a_2(t)$ we get
that for all $\eta\in \atomiques$ with $\|\eta\|> 2$,
\begin{eqnarray*}
\PP_{\eta}(T_0\le t)&=&\sum\limits_{\xi: \|\xi\|=2}
\int_0^t \PP_{\eta}({\widehat T}_2=ds, Y_{{\widehat T}_2}=\xi)
\PP_{\xi}(T_0\le t-s)\\
&\le & a_2(t)\sum\limits_{\xi: \|\xi\|=2} \int_0^t
\PP_{\eta}({\widehat T}_2=ds, Y_{{\widehat T}_2}=\xi)=a_2(t)\,.
\end{eqnarray*}
Now let us show that $a_2(t)=o(t)$, that is $\lim\limits_{t\to
0^+} {a_2(t)}/{t}=0$.

 Let $\eta\in \atomiques_2$ be a fixed
initial configuration, thus $\|\eta\|=2$.  We denote by
$A_{\downarrow}$ the subset of trajectories such that the function
$(\|Y_t\|: t\le T_0)$ is decreasing, that is at all the jumps of
the trajectory, an individual dies.  Remark that, in the
complement set $A_{\downarrow}^c$  of $A_{\downarrow}$, either at
the first or at the second jump of the trajectory,  the number of
individuals increases. Therefore, from (\ref{bound0}), the Markov
property and the monotonicity in time of $\PP_{\eta}(T_0\le t,
A_{\downarrow})$, we get that
$$
\sup\{\PP_{\eta}(T_0\le t, A_{\downarrow}^c):\, \|\eta\|= 2\}\le
\sup\{\PP_{\eta}(T_0\le t, A_{\downarrow}):\, \|\eta\|= 2\}\,.
$$

Let us now denote by $\tau$ the time of the first jump of the
process $Y$,
and $y_1$, $y_2$ are the locations of the points in $\eta$
(they can be equal). We have
$$
\PP_{\eta}(T_0\le t, A_{\downarrow})\le
\PP_{\eta}(T_0\le t, A_{\downarrow}, Y_{\tau}=y_1)+
\PP_{\eta}(T_0\le t, A_{\downarrow}, Y_{\tau}=y_2)\,,
$$
and
$$
\PP_{\eta}(T_0\le t, A_{\downarrow}, Y_{\tau}=y_i)
= \PP({\bf
e}_{\eta}+{\bf e}_{y_i}\le t, \hbox{both events are
deaths})\!=\!\int_0^t \!\!\!f_i(s)ds\,,
$$
where ${\bf e}_{\eta}$ and ${\bf e}_{y_i}$ are two independent
random variables exponentially distributed
with parameters $Q(\eta)$ and $Q(y_i)$
respectively. Moreover, conditionally to the fact that the  two
jump events occur before time $t$, the probability to obtain two
death events is ${\lambda_{y_2}(\eta)\over Q(\eta)}\times
{\lambda_{y_1}(y_1)\over Q(y_1)}$. We have for $i=1$ (a similar
computation holds for $i=2$),
$$
f_1(s)=\int_0^s \lambda_{y_2}(\eta) e^{-Q(\eta)u}
\lambda_{y_1}(y_1)e^{-Q(y_1)(s-u)}du\le Q(y_1)(1-e^{-Q(\eta)s})\,.
$$
 By using the bounds in (\ref{locbo1}) we find,
$$
\sup\{\PP_{\eta}(T_0\!\le \!t, A_{\downarrow}): \|\eta\|\!=\!2\}
\!\le \! Q_+(1)\!
\int_0^t \!\! (1\!-\!e^{-Q_+(2) s})ds\!=\!Q_+(1) \, o(t).
$$
So $a_2(t)=o(t)$ holds and from (\ref{expI}) we obtain,
$$
\theta(\nu)=\lim\limits_{t\to 0^+}\frac{1}{t}\int\limits_\traits
\PP_y(T_0\le t)d\nu(y)\,.
$$
Similar arguments as those just developed allow to get
$\PP_y(T_0\le t)=\lambda_y(y)(1-e^{-Q(y)t})+  k\, a_2(t)$, where
$k$ is a positive constant. Then the result follows.
\end{proof}

\bigskip

\section{ Proof of the existence of q.s.d.}
\label{prin11}

In this section we give a proof of Theorem \ref{mmainn}.
The proof is based upon a more general
result, Theorem \ref{abstraitg}, which shows that for a class of
positive linear operators defined in some Banach spaces, whose
elements are real functions with domain in a Polish space, there
exist finite eigenmeasures. We show Theorem \ref{mmainn} in
Subsection \ref{finpreuve}. For this purpose we construct the
appropriate Banach spaces and the operator, in order that the
eigenmeasure given by Theorem \ref{abstraitg} is a q.s.d. of the
original problem.

\subsection{An abstract result}
\label{abst11}

In this paragraph, $(\X,d)$ is a Polish metric space.
We will denote by
$C_{b}(\X)$ the set of bounded
continuous functions on $\X$. This set becomes  Banach space when
equipped with the supremum norm.

Let $S$ be a bounded positive linear operator on  $C_{b}(\X)$.
We will also make the following hypothesis.

\noindent \textsl{Hypothesis} $\hypo$:
There exists a continuous function $\varphi_{2}$ on $(\X,d)$ such that
\begin{itemize}

\item[$\hypo_1$] $\;\;\;\;\varphi_{2}\ge 1$.

\item[$\hypo_2$] $\;\;\;$ For any $u\ge0$, the set $\varphi_{2}^{-1}([0,u])$ is
compact.
\end{itemize}

It follows from $\hypo_2$ that if $(\X,d)$ is not compact,
there is a sequence $(x_{j}: j\in \NN)$
in $\X$ such that $\lim_{j\to\infty}
\varphi_{2}\big(x_{j}\big)=\infty$.

\medskip

Before stating the main result of this section we state and prove a
lemma which will be useful later on.

\begin{lemma}\label{dual}
Let $v$ be a continuous nonnegative linear form on $C_{b}(\X)$. Assume
there is a positive number $K$ such that for any function $\psi\in
C_{b}(\X)$ satisfying  $0\le \psi\le \varphi_{2}$, we have
$$
v(\psi)\le K\;.
$$
Then there exists a positive measure $\nu$ on $\X$ such that for any
function $f\in C_{b}(\X)$
$$
v(f)=\int f\;d\nu\;.
$$
\end{lemma}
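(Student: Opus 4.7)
The plan is to realize $v$ as integration against a finite Borel measure via the Daniell--Stone representation theorem applied to the Stone vector lattice $C_b(\X)$. Since $(\X,d)$ is metric, the Baire $\sigma$-algebra generated by $C_b(\X)$ coincides with the Borel $\sigma$-algebra $\B(\X)$, so Daniell--Stone will directly furnish a Borel measure $\nu$ representing $v$. The measure will be finite because the constant function $1$ belongs to $C_b(\X)$ with $0\le 1\le\varphi_2$ by $\hypo_1$, hence $\nu(\X)=v(1)\le K$. The only nontrivial hypothesis to verify is $\sigma$-continuity at $0$: $v(f_k)\to 0$ whenever $f_k\in C_b(\X)$ decreases pointwise to $0$.

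The key input is a tightness estimate extracted from $\hypo$. For $n\in\NN$ set $K_n:=\varphi_2^{-1}([0,n])$, which is compact by $\hypo_2$. If $f\in C_b(\X)$ is nonnegative and vanishes on $K_n$, then pointwise $(n/\|f\|_\infty)\,f\le\varphi_2$: trivially on $K_n$, and on $K_n^c$ because there $\varphi_2/n>1\ge f/\|f\|_\infty$. The rescaled function lies in $C_b(\X)$, so the hypothesis gives
\begin{equation*}
v(f)\,\le\,\frac{K\,\|f\|_\infty}{n}.
\end{equation*}

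Now fix $n$ and introduce the continuous cutoff $\chi_n:=\max(0,\min(1,n+1-\varphi_2))$, which equals $1$ on $K_n$, vanishes outside $K_{n+1}$, and takes values in $[0,1]$. For a nonnegative sequence $f_k\downarrow 0$ in $C_b(\X)$, I split $v(f_k)=v(f_k\chi_n)+v(f_k(1-\chi_n))$. Since $f_k(1-\chi_n)\ge 0$ vanishes on $K_n$ with supremum at most $\|f_1\|_\infty$, the tightness bound yields $v(f_k(1-\chi_n))\le K\|f_1\|_\infty/n$. The function $f_k\chi_n$ is supported in the compact set $K_{n+1}$ with $\|f_k\chi_n\|_\infty\le\sup_{K_{n+1}}f_k$; by Dini's theorem on $K_{n+1}$ this supremum tends to $0$, so the norm continuity of $v$ gives $v(f_k\chi_n)\to 0$ as $k\to\infty$. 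Hence $\limsup_k v(f_k)\le K\|f_1\|_\infty/n$, and letting $n\to\infty$ produces the required $\sigma$-continuity.

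The main obstacle is the tightness step: the hypothesis controls $v$ only against functions pointwise dominated by $\varphi_2$, and since $\varphi_2$ may be unbounded one must rescale carefully to remain inside $C_b(\X)$ and exploit $\varphi_2>n$ on $K_n^c$. Once $\sigma$-continuity at $0$ is established, Daniell--Stone produces a unique finite Borel measure $\nu$ with $v(f)=\int f\,d\nu$ for nonnegative $f\in C_b(\X)$, and linearity (writing $f=f^+-f^-$) extends the identity to all of $C_b(\X)$.
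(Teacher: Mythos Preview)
Your argument is correct, but it follows a genuinely different route from the paper's. The paper proceeds by an explicit construction: it introduces a smooth cutoff $\varpi$ and defines truncated functionals $v_m(f)=v\big(\varpi(\varphi_2/m)\,f\big)$, each supported in the compact set $\varphi_2^{-1}([0,2m])$ and hence, by Riesz, given by a measure $\nu_m$. It then proves the family $(\nu_m)$ is tight via the bound $\nu_m(K_u^c)\le 2K/u$, extracts a weak accumulation point $\nu$, and finally shows $|v(f)-\nu_m(f)|\le K\|f\|/m$ for every $f\in C_b(\X)$, forcing $\nu(f)=v(f)$.

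You instead invoke Daniell--Stone and reduce everything to checking the monotone $\sigma$-continuity $f_k\downarrow 0\Rightarrow v(f_k)\to 0$. The core analytic input is the same in both proofs: a function that is small on the compact set $K_n=\varphi_2^{-1}([0,n])$ can be rescaled so as to lie below $\varphi_2$, yielding a tail bound of order $K/n$. The paper uses this to get tightness of $(\nu_m)$; you use it to kill the off-compact piece $v(f_k(1-\chi_n))$, while Dini handles the compactly supported piece. Your approach is shorter and more conceptual, delegating the measure construction to a standard representation theorem; the paper's approach is more self-contained and makes the approximating measures explicit, which some readers may prefer since it avoids citing Daniell--Stone. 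Both are valid and rest on the same estimate.
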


\begin{proof}
Let $C_{0}(\X)$ be the set of continuous functions
vanishing at infinity.
Let $\varpi$ be a real continuous non-increasing nonnegative
function on $\real^{+}$. Assume that $\varpi=1$ on the interval
$[0,1]$ and $\varpi(2)=0$ (hence $\varpi=0$ on $[2,\infty)$). For any
integer $m$, let $v_{m}$ be the continuous positive linear form defined
on $C_{0}(\X)$ by
$$
v_{m}(f)=v\big(\varpi(\varphi_{2}/m)\;f\big)\;.
$$
This linear form has support in the set $\varphi_{2}^{-1}([0,2m])$ in
the sense that it vanishes on those functions which vanish on this
set. Note also that $\varphi_{2}^{-1}([0,2m])$ is
compact  by hypothesis $\hypo_2$. Therefore it can be
identified with a nonnegative measure $\nu_{m}$ on $\X$,
 namely for any $f\in  C_{b}(\X)$ we have
$$
v_{m}(f)=\int f\;d\nu_{m}\;.
$$
We now prove that this sequence of measures is tight. Let $u>0$ and
define the set
$$
K_{u}=\varphi_{2}^{-1}([0,u]).
$$
Again, by hypothesis $\hypo_2$, for any $u>0$ this is a compact
set. We now observe that $\ \gun_{K_{u}^{c}}\le
1-\varpi(2\,\varphi_{2}/u)\;.
$
Therefore,
$$
\nu_{m}\big(K_{u}^{c}\big)\le
  \nu_{m}\big(1-\varpi(2\,\varphi_{2}/u)\big)=
v_{m}\big(1-\varpi(2\,\varphi_{2}/u)\big) $$ $$ =
v\big(\varpi(\varphi_{2}/m)\;
\big(1-\varpi(2\,\varphi_{2}/u)\big)\big)\;,
$$
We now use the fact that the function $ \
\varphi_{2}\varpi(\varphi_{2}/m)\;
\big(1-\varpi(2\,\varphi_{2}/u)\big) $ is in $ C_{b}(\X)$ and
satisfies
$$
\frac{u}{2} \varpi(\varphi_{2}/m)\;
\big(1-\varpi(2\,\varphi_{2}/u)\big) \le \varpi(\varphi_{2}/m)\;
\big(1-\varpi(2\,\varphi_{2}/u)\big)\varphi_{2}\le \varphi_{2}
$$
to obtain from the hypothesis of the lemma that
$$
v\big(\varpi(\varphi_{2}/m)\;
\big(1-\varpi(2\,\varphi_{2}/u)\big)\big)\le
\frac{2}{u} v\big(\varpi(\varphi_{2}/m)\;
\big(1-\varpi(2\,\varphi_{2}/u)\big)\varphi_{2}\big)\le
\frac{2K}{u}\;.
$$
In other words, for any $u>0$ we have for any integer $m$
$$
\nu_{m}\big(K_{u}^{c}\big)\le \frac{2K}{u}\;.
$$
The sequence of measures $\nu_{m}$ is therefore tight, and we denote
by $\nu$ an accumulation point which is a nonnegative measure on
$\X$. We now prove that for any $f\in C_{b}(\X) $ we have
$\nu(f)=v(f)$.
For this purpose, we write
$$
v(f)=v\big(\varpi(\varphi_{2}/m)\;f\big)
+v\big((1-\varpi(\varphi_{2}/m))\;f\big)\;.
$$
We now use the inequality
$$
\varphi_{2}\ge (1-\varpi(\varphi_{2}/m))\varphi_{2}\ge
m (1-\varpi(\varphi_{2}/m))\;,
$$
to conclude using the hypothesis of the lemma (since
$(1-\varpi(\varphi_{2}/m))\varphi_{2} \in C_{b}(\X)$) that
$$
\big|v\big((1-\varpi(\varphi_{2}/m))\;f\big)\big|
\le v\big((1-\varpi(\varphi_{2}/m))\;|f|\big)
\le \|f\|\;v\big(1-\varpi(\varphi_{2}/m)\big)
\le \frac{K}{m}\;.
$$
In other words, we have for any $f \in C_{b}(\X)$
$$
\big|v(f)-\nu_{m}(f)\big|\le  \frac{K}{m}\;.
$$
From the tightness bound, we have for any $f \in C_{b}(\X)$
$$
\lim_{m\to\infty} \nu_{m}(f)=\nu(f)\;,
$$
see for example \cite{billingsley}, and therefore $\nu(f)=v(f)$
which completes the proof of the lemma.
\end{proof}

We now state the general result.

\begin{theorem}
\label{abstraitg} Assume hypotheses $\hypo_1$ and $\hypo_2$, and
assume also that there exist three  constants $c_{1}>\gamma>0$ and
$D>0$ such that
$$
S(1)\ge c_{1}
$$
and for any $\psi\in C_{b}(\X)$ with $0\le \psi\le \varphi_{2}$
$$
S\psi\le \gamma \varphi_{2}+D\;.
$$
Then  there is a probability measure $\nu$ on $\X$ such that $\nu
\circ S=\beta\nu$, with $\beta=\nu(S(1))>0$.
\end{theorem}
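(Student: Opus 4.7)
The plan is to set up a Schauder--Tychonoff fixed-point argument for the normalized adjoint action on probability measures. For a probability $\nu$ on $\X$ define
$$
T\nu(f):=\frac{\nu(Sf)}{\nu(S\one)},\qquad f\in C_b(\X);
$$
the denominator is bounded below by $c_1>0$ by the hypothesis $S\one\ge c_1$, so $T$ is well-defined, and a fixed point $\nu=T\nu$ will automatically satisfy $\nu\circ S=\beta\nu$ with $\beta=\nu(S\one)\ge c_1>0$, which is the desired conclusion. Fix a constant $M\ge D/(c_1-\gamma)$ large enough that there exists $x_0\in\X$ with $\varphi_2(x_0)\le M$, and work with the convex set
$$
\Si_M:=\bigl\{\nu\in\petit(\X):\nu(\varphi_2)\le M\bigr\}.
$$

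First I would verify that $\Si_M$ is a non-empty convex narrowly compact subset of the locally convex space of finite signed Borel measures on $\X$. Non-emptiness follows from $\delta_{x_0}\in\Si_M$; convexity is immediate; tightness is obtained from Markov's inequality $\nu(\{\varphi_2>u\})\le M/u$ together with hypothesis $\hypo_2$, which guarantees that $\{\varphi_2\le u\}$ is compact; and narrow closedness follows from the Portmanteau characterization applied to the continuous non-negative function $\varphi_2$, giving $\nu(\varphi_2)\le\liminf_n\nu_n(\varphi_2)\le M$ whenever $\nu_n\to\nu$ narrowly in $\Si_M$. Prokhorov's theorem then supplies narrow compactness.

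Second I would show that $T$ is a continuous self-map of $\Si_M$. For any $\nu\in\Si_M$ and any $\psi\in C_b(\X)$ with $0\le\psi\le\varphi_2$ the hypothesis yields $\nu(S\psi)\le\gamma\,\nu(\varphi_2)+D\le\gamma M+D$, so the positive linear form $f\mapsto\nu(Sf)$ meets the hypothesis of Lemma~\ref{dual} with $K=\gamma M+D$ and is therefore represented by a finite positive Borel measure on $\X$. After normalization by $\nu(S\one)\ge c_1$ we obtain that $T\nu$ is a bona fide probability measure. Applying the bound to the truncations $\psi_n:=\min(\varphi_2,n)\in C_b(\X)$ and passing to the limit by monotone convergence then yields
$$
T\nu(\varphi_2)\le\frac{\gamma\,\nu(\varphi_2)+D}{\nu(S\one)}\le\frac{\gamma M+D}{c_1}\le M
$$
by the choice of $M$, so that $T\nu\in\Si_M$. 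Continuity is straightforward: if $\nu_n\to\nu$ narrowly in $\Si_M$, then for any $f\in C_b(\X)$ the functions $Sf$ and $S\one$ belong to $C_b(\X)$ (since $S$ maps $C_b(\X)$ into itself), hence $\nu_n(Sf)\to\nu(Sf)$ and $\nu_n(S\one)\to\nu(S\one)\ge c_1$, giving $T\nu_n(f)\to T\nu(f)$.

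The final step is to invoke the Schauder--Tychonoff fixed-point theorem for the continuous self-map $T$ on the compact convex set $\Si_M$, producing $\nu\in\Si_M$ with $\nu\circ S=\nu(S\one)\,\nu$, whence the theorem. The most delicate point is to reconcile the unbounded weight $\varphi_2$, which governs tightness and the invariance of $\Si_M$, with the fact that $S$ is only defined on bounded continuous functions: the truncation procedure combined with Lemma~\ref{dual} is exactly what bridges the pointwise inequality $S\psi\le\gamma\varphi_2+D$ on test functions $\psi\in C_b(\X)$ and the integrated inequality $T\nu(\varphi_2)\le M$, and simultaneously guarantees that the candidate $T\nu$ is a genuine Radon probability measure rather than a mere positive linear form.
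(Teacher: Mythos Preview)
Your proof is correct and follows essentially the same strategy as the paper: define the normalized adjoint map $T\nu=\nu\circ S/\nu(S\one)$, find an invariant compact convex set of probabilities cut out by a moment condition on $\varphi_2$, and apply the Tychonov fixed-point theorem. The one noteworthy variation is in the compactness argument: the paper works inside the dual $C_b(\X)^*$ with the weak* topology, takes $\convex_K=\{v\ge0,\ v(1)=1,\ \sup_{0\le\psi\le\varphi_2}v(\psi)\le K\}$, and gets compactness for free from Banach--Alaoglu (the set sits in the unit sphere and is weak* closed), invoking Lemma~\ref{dual} only to identify the fixed point as a genuine measure; you instead work from the outset with probability measures in the narrow topology, obtain tightness of $\Si_M$ from Markov's inequality and $\hypo_2$, and conclude compactness via Prokhorov. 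Both routes are sound; yours is slightly more probabilistic in flavor and makes the role of $\hypo_2$ (compact sublevel sets) more transparent, while the paper's avoids Prokhorov altogether at the cost of a more abstract ambient space.
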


\begin{proof}
In the dual space $ C_{b}(\X)^{*}$, we define for any real $K>0$
the convex set $\convex_{K}$ given by
$$
\convex_{K}=\left\{v\in \, C_{b}(\X)^{*}: \,v\ge 0,\;
v(1)=1,\; \sup_{\psi\in C_{b}(\X)\,,\,
0\le \psi\le \varphi_{2}
} v(\psi)\le K \right\}\;,
$$
Note that by Lemma \ref{dual}, the elements of $\convex_{K}$ are
positive measures.

 We observe that for any $K$ large enough the
set $\convex_{K}$ is non empty. It suffices to consider a Dirac
measure $\delta_{x}$ on a point $x\in \X$ and to take $K\geq
\varphi_{2}(x)$. Since for any $K\ge 0$, $\convex_{K}$ is an
intersection of weak* closed subsets, it is closed in the weak*
topology.

\medskip

We now introduce the non-linear operator $T$ having domain
$\convex_{K}$ and defined by
$$
T(v)=\frac{v\circ S}{v\big(S(1)\big)}\;.
$$
Note that since $S(1)>c_{1}>0$, we have
$v\big(S(1)\big)\ge c_{1}v(1)$ and this operator $T$
is well defined on $\convex_{K}$. We have obviously
$T(v)(1)=1$. We now prove that $T$ maps $\convex_{K}$ into itself.
Let $\psi\in C_{b}(\X)$ with $0\le \psi\le \varphi_{2}$.
Since
$$
S\psi\le \gamma\varphi_{2}+D
$$
and obviously
$$
0\le S\psi\le \gamma\frac{\|S\psi\|}{\gamma}
$$
we get
$$
0\le S\psi\le \gamma \big(\varphi_{2}\wedge (\|S\psi\|/\gamma)\big)+D\;.
$$
Therefore  since the function $\psi'=\varphi_{2}\wedge
(\|S\psi\|/\gamma)$ satisfies  $\psi'\in C_{b}(\X)$ and  $0\le
\psi'\le \varphi_{2}$. We conclude that for $v\in \convex_{K}$
$$
T(v)\big(\psi\big)\le  \frac{\gamma
  v(\psi')+D}{c_{1}}\;.
$$
From the bound $v(\psi')\le K$ we get
$$
T(v)\big(\psi\big)\le  \frac{\gamma
  v(\psi')+D}{c_{1}}
\le\frac{\gamma}{c_{1}}\,K+\frac{D}{c_{1}}
\le K
$$
if $K>D/(c_{1}-\gamma)$.
Therefore, for any $K$ large enough, the set
$\convex_{K}$ is non empty and mapped into itself by $T$.

\medskip

It is easy to show that $T$ is continuous on $\convex_{K}$ in the weak*
topology. This follows at once from the continuity of the operator
$S$.
We can now apply Tychonov's fixed point theorem (see \cite{tycho} or
\cite{ds}) to deduce that $T$ has a fixed point. This implies that
there is a point $\nu\in\convex_{K}$ such that $\nu\circ  S=
v(S(1))\nu$. This concludes the proof of the Theorem.
\end{proof}

\subsection{Construction of the function $\varphi_{2}$ and the proof
  of Theorem \ref{mmainn}.}
\label{finpreuve}

We assume that the hypotheses of Theorem \ref{mmainn} hold. In our
application we have a semi-group $P_{t}$ acting on
$C_b(\atomiques^{-0})$. We will use  Lemma \ref{semco11} to
construct a q.s.d.   This lemma is proved using Theorem
\ref{abstraitg}  applied to $S=P_{1}$:
$$
Sf(\eta)=P_{1}f(\eta)=\EE_{\eta}\big(f(Y_{1})\,,\,T_0>1\big)\,,\;\;
\eta\in \atomiques^{-0}\,.
$$
Here the Polish metric space $(\X,d)$ of the previous paragraph
will be $(\atomiques^{-0}, d_P)$, and so the function
$\varphi_{2}$ will have domain in the set of
nonempty configurations.

We recall the elementary formula valid for any continuous and bounded
function $f$ on $\atomiques$ and any $t\ge0$
$$
\EE_{\eta}\big(f(Y_{t})\big)=
\EE_{\eta}\big(f(Y_{t})\,,\,T_0>t\big)+f(0)\proba_{\eta}
\big(T_0\le t\big)\,.
$$

We start with the following bounds.
\begin{lemma}
\label{cotas}
Let
$\overline \lambda_1=\sup\limits_{\eta\, : \,\|\eta\|=1}\sup\limits_{y\in{\eta}}
\lambda_{y}(\eta)<\infty$, then
$$
-\overline \lambda_1 \le L \indiq \le 0\;.
$$
and  for all $\ t\ge 0$,
$$
e^{-\overline \lambda_1 t} \leq P_{t} \indiq \le \indiq\;.
$$
\end{lemma}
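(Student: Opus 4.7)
The proof splits naturally into two parts: the pointwise bounds on $L\indiq$, and then the semigroup bounds deduced from them via Dynkin's formula and a Gronwall-type argument.

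\textbf{Step 1: Bounds on $L\indiq$.} Here $\indiq$ stands for the indicator of $\atomiques^{-0}$, i.e.\ $\indiq(\eta)=1$ for $\eta\ne 0$ and $\indiq(0)=0$, so $\indiq(0)=0$ as required. Plugging $f=\indiq$ into formula \eqref{w_generator}, the birth and mutation increments $\indiq(\eta+\delta_y)-\indiq(\eta)$ and $\indiq(\eta+\delta_z)-\indiq(\eta)$ vanish for every $\eta\in\atomiques^{-0}$, since adding a particle preserves nonemptyness. The death increment $\indiq(\eta-\delta_y)-\indiq(\eta)$ is also $0$ whenever $\|\eta\|\ge 2$, and equals $-1$ exactly when $\|\eta\|=1$ and $\eta=\delta_y$. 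Therefore
$$
L\indiq(\eta)=\begin{cases} 0 & \hbox{if } \|\eta\|\ge 2,\\ -\lambda_y(\delta_y) & \hbox{if } \eta=\delta_y,\end{cases}
$$
and the definition of $\overline\lambda_1$ immediately yields $-\overline\lambda_1\le L\indiq\le 0$.

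\textbf{Step 2: Upper bound on $P_t\indiq$.} For $\eta\in\atomiques^{-0}$ one has $P_t\indiq(\eta)=\EE_\eta(\indiq(Y_t),T_0>t)=\PP_\eta(T_0>t)\le 1=\indiq(\eta)$, which is the right-hand inequality.

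\textbf{Step 3: Lower bound on $P_t\indiq$.} The function $\indiq$ is bounded, measurable, time-independent and vanishes at $0$, so Proposition \ref{martingale}(i) (applied with $f(s,\eta)=\indiq(\eta)$) gives that $\indiq(Y_t)-\int_0^t L\indiq(Y_s)\,ds$ is a martingale. Taking expectations and using \eqref{les2sm} together with $\indiq(0)=0$ yields Dynkin's identity
$$
P_t\indiq(\eta)=\EE_\eta(\indiq(Y_t))=1+\int_0^t \EE_\eta\bigl(L\indiq(Y_s)\bigr)\,ds\quad\text{for }\eta\in\atomiques^{-0}.
$$
Applying the pointwise bound $L\indiq\ge -\overline\lambda_1\indiq$ from Step 1 and $\indiq(Y_s)\le 1$,
$$
P_t\indiq(\eta)\ge 1-\overline\lambda_1\int_0^t P_s\indiq(\eta)\,ds.
$$
Setting $u(t)=P_t\indiq(\eta)$, the function $u$ is absolutely continuous in $t$ with $u(0)=1$ and satisfies $u'(t)\ge -\overline\lambda_1 u(t)$ almost everywhere; equivalently $(e^{\overline\lambda_1 t}u(t))'\ge 0$. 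Hence $e^{\overline\lambda_1 t}P_t\indiq(\eta)\ge 1$, i.e.\ $P_t\indiq(\eta)\ge e^{-\overline\lambda_1 t}$, which is the remaining inequality.

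The only potentially delicate point is the application of Proposition \ref{martingale} to the discontinuous function $\indiq$, but inspection of that statement shows that only measurability and boundedness of $f$ (and $\partial_s f\equiv 0$ here) are required, so no approximation is needed. Everything else is routine.
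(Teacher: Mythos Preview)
Your proof is correct and follows essentially the same route as the paper, which simply says the result ``follows at once from Proposition \ref{martingale} and a computation of $L\indiq_{\atomiques^{-0}}$''; you have written out in full the computation of $L\indiq$ and the Gronwall-type argument that the paper leaves implicit. One small remark: your closing caveat about $\indiq$ being discontinuous is unnecessary, since $\{0\}$ is an isolated point of $\atomiques$ in the Prohorov topology (any $\eta_n\to 0$ weakly has $\|\eta_n\|\to 0$, hence $\eta_n=0$ eventually), so $\indiq_{\atomiques^{-0}}$ is in fact continuous.
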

\begin{proof}
The proof follows at once from Lemma \ref{martingale} and a computation
of $L \indiq_{\atomiques^{-0}}$ (see formula (\ref{w_generator})).
\end{proof}

\begin{lemma}
\label{cotas2}
Consider for any $a>0$ the function
$\varphi_{2}^{a}\big(\eta\big)=e^{a\|\eta\|}\indiq_{\atomiques^{-0}}(\eta)$.
Then
$$
L \varphi^{a}_{2}(\eta)\le \left( \tn\, \left(e^{a}-1\right) +
\tm\, \left(e^{-a}-1\right)\right) \,\|\eta\|\,\varphi^{a}_{2}(\eta).
$$
\end{lemma}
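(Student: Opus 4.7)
\medskip

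The plan is to apply the weak generator formula \eqref{w_generator} directly to $f = \varphi_{2}^{a}$, exploit the multiplicative structure $f(\eta \pm \delta_{x}) = e^{\pm a} f(\eta)$ away from the boundary, and then use the uniform bounds $b_{y}(\eta) + m_{y}(\eta) \le \tn$ and $\lambda_{y}(\eta) \ge \tm$ to collect terms.

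More precisely, I would first handle the two birth contributions. For $\eta \in \atomiques^{-0}$ and any $y \in \{\eta\}$ or $z \in \traits$, the configuration $\eta + \delta_{y}$ (respectively $\eta + \delta_{z}$) is still nonempty, so $\varphi_{2}^{a}(\eta + \delta_{\cdot}) - \varphi_{2}^{a}(\eta) = (e^{a} - 1)\varphi_{2}^{a}(\eta)$. Integrating $g_{y}(z)\,d\sigma(z)$ against the constant $(e^{a}-1)\varphi_{2}^{a}(\eta)$ yields $(e^{a}-1)\varphi_{2}^{a}(\eta)$ by \eqref{hypac}, so the clonal and mutation parts combine to
$$
(e^{a}-1)\,\varphi_{2}^{a}(\eta) \sum_{y\in\{\eta\}} \eta_{y}\bigl(b_{y}(\eta)+m_{y}(\eta)\bigr) \;\le\; \tn\,(e^{a}-1)\,\|\eta\|\,\varphi_{2}^{a}(\eta),
$$
using assumption \eqref{co333} and the fact that $e^{a}-1 > 0$.

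Next I would treat the death contribution, which is the only subtle point. For $y \in \{\eta\}$ with $\|\eta\| \ge 2$ (or $\eta_{y} \ge 2$) we have $\eta - \delta_{y} \in \atomiques^{-0}$ and $\varphi_{2}^{a}(\eta - \delta_{y}) - \varphi_{2}^{a}(\eta) = (e^{-a}-1)\varphi_{2}^{a}(\eta)$. The exceptional case is $\|\eta\| = 1$, where $\eta - \delta_{y} = 0$ and the increment is $-e^{a}$ rather than $(e^{-a}-1)e^{a} = 1 - e^{a}$. The key observation is the inequality $-e^{a} \le 1 - e^{a}$, so the true increment is \emph{smaller} than the generic formula; hence replacing it by $(e^{-a}-1)\varphi_{2}^{a}(\eta)$ still produces an upper bound for $L\varphi_{2}^{a}$. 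After this substitution, and since $e^{-a}-1 < 0$, the bound $\lambda_{y}(\eta) \ge \tm$ from \eqref{co111} gives
$$
\sum_{y\in\{\eta\}} \eta_{y}\,\lambda_{y}(\eta)\,(e^{-a}-1)\,\varphi_{2}^{a}(\eta) \;\le\; \tm\,(e^{-a}-1)\,\|\eta\|\,\varphi_{2}^{a}(\eta).
$$

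Summing the two estimates yields the claimed inequality. The only non-routine point is the boundary correction in the death term, but as shown above it goes in the favorable direction, so no further work is needed.
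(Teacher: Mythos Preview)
Your proof is correct and follows essentially the same route as the paper: apply the generator formula \eqref{w_generator}, use the multiplicative structure to get $(e^{\pm a}-1)\varphi_{2}^{a}(\eta)$ increments, observe that the boundary correction at $\|\eta\|=1$ is negative (the paper records it explicitly as $-\lambda_{y}(\eta)\gun_{\|\eta\|=1}$ and then drops it), and bound the rates by $\tn$ and $\tm$ with the appropriate signs. There is no substantive difference.
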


\medskip

\begin{proof}
We compute $L \varphi^{a}_{2}(\eta)$ using
\eqref{w_generator}. For $\eta \in \atomiques^{-0}$ we have
\begin{equation}
\label{eqa}
\begin{array}{ll}
L\varphi^{a}_{2}(\eta) &=\sum\limits_{y\in \{
\eta\}}\eta_y \big(b_{y}(\eta)+m_{y}(\eta)\big)
\left(e^{a}-1\right) e^{a\|\eta\|}\\
&+\sum\limits_{y\in \{ \eta\}}\eta_y
 \lambda_{y}(\eta)\left(e^{-a}-1\right) e^{a\|\eta\|}
- \lambda_{y}(\eta)\gun_{\|\eta\|=1}\\
&\le \tn\,\|\eta\|\,\varphi^{a}_{2}(\eta) \left(e^{a}-1\right) +
\tm\,\|\eta\|\, \varphi^{a}_{2}(\eta) \left(e^{-a}-1\right)\,.
\end{array}
\end{equation}
\end{proof}

To define the function $\varphi_2$, we will need the two following
results.

\begin{lemma}
\label{edo}
The differential equation
\begin{equation}\label{laedo}
\frac{da}{dt}=\tm\left(1-e^{-a}\right)+\tn\left(1-e^{a}\right)
\end{equation}
has two fixed points $a=0$ and $a=\log(\tm/\tn)$. The trajectory of any
initial condition $a_{0}\in (0,\log(\tm/\tn))$ is increasing in time
and converges to  $\log(\tm/\tn)$.
\end{lemma}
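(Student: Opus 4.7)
The plan is to factor the right-hand side so that the two fixed points become transparent, then use a standard monotone–bounded argument for the trajectory.

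First I would set $F(a)=\tm(1-e^{-a})+\tn(1-e^{a})$ and rewrite it as
$$
F(a)=\tm\,\frac{e^{a}-1}{e^{a}}-\tn(e^{a}-1)=\frac{e^{a}-1}{e^{a}}\bigl(\tm-\tn\,e^{a}\bigr).
$$
This factorization makes the claim about fixed points immediate: $F(a)=0$ iff $e^{a}=1$ or $e^{a}=\tm/\tn$, i.e.\ $a=0$ or $a=\log(\tm/\tn)$, and by hypothesis \eqref{assumpH} the quantity $\log(\tm/\tn)$ is strictly positive, so the two fixed points are distinct.

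Next I would read off the sign of $F$ on the interval of interest. Both factors $(e^{a}-1)/e^{a}$ and $\tm-\tn e^{a}$ are strictly positive on $(0,\log(\tm/\tn))$, so $F(a)>0$ there. Hence any solution $a(t)$ of \eqref{laedo} starting at $a_{0}\in(0,\log(\tm/\tn))$ has $a'(t)>0$ as long as it stays in this interval, so $a(\cdot)$ is strictly increasing. Because $F$ is $C^{1}$, solutions are unique, so the trajectory $a(t)$ cannot cross either of the equilibria $0$ and $\log(\tm/\tn)$; in particular it stays in $(a_{0},\log(\tm/\tn))$ for all $t\ge 0$.

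Finally I would invoke the classical fact that a bounded monotone trajectory of an autonomous scalar ODE with continuous right-hand side converges, and its limit must be a zero of that right-hand side. Since $a(t)$ is increasing and bounded above by $\log(\tm/\tn)$, the limit $\ell:=\lim_{t\to\infty}a(t)$ exists and satisfies $F(\ell)=0$ with $\ell\in(a_{0},\log(\tm/\tn)]$; the only such zero is $\ell=\log(\tm/\tn)$. There is no real obstacle here: once the factorization of $F$ is in hand, the whole statement follows from elementary one-dimensional ODE considerations.
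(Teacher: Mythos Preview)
Your proof is correct. The paper itself omits the argument entirely (``Left to the reader''), so there is nothing to compare; your factorization $F(a)=\dfrac{e^{a}-1}{e^{a}}\bigl(\tm-\tn e^{a}\bigr)$ and the ensuing monotone--bounded reasoning are exactly the kind of elementary verification the authors had in mind.
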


\begin{proof}
Left to the reader.
\end{proof}

\begin{lemma}
\label{cotas3} Assume \eqref{assumpH}. Let $a(t)$ be the solution
of (\ref{laedo}) with  initial condition $a_0\in
(0,\log(\tm/\tn))$. Then \label{uniftime} $$\sup_{t\in
\mathbb{R}_+} \mathbb{E}_{\eta}(e^{-\lambda_* t} e^{a(t)\|Y_t\|},
T_0> t) \leq e^{a_0\|\eta\|} \,.
$$
\end{lemma}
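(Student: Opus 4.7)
The plan is to apply Proposition \ref{martingale}(iii) to the time-dependent function
$$
f(t,\eta)=e^{-\lambda_{*}t}\,e^{a(t)\|\eta\|}\,\gun_{\atomiques^{-0}}(\eta),
$$
where $a(t)$ is the solution of the ODE \eqref{laedo} with initial condition $a_{0}$. Lemma \ref{edo} guarantees that $a(\cdot)$ is $C^{1}$ and stays in $(0,\log(\tm/\tn))$ for all $t\ge 0$, so $f$ is continuous on $\RR_{+}\times\atomiques$ (the void configuration $0$ is isolated in $(\atomiques,d_{P})$ because total masses are integers, hence the indicator introduces no discontinuity) and locally bounded.

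The crucial computation is that $\partial_{t}f+Lf\le -\lambda_{*}f\le 0$ on $\RR_{+}\times\atomiques^{-0}$. Indeed, for $\eta\in\atomiques^{-0}$,
$$
\partial_{t}f(t,\eta)=e^{-\lambda_{*}t}e^{a(t)\|\eta\|}\bigl(-\lambda_{*}+a'(t)\|\eta\|\bigr),
$$
while Lemma \ref{cotas2} (applied with $a=a(t)$) gives
$$
Lf(t,\eta)\le e^{-\lambda_{*}t}\bigl(\tn(e^{a(t)}-1)+\tm(e^{-a(t)}-1)\bigr)\|\eta\|\,e^{a(t)\|\eta\|}.
$$
Equation \eqref{laedo} reads exactly $a'(t)=-\tn(e^{a(t)}-1)-\tm(e^{-a(t)}-1)$, so the $\|\eta\|$-dependent terms cancel and only $-\lambda_{*}f$ remains. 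This algebraic cancellation is the whole point of choosing $a(t)$ as the solution of \eqref{laedo}.

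Next, I would localize. For $T_{N}=\inf\{t>0:\|Y_{t}\|\ge N\}$, Proposition \ref{martingale}(iii) gives that $\mathscr{M}^{f}_{\cdot\wedge T_{N}}$ is a genuine martingale, hence
$$
\EE_{\eta}\bigl[f(t\wedge T_{N},Y_{t\wedge T_{N}})\bigr]=f(0,\eta)+\EE_{\eta}\!\int_{0}^{t\wedge T_{N}}\!\!\!(\partial_{s}f+Lf)(s,Y_{s})\,ds\le e^{a_{0}\|\eta\|},
$$
where the inequality uses the generator bound (the integrand is $0$ on $\{Y_{s}=0\}$ by the indicator, and nonpositive elsewhere). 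By Proposition \ref{existence} and \eqref{decatiempo}, $T_{N}\to\infty$ a.s.\ under $\PP_{\eta}$, so $f(t\wedge T_{N},Y_{t\wedge T_{N}})\to f(t,Y_{t})$ a.s. Fatou's lemma (applicable since $f\ge 0$) then yields $\EE_{\eta}[f(t,Y_{t})]\le e^{a_{0}\|\eta\|}$, and since $\{Y_{t}\in\atomiques^{-0}\}=\{T_{0}>t\}$ this is exactly the claimed bound. Taking the supremum over $t$ concludes.

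The main obstacle is that $f$ grows exponentially in $\|\eta\|$, so neither (i) nor (ii) of Proposition \ref{martingale} applies directly; one must pass through the local-martingale statement (iii) and use nonnegativity of $f$ to invoke Fatou. This works precisely because the differential inequality $\partial_{t}f+Lf\le 0$ is one-sided and requires no second-moment control on $f$ beyond the a.s.\ non-explosion $T_{N}\to\infty$ already provided by \eqref{co333}.
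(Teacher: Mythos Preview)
Your proof is correct and follows essentially the same approach as the paper: define $f(t,\eta)=e^{-\lambda_*t}e^{a(t)\|\eta\|}\gun_{\atomiques^{-0}}(\eta)$, use Lemma~\ref{cotas2} together with the ODE \eqref{laedo} to obtain $\partial_t f+Lf\le 0$, localize via $T_N$, and pass to the limit. The only cosmetic difference is that the paper introduces an additional spatial cut-off $f^N=f\gun_{\|\eta\|\le N}$ before localizing at $T_M$ and sends $N\to\infty$ first, whereas you apply Proposition~\ref{martingale}(iii) directly to $f$ and invoke Fatou in place of monotone convergence; your route is slightly more economical but the substance is identical.
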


\begin{proof}

We introduce the function
$$
f(t,\eta)=e^{-\tm t}e^{a(t)\| \eta \|}\indiq_{\atomiques^{-0}}(\eta)\;,
$$
and for any integer $N$ we denote by $f^N$ the function
$$
f^N(t,\eta)=f(t,\eta)\gun_{\| \eta \|\le N}\;.
$$
Note that $f^N(t,\eta)$ is continuous with compact support
$\{\eta : \|\eta\|\leq N\}$.

Using Proposition \ref{martingale} $(iii)$ we get
$$
f^N\big(t,Y_{t\wedge T_M}\big)=f^N\big(0,Y_0\big) +\int_0^{t\wedge
T_M}\big(\partial_sf^N(s,Y_s)+
Lf^N(s,Y_s)\big)ds+\mathscr{M}^{f^N}_{t\wedge T_M}\;,
$$
where $\mathscr{M}^{f^N}$ is a martingale. Then we obtain
$$
\EE_{\eta}\left(f^N\big(t,Y_{t\wedge
T_M}\big)\right)=f^N\big(0,Y_0\big)
+\EE_{\eta}\left(\int_0^{t\wedge T_M}\big(\partial_sf^N(s,Y_s)+
Lf^N(s,Y_s)\big)ds\right)\;.
$$
Observe that if $N>M$ and $s\le T_M$ we have
$f^N(s,Y_s)=f(s,Y_s)$. Let $N$ tend to infinity to get
$$
\EE_{\eta}\left(f\big(t,Y_{t\wedge
T_M}\big)\right)=f\big(0,Y_0\big) +\EE_{\eta}\left(\int_0^{t\wedge
T_M}\big(\partial_sf(s,Y_s)+ Lf(s,Y_s)\big)ds\right)\;.
$$
Using Lemma \ref{cotas2} we have
\begin{eqnarray*}
&{}& \partial_sf(s,Y_s)+ Lf(s,Y_s)=\\
&{}& e^{-\tm s}\left(
\left(\tm\left(1\!-\!e^{-a(s)}\right)\!+\!\tn\left(1\!-\!e^{a(s)}\right)
\right)\,\|Y_s\|\!-\!\tm\right)
\varphi_2^{a(s)}(Y_s)\!+\!L\varphi_2^{a(s)}(Y_s) \le 0\,.
\end{eqnarray*}
Therefore
$$
\EE_{\eta}\left(f\big(t,Y_{t\wedge T_M}\big)\right)\le
f\big(0,Y_0\big)\;.
$$
Letting $M$ tend to infinity and by using the Monotone Convergence
Theorem we obtain,
$$
\EE_{\eta}\left(f\big(t,Y_{t}\big)\right)\le f\big(0,Y_0\big)\;.
$$
The result follows from the definition of $f$
\end{proof}

\bigskip

 We take as function $\varphi_{2}$ the function
$$\varphi_{2}=\varphi_{2}^{a(1)},$$
 for $a$  solution of
(\ref{laedo}) with  initial condition $a_0\in (0,\log(\tm/\tn))$.
The operator $S$ is given by $S=P_{1}$, and hence is positive and maps
 continuously $C_{b}(\atomiques^{-0})$ into itself.
\medskip

We must now show that $S=P_1$, and  $\varphi_{2}$ satisfy
the hypothesis of Theorem \ref{abstraitg}.

\begin{lemma} $ $

\begin{itemize}
\item[(i)] The  hypotheses $\hypo_1$ and $\hypo_2$ are satisfied.

\item[(ii)] $S(1)>c_{1}>0$, with $c_{1}=e^{-\overline\lambda_1}$.

\item[(iii)]For any $\gamma>0$, there is a constant $D=D(\gamma)>0$ such that
for any $\psi\in C_{b}(\X)$ with $0\le \psi\le \varphi_{2}$
$$
S\psi\le \gamma \varphi_{2}+ D \;.
$$
\end{itemize}
\end{lemma}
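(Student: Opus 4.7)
The plan is to verify (i)--(iii) by combining the exponential moment control from Lemma \ref{cotas3} with the one-particle estimate of Lemma \ref{cotas}, while exploiting the strict inequality $a(1) > a_0$ guaranteed by Lemma \ref{edo}. The key structural observation is that, by construction, the exponent in $\varphi_2$ is $a(1)$, strictly larger than the starting value $a_0$ that appears on the right-hand side of the Lyapunov bound of Lemma \ref{cotas3}; this strict gap is exactly what allows contractivity in (iii) with an arbitrarily small $\gamma$.

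For (i), continuity of $\varphi_2 = e^{a(1)\|\cdot\|}$ on $\atomiques^{-0}$ follows because $\eta \mapsto \|\eta\| = \langle \eta, \mathbf{1}\rangle$ is continuous in the weak topology on measures on the compact space $\traits$. Since $a(1) > 0$ by Lemma \ref{edo} and $\|\eta\| \geq 1$ on $\atomiques^{-0}$, $\varphi_2 \geq e^{a(1)} > 1$, giving $\hypo_1$. For $\hypo_2$, the sublevel set $\varphi_2^{-1}([0,u]) = \{\eta \in \atomiques^{-0}: \|\eta\| \leq (\log u)/a(1)\}$ is tight (all measures are supported on the compact $\traits$ with uniformly bounded total mass), hence weakly relatively compact by Prohorov's theorem; it is closed because total mass is a weakly continuous functional, and it is contained in $\atomiques$, which the paper recalls is closed in the set of finite positive measures. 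Hence the sublevel set is compact. Item (ii) is immediate from Lemma \ref{cotas}: $S(\mathbf{1}) = P_1 \mathbf{1} \geq e^{-\overline\lambda_1} =: c_1 > 0$.

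Item (iii) is the substantive step. For $0 \leq \psi \leq \varphi_2$, positivity of $S$ and then Lemma \ref{cotas3} applied at $t=1$ give
$$S\psi(\eta) \leq \EE_\eta\bigl(e^{a(1)\|Y_1\|}, T_0 > 1\bigr) \leq e^{\tm}\, e^{a_0\|\eta\|}.$$
Given $\gamma > 0$, the fact that $a_0 < a(1)$ implies that the function $n \mapsto e^{\tm} e^{a_0 n} - \gamma e^{a(1) n}$ is bounded above on $\NN$: choosing $N_0 = N_0(\gamma)$ as the least integer with $\gamma e^{(a(1) - a_0)N_0} \geq e^{\tm}$ and setting $D(\gamma) = e^{\tm} e^{a_0 N_0}$ yields
$$e^{\tm}\, e^{a_0 n} \leq \gamma e^{a(1) n} + D(\gamma), \quad n \in \NN.$$
Evaluating at $n = \|\eta\|$ gives $S\psi \leq \gamma \varphi_2 + D$, as required. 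The main subtlety is really concentrated in this particular choice of $\varphi_2$: a static Lyapunov weight of the form $e^{a\|\cdot\|}$ with a fixed $a$ would only produce $S\varphi_2 \leq C \varphi_2$ with a constant $C$ of order $e^{\tm}$, which is insufficient for the hypothesis of Theorem \ref{abstraitg} with small $\gamma$. The flow of (\ref{laedo}) strictly advances the exponent from $a_0$ to $a(1)$ over one unit of time, producing the essential gap $a(1) - a_0 > 0$ that is traded against an additive constant $D$ and enables $\gamma$ to be chosen as small as desired, in particular below $c_1$.
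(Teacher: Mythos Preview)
Your proof is correct and follows essentially the same approach as the paper: part (ii) is deduced from Lemma \ref{cotas}, and for part (iii) you apply Lemma \ref{cotas3} at $t=1$ to obtain $S\psi(\eta)\le e^{\tm}e^{a_0\|\eta\|}$ and then exploit the strict gap $a(1)>a_0$ from Lemma \ref{edo} to absorb the bound into $\gamma\varphi_2+D$, exactly as the paper does. Your treatment of (i) is more explicit than the paper's (which simply appeals to the Feller property), but the substance is the same.
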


\begin{proof}
The hypotheses $\hypo_1$ and $\hypo_2$ are easy to check using the
Feller property of $P_{1}$ (see Proposition \ref{feller}).

\medskip

$(ii)$ follows at once from Lemma \ref{cotas}. We now prove $(iii)$.

Let  $\psi\in C_{b}(\X)$ with $0\le \psi\le \varphi_{2}$. We have from
Lemma \ref{cotas3}
$$
P_{1}\psi(\eta)=\mathbb{E}_{\eta}\big(\psi(Y_{1})\,,\,T_{0}>1\big)
 \le \EE_{\eta}(\varphi_{2}(Y_{1}), T_0>1)\le
e^{\lambda_*}\;e^{a_0\|\eta\|}\;.
$$
Since $a(1)>a_0 $ by Lemma \ref{edo}, for
any $\gamma>0$ there is an integer $m_{\gamma}$ such that for any
$m\ge m_{\gamma}$ we have
$$
e^{\lambda_*}\;e^{a_0\,m}\le \gamma  e^{a(1)m}\;.
$$
Therefore, for any $\eta$ we have
$$
P_{1}\psi(\eta)\le e^{\lambda_*}\;e^{a_0\|\eta\|}\le
\gamma e^{a(1)\,\|\eta\|}+
e^{\lambda_*}\;e^{a_0\,m_{\gamma}}\;.
$$
In other words, we have proved $(iii)$ with the constant
$D=e^{\lambda_*}\;e^{a_0\,m_{\gamma}}$.
\end{proof}

Theorem \ref{mmainn} follows immediately from the previous Lemma and
Theorem \ref{abstraitg}.

\section{ The process and absolute continuity }
\label{sec4}

In this section we introduce a natural $\sigma-$finite measure $\mu$.
We will show that the process $Y$ preserves the absolutely continuity
with respect to $\mu$ and that
when the process starts from any point measure after any positive time the
absolutely continuous part of the marginal distribution
does not vanish.

\subsection{The measures}

We will denote by $\widehat {\traits^{k}}$ the set of all
$k-$tuples in $\traits$ ordered by $\preceq$ defined in Subsection
\ref{sec2}. So, for $\eta\in \atomiques$, its ordered support
$\vec{\eta}=(y_1,...,y_{\#\eta})$ belongs to $\widehat
{\traits^{\#\eta}}$. The discrete structure $\overline \eta$ is an
element in $\NN^{\# \eta}$ and the set of all discrete structures
is denoted by
$$
\Sigma(\NN)=\bigcup_{n\in \ZZ_+} \NN^n\,.
$$
Here $\NN^0$ contains a unique element denoted by $0$ and it is
the discrete structure of the void configuration $\eta=0$. A
generic element of $\Sigma(\NN)$ will be denoted by $\vec q$.
Moreover for each ${\vec q}\in \Sigma(\NN)$ we put $\#{\vec q}=k$
if ${\vec q}\in \NN^k$. We put
$$
{\atomiques}_{\vec q}=\{\eta\in \atomiques: {\overline \eta}={\vec
q}\} \ \hbox{ for } {\vec q}\in \Sigma(\NN)\,,
$$
and for $B \subseteq \atomiques$,
$$
B_{\vec q}=\{\eta\in B:
{\overline \eta}={\vec q}\} \ \hbox{ for } {\vec q}\in
\Sigma(\NN)\,.
$$
In the sequel for $\vec q\in \NN^{k}$ and
$C\subseteq \widehat {\traits^k}$ we denote
\begin{equation}
\label{prod333} \{\vec q\}\times C:= \{\eta\in \atomiques:
{\overline \eta}=\vec q, \vec{\eta}\in C\}\,.
\end{equation}

We denote by $\borel_f({\atomiques})$ the set of measures on
$({\atomiques}, \B({\atomiques}))$ that give finite weight to all
sets ${\atomiques}_k$. By $\borel_f(\Sigma(\NN))$
we mean the set of measures on $\Sigma(\NN)$
giving finite weight to all the
subsets $\NN^k$, and $\borel_f(\NN))$ denotes the measures on
$\NN$ giving finite weight to all its points. Every measure
$v\in \borel_f({\atomiques})$
defines a measure ${\overline v}\in \borel_f(\Sigma(\NN))$ by
\begin{equation}
\label{igualdad1}
{\overline v}(\vec q)=v({\atomiques}_{\vec q})\,.
\end{equation}
Also $v$ defines a set of conditional measures
$v_{\vec q}\in \borel(\widehat {\traits^{\#{\vec q}}})$ by
$$
v_{\vec q}(\bullet)=\begin{cases}
0 &\hbox{if } {\overline v}(\vec q)=0,\\
v(\eta\in \atomiques: {\overline \eta}={\vec q}, \, \vec{\eta}\in
\bullet) /{\overline v}(\vec q)   &\hbox{otherwise} \,.
\end{cases}
$$
Then $v_{\vec q}\in \petit(\widehat {\traits^{\#{\vec q}}})$ is a
probability measure if ${\overline v}(\vec q)>0$.
In the case that $v\in \petit({\atomiques})$ we have
\begin{equation}
\label{chprob2}
v_{\vec q}(\bullet)=
v\left(\vec {\eta}\in \bullet \, | \, {\overline \eta}={\vec q}\right)
\,.
\end{equation}

Conversely a probability measure $v\in \petit(\atomiques)$ is
given by a probability measure ${\overline v}\in
\petit(\Sigma(\NN))$  and the family of conditional measures
$\left(v_{\overline  \eta}\in \petit(\atomiques_{\overline
\eta})\right)$ so that
$$
v(B)=\sum\limits_{{\vec q}\in \Sigma(\NN)} {\overline v}({\vec q})
\; v_{\vec q}(B_{\vec q}) ,\;\, B\in {\B}({\atomiques^{-0}})\,.
$$
In this sense
\begin{equation}
\label{difflun}
dv(\eta)={\overline v}({\overline \eta})
dv_{\overline \eta}(\eta).
\end{equation}
Let $\varphi:\atomiques\to \RR$ be a function. Observe that its
restriction to ${\atomiques}_{\vec q}$ can be identified with a
function $\varphi \big|_{{\atomiques}_{\vec q}}$ with domain in
$\widehat {\traits^{\#{\vec q}}}$ by the formula $\varphi
\big|_{{\atomiques}_{\vec q}}(\etat)=\varphi(\eta)$. Let
$\varphi:\atomiques\to \RR$ be a $v-$integrable function, we have
$$
\int_{\atomiques} \varphi(\eta)\, dv(\eta)= \sum\limits_ {\vec
q\in \Sigma(\NN)} {\overline v}({\vec q})\, \int_{\widehat
{\traits^{\#{\vec q}}}} \varphi \big|_{{\atomiques}_{\vec q}}(\vec
y) \, dv_{\vec q} (\vec y).
$$

Now we define the measure $\mu$ by,
\begin{equation}
\label{muuuu}
\mu({\entiers}^0\times \traits^0)=\mu(\{0\})=1 \hbox{ and }
\mu|_{{\entiers}^k\times \widehat {\traits^k}}=\ell^k\times
\widehat {\sigma^k}
\hbox{ for }k\ge 1 \,,
\end{equation}
where $\ell^k$ is the point measure on ${\entiers}^k$ that gives a
unit mass to every point, and $\widehat {\sigma^k}$ is the
restriction to $\widehat {\traits^k}$ of product measure
$\sigma^{\otimes k}$. Note that $v\in {\borel_f({\atomiques})}$
satisfies
$$
v\ <<\  \mu \ \Leftrightarrow \ \left( \, \forall\, {\vec q}\in
\Sigma(\NN): v_{{\vec q}}<<\widehat{\sigma^{\# {\vec q}}} \,
\right)\,.
$$
Hence, if $v\in \Si({\atomiques})$ is such that $v<< \mu$, then
$v$ is of the form
\begin{equation}
\label{lundi1} v\left(\eta\in \atomiques: {\overline \eta}={\vec
q}, \vec {\eta} \in d\,{\vec y}\right)= {\overline v}({\vec q}) \,
\varphi_{\vec q} (\vec y)\; d\widehat{\sigma^{\# {\vec q}}} ({\vec
y})\,.
\end{equation}
where ${\overline v}\in \petit(\Sigma(\NN))$ and for each fixed
${\vec q}\in \Sigma(\NN)$, $\varphi_{\vec q}({\vec \bullet})$ is a
density function in $\widehat {\traits^{\#{\vec q}}}$ with respect
to $\widehat{\sigma^{\# {\vec q}}}$.

\subsection{ Absolutely continuity is preserved }
\label{subsub41}

\begin{proposition}
\label{propo1}
The process $Y$ preserves the absolutely continuity with
respect to $\mu$, that is
\begin{equation}
 \forall \; v\in \Si(\espace), \; v<<\mu \; \Rightarrow \;\;
\forall t>0 \;\; \PP_v(Y_t\in \bullet)<< \mu (\bullet)\, .
\end{equation}
\end{proposition}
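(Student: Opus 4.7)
The strategy is to expand $\PP_v(Y_t \in \cdot)$ as a sum over the number of jumps $\N_t$ and to propagate absolute continuity through each successive jump by induction. From Lemma \ref{cotasup} the hitting times $T_K$ diverge almost surely; moreover, on the event $\{T_K>t\}$ the total jump rate is uniformly bounded by $Q_+(K)<\infty$ thanks to \eqref{locbo1}, so that $\N_t<\infty$ almost surely. Hence
\begin{equation*}
\PP_v(Y_t \in B) = \sum_{n \geq 0} \PP_v(Y_t \in B, \N_t = n),
\end{equation*}
and it suffices to show that each summand $\pi_n(v,t,\cdot) := \PP_v(Y_t\in\cdot,\N_t=n)$ is absolutely continuous with respect to $\mu$ whenever $v\ll\mu$.

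For $n=0$ the process has not moved, so $\pi_0(v,t,B)=\int_B e^{-Q(\eta)t}\,dv(\eta)\le v(B)$, which is absolutely continuous with respect to $\mu$ by hypothesis. The inductive step uses the first-jump decomposition
\begin{equation*}
\pi_{n+1}(v,t,B) = \int_0^t \pi_n\!\big(v_s Q,\, t-s,\, B\big)\,ds,
\end{equation*}
with $v_s(d\eta)=e^{-Q(\eta)s}v(d\eta)$ and $v_s Q(d\eta'):=\int v_s(d\eta)\,Q(\eta,d\eta')$. Since $v_s\ll v\ll\mu$, the induction closes provided the one-step operator $v\mapsto vQ$ preserves absolute continuity with respect to $\mu$.

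To establish this last fact, one splits $Q$ according to the three jump types in \eqref{rates11}. For a clonal birth $\eta\to\eta+\delta_y$ with $y\in\{\eta\}$, and for a death $\eta\to\eta-\delta_y$ with $\eta_y>1$, the trait support $\{\eta\}$ is unchanged and only the weight vector $\overline{\eta}$ is shifted; hence the image of $v$ under such a jump has, on each stratum $\atomiques_{\vec q}$, a density with respect to $\widehat{\sigma^{\#\vec q}}$ directly inherited from the corresponding density of $v$. For a death with $\eta_y=1$ the trait $y$ is removed, and by Fubini the coordinate projection of a density on $\widehat{\traits^{\#\eta}}$ yields a density on $\widehat{\traits^{\#\eta-1}}$ with respect to $\widehat{\sigma^{\#\eta-1}}$. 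For a mutation $\eta\to\eta+\delta_z$ with $z\notin\{\eta\}$, the new trait is inserted with density $G(\eta,z)$ with respect to $\sigma(dz)$; tensoring with the existing density of $v$ and then reordering by the measurable bijection $\preceq$, which preserves the product measure $\sigma^{\otimes(\#\eta+1)}$, produces a density on $\widehat{\traits^{\#\eta+1}}$ with respect to $\widehat{\sigma^{\#\eta+1}}$.

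The main subtlety is the combinatorial bookkeeping of the discrete structures: one must carefully separate the cases $\eta_y=1$ and $\eta_y>1$ in the death transitions, and check that the measurable ordering map used to represent point measures (Subsection \ref{sec2}) does not destroy absolute continuity. Once the one-step preservation has been verified for each of the three jump types, Fubini in $(s,\eta)$ closes the induction, and interchanging the countable sum with the measure, which is justified by the a.s.\ finiteness of $\N_t$, yields the claim $\PP_v(Y_t\in\cdot)\ll\mu$.
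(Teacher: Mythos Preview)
Your proposal is correct and follows essentially the same route as the paper: both decompose $\PP_v(Y_t\in\cdot)$ over the number of jumps (the paper phrases this via the jump times, $\{\tau_n\le t<\tau_{n+1}\}=\{\N_t=n\}$), dispatch the $n=0$ case since the process has not moved, and then reduce the inductive step to showing that the one-jump map $v\mapsto vQ$ (equivalently, the law of $Y_\tau$) preserves $\ll\mu$ by the same case analysis on clonal births, deaths with or without trait removal, and mutations. The paper writes out the explicit densities on each stratum $\atomiques_{\vec q}$, while you sketch the same mechanism in words; the arguments are otherwise identical.
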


\begin{proof}
Let us define the jump time sequence,
$$
\tau_0=0 \hbox{ and } \tau_{n}=\inf\{t> \tau_{n-1}:
Y_t\neq Y_{\tau_{n-1}}\} \hbox{ for } n\ge 1.
$$
In particular $\tau=\tau_1$ is the time of the first jump. Remark
that the sequence $\tau_n$ tends a.s. to infinity, as it can be
deduced from (\ref{decatiempo}). When we need to emphasize the
dependence on the initial condition $Y_0={\eta}$ we will denote
$\tau^{\eta}_n$ and $\tau^{\eta}$ instead of $\tau_n$ and $\tau$,
respectively. We have
$$
\PP_v(Y_t\in \bullet)=\sum\limits_{n\ge 0} \PP_v(Y_t\in \bullet,
\tau_n\le t<\tau_{n+1})\,.
$$

Since the sum of absolutely continuous measures is also absolutely
continuous it suffices to prove that
\begin{equation}
\label{lun3} \PP_v(Y_t\in \bullet, \tau_n\le t< \tau_{n+1})<<\mu
\hbox{ for all }n\ge 0 \,.
\end{equation}
First, let us show the case $n=0$. For $B\in {\cal B}(\espace)$ we have
that the expression
\begin{equation}
\label{mar20} \PP_v(Y_t\in B, t< \tau)=\PP_v(Y_0\in B, t< \tau)=
\int_B v(d{\eta})\PP_{\eta}(t< \tau)
\end{equation}
vanishes if $v(B)=0$, so also when $\mu(B)=0$.

\bigskip

Before considering the case $n\ge 1$ in (\ref{lun3}) let us prove
the relation
\begin{equation}
\label{lun4}
v<<\mu \,  \Rightarrow \, \PP_v(Y_\tau \in \bullet)<<\mu.
\end{equation}
It suffices to fix $\vec q \in \Sigma(\NN)$ and to show that the
measure $\PP_v(Y_\tau \in \bullet)$ restricted to the class of
sets $(\{\vec q\} \times C : C\in {\cal B}(\widehat
{\traits^{\#{\vec q}}}))$ is absolutely continuous with respect to
$\widehat {\sigma^{\#{\vec q}}}$ (see \ref{prod333}). Let $k=\#
{\vec q}$. For $k=0$ the claim holds because $\mu(\{0\})=1$. Let
$k\ge 1$. Recall that the probability measure $v$ has the form
stated in (\ref{lundi1}), so $\varphi_{\overline \eta}=d
v_{\overline \eta} \, / d \widehat {\sigma^{\#{\eta}}}$ is the
density function on the space $\widehat {\traits^{\#{\eta}}}$.

Below, for $\eta\in \atomiques$ we denote
\begin{eqnarray}
\nonumber
&{}& \eta^{-y}=\eta-\delta_y\,,\; y\in \{\eta\}\,; \;\;\;
\eta^{+z}=\eta+\delta_z  \,, \; \forall z\in \traits\,;\\
\label{igualdad2}
&{}& {\overline \eta}{\,}^{-y}:={\overline {\eta^{-y}}}\, ; \;\;
{\overline \eta}{\,}^{+z}:={\overline {\eta^{+z}}}\,; \,\;\;
{\etat}{\,}^{-y}:={\vec{\eta^{-y}}}\, ; \;\;
{\etat}{\,}^{+z}:={\vec{\eta^{+z}}}\,.
\end{eqnarray}

We denote $\hbox{v}_k={\widehat {\sigma^{k}}}({\widehat {\traits^k}})$
(for $k=0$ we set $\hbox{v}_0=1$).
Let $k\in \NN$, ${\vec q}\in \NN^k$ and $C\in {\cal B}(\widehat
{\traits^{k}})$ be fixed. We have
\begin{eqnarray*}
&{}&\PP_v(Y_{\tau}\in \{\vec q\} \times C)= \int\limits_\atomiques
dv(\eta')
\int\limits_{\{\vec q\} \times C}{1\over Q(\eta')} Q(\eta', d\eta)\\
&=& \int\limits_C \!\!{\bf 1}({\overline \eta}\!=\!{\vec q})\!\!
\left(\sum\limits_{y\in \{\eta\} \,: \, \eta_y >1}\!\!\!\!\!
\frac{(\eta_y \!-\!1)b_{y}({\eta}^{\, -y})}{Q({\eta}^{\, -y})}
{\overline v}({\vec q}^{\, -y}) \varphi_{ {\overline \eta}^{\,
-y}}({\etat}^{\, -y})\!\right) \!\!
d\widehat {\sigma^{k}}({\etat})\\
&{}& +\int\limits_C \!\!{\bf 1}({\overline \eta}\!=\!{\vec q})\!\!
\left(\sum\limits_{y\in \{\eta\}} \!\!\! \frac{(\eta_y
\!-\!1)\lambda_{y}({\eta}^{\, +y})} { Q({\eta}^{\, +y}) }
{\overline v}({\eta}^{\, +y}) \varphi_{ {\overline \eta}^{\, +y} }
({\etat}^{\, +y})\! \right) \!\!
d \widehat {\sigma^{k}}({\etat})\\
&{}& + \frac{\hbox{v}_{k+1}}{\hbox{v}_{k}} \int\limits_C \!\!{\bf
1}({\overline \eta}\!=\!{\vec q})\!\! \left(\,\int\limits_{z\in
\traits\setminus \{\eta\}}\!\!\! \frac{\lambda_{z}({\eta}^{+z})}
{Q({\eta}^{+z})} {\overline v}( {\overline \eta}^{\,+z})
\varphi_{{\overline \eta}^{\,+z}}({\etat}^{+z})
d\sigma(z)\!\right) \!\!
d\widehat {\sigma^{k}} ({\etat})\\
&{}& +\frac{\hbox{v}_{k-1}}{\hbox{v}_{k}} \int\limits_C \!\!{\bf
1}({\overline \eta}\!=\!{\vec q})\!\! \left(\sum\limits_{y\in
\{\eta\} \,: \, \eta_y >1} \!\!\!\!\!\!\!\! {\overline v}(
{\overline \eta}^{\, -y}) \varphi_{ {\overline
\eta}^{\,-y}}({\etat}^{-y})\!\!\! \sum\limits_{y'\in
\{\eta\}\setminus \{y\}} \!\!\!\!\!\! \frac{\eta_{y'}
m_{y'}({\eta}^{-y}) g_{y'}(y)}{Q({\eta}^{-y})} \!\right) \!\!
d\widehat {\sigma^{k}}({\etat})\,,
\end{eqnarray*}
where in the last two terms we have used the following relations
$$
d\sigma(z)d\widehat{\sigma^{k}}({\etat})=
\frac{\hbox{v}_{k}}{\hbox{v}_{k+1}}
d\widehat{\sigma^{k+1}}({\etat}^{\, +z})\,, \; z\not\in \{\eta\}
$$
and
$$
d\widehat{\sigma^{k}}(\etat)= \frac{\hbox{v}_{k}}{\hbox{v}_{k-1}}
d\sigma(y') \, d\widehat{\sigma^{k-1}} ({\etat}^{\,-y})\,, \; y\in
\{ \eta\}.
$$

Hence, the relation (\ref{lun4}) is proved. An inductive argument
gives
\begin{equation}
\label{mar10} \PP_v(Y_{\tau_n}\in \bullet)<<\mu \; \hbox{ for
every } \, n \ge 1 \; .
\end{equation}

Now, let us show that (\ref{lun3}) holds for $n\ge 1$.
Denote by $F_n$ the distribution of
$\tau_n$. By the strong Markov property and Fubini theorem we get
$$
\PP_v(Y_t\in \bullet, \tau_n\le t < \tau_{n+1})=\int_0^t
\EE_v\left(\PP_{Y_{\tau_n}}(Y_{t-s}\in \bullet, \tau \ge
t-s)\right) dF_n(s)\,.
$$
which, by using relations (\ref{mar20}) and (\ref{mar10}),  is
absolutely continuous with respect to $\mu$.
\end{proof}

\subsection{ Evolution after the first mutation }
\label{subsub42}

We want to study the  absolute continuity with respect to $\mu$ of
the law  of $Y_t$ initially distributed
according to  a general measure
$v$. To this aim we will introduce the first mutation
time. Note that a mutant individual
 has a different trait from those of
its parent, so the time of first mutation is
\begin{equation}
\label{chichi}
\chi=\inf\{t\ge 0: \{Y_t\}\not\subseteq \{Y_0\} \, \}\,.
\end{equation}
When $\chi$ is finite we have $\{Y_\chi\}\neq \emptyset$,
so $(\chi<\infty)\Rightarrow (\chi < T_0)$.

\bigskip

Now, let us consider the first time where the traits of the initial
configuration disappear,
$$
\kappa=\inf\{t\ge 0: \{Y_t\}\cap \{Y_0\}=\emptyset\}\,,
$$
and for a fixed $\eta$, the first time where the traits of $\eta$
disappear, $\kappa^{\eta}=\inf\{t\ge 0: \{Y_t\}\cap
\{{\eta}\}=\emptyset\}$. When $ \{\eta\} \cap \{Y_0\} =\emptyset$,
then $\kappa^{\eta}$=0.

\medskip
 We have $\kappa\neq \chi$ except when
$\kappa=\chi=\infty$. Obviously $\kappa\le T_0$. Moreover
\begin{equation}
(\chi\!> \! \kappa) \Leftrightarrow
(\infty\!=\!\chi\!> \!\kappa)\Leftrightarrow (\chi \!> \!\kappa\!=\! T_0)
\; \hbox{ and } \;
(\kappa\!<\!T_0) \Leftrightarrow (\chi\!< \! \kappa\!< \!T_0) \,.
\end{equation}

Also note that  $(\kappa<\chi)\cap (\kappa\le t) \subseteq
(\kappa=T_0\le t)$.
\medskip
Since $\PP_{\eta}(Y_t\in \bullet, T_0\le t)=
\delta_0(\bullet)$ is concentrated at ${\eta}=0$, then
$$
\PP_{\eta}(Y_t\in \bullet, \kappa<\chi, \kappa \le t )= \delta_0(\bullet).
$$
The unique nontrivial cases are the following two ones.

\begin{proposition}
\label{ac2m}
Let ${\eta}\in \atomiques^{-0}$ and $t\ge 0$, we have:

\medskip

\noindent \item $(i)$ $\PP_{\eta}(Y_t\in \bullet , \chi < \kappa \le t<T_0)$
is absolutely continuous with respect to $\mu$ and it is concentrated
in $\atomiques^{-0}$;

\medskip

\noindent \item $(ii)$
$\PP_{\eta}(Y_t\in \bullet, t <\kappa )$ is singular with respect to $\mu$.
\end{proposition}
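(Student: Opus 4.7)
The two parts call for quite different techniques. Part (ii) will follow from the non-atomicity of $\sigma$, while part (i) extends the absolute-continuity preservation of Proposition \ref{propo1} to the singular initial law $\delta_\eta$, by tracking the action of the Poisson construction of Section \ref{poissones} on the traits introduced by mutation events.

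For (ii), observe that on $\{t<\kappa\}$ one has $\{Y_t\}\cap\{\eta\}\neq\emptyset$, so $\PP_\eta(Y_t\in\bullet,\,t<\kappa)$ is supported in
$$
A_\eta=\bigcup_{y\in\{\eta\}}\{\xi\in\atomiques^{-0}:y\in\{\xi\}\}.
$$
For each fixed $y\in\{\eta\}$ and each $k\ge 1$, the set $\{\xi\in\atomiques_k:y\in\{\xi\}\}$ is the finite union over $i=1,\dots,k$ of the coordinate hyperplanes $\{\vec\xi\in\widehat{\traits^k}:\xi_i=y\}$, each of $\widehat{\sigma^k}$-measure zero because $\sigma$ is non-atomic. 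Since $\{\eta\}$ is finite, summing over $y$ and $k$ yields $\mu(A_\eta)=0$, so the measure is singular with respect to $\mu$.

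For (i), I would use the strong Markov property at the stopping time $\chi$. By the Poisson construction, on $\{\chi<\infty\}$ one has $Y_\chi=\eta''+\delta_Z$ with $\{\eta''\}\subseteq\{\eta\}$, and conditionally on everything up to $\chi$ and on the identity of the parent individual, the law of $Z$ admits a density of the form $g_y(\cdot)$ with respect to $\sigma$. After conditioning on $(\chi,\eta'',Z)=(s,\eta'',z)$, what remains is to study
$$
\PP_{\eta''+\delta_z}\big(Y_{t-s}\in\bullet,\,\kappa^{\eta''}\le t-s<T_0\big),
$$
which, after decomposing by the number $n$ of subsequent jumps, can be handled by the same inductive computation as in the proof of Proposition \ref{propo1}. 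Each contribution is absolutely continuous with respect to $\mu$: the constraint $\kappa^{\eta''}\le t-s$ eliminates the trajectories in which any singular trait of $\eta''$ survives until $t$, while every trait present at time $t$ is either $z$ itself (absolutely continuous with respect to $\sigma$ via the density $g_y$), a clonal descendant of $z$ (inheriting its location), or the location of a later mutation (again absolutely continuous thanks to the density factor $g_\cdot(\cdot)$). Integration over $(s,\eta'',z)$ preserves absolute continuity in $z$.

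The main obstacle is the genealogical bookkeeping needed to justify the dichotomy "original $\eta$-trait versus mutant descendant" throughout the jump structure. A clean way to handle it is to iterate the above step at each successive mutation time: every mutation injects a trait with density with respect to $\sigma$, non-atomicity of $\sigma$ guarantees that this new trait does not coincide with any existing one, and on the event $\{\chi<\kappa\le t\}$ all originally-singular traits have been eliminated by time $t$. The resulting density with respect to $\widehat{\sigma^{\#Y_t}}$ can be expressed as a finite sum over the possible jump-type sequences in $[0,t]$, yielding the desired absolute continuity with respect to $\mu$.
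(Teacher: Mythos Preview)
Your proposal is correct and follows essentially the same route as the paper. For (ii) you and the paper both show that on $\{t<\kappa\}$ the law is supported in $\{\xi:\{\xi\}\cap\{\eta\}\neq\emptyset\}$, a $\mu$-null set by non-atomicity of $\sigma$; for (i) you both decompose at the first mutation time $\chi$, use that the new trait $Z$ has a $\sigma$-density $g_y$, and reduce the remaining piece $\int g_y(z)\,\PP_{\xi+\delta_z}(Y_{t-s}\in\bullet,\kappa^{\xi}\le t-s)\,d\sigma(z)$ to the inductive jump-by-jump computation of Proposition~\ref{propo1}, the constraint $\kappa^{\xi}\le t-s$ guaranteeing no singular original trait survives.
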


\begin{proof}
Let us show $(i)$. From the Markov property we have,
\begin{eqnarray}
\PP_{\eta}(Y_t\in \bullet, \chi < \kappa \le t )&=&
\sum\limits_{\xi: \emptyset \neq \{\xi\}\subseteq \{{\eta}\}}
\!\!\!
\PP_{\eta}(\chi < \kappa \! \le \! t, Y_{\chi^-}\! = \! \xi,
Y_t \! \in \! \bullet)\\
\nonumber
&=&
\sum\limits_{\xi: \emptyset \neq \{\xi\}\subseteq \{{\eta}\} }
\, \sum\limits_{y\in \{\xi\}}\!\frac{\xi_y m_y(\xi)}{Q({\xi})}
\int_0^t \!\! \PP_{\eta}
(\chi \! \in \! ds, Y_{s^-} \! = \! \xi)\! \times \\
\nonumber
&{}& \int\limits_{\traits\setminus \{\xi\}} \!\!\!\!
g_y(z) \PP_{\xi^{+z}} (Y_{t-s}\! \in \!
\bullet, \kappa^{\xi}\! \le \! t\!-\!s) d\sigma(z).
\end{eqnarray}
Hence, it is sufficient to show that for every $u>0$, ${\eta}\in
\atomiques^{-0}$ and $y\in \{{\eta}\}$, it holds
$$
\int\limits_{\traits\setminus \{{\eta}\}}
\PP_{{\eta}^{+z}}
(Y_u\in \bullet, \kappa^{{\eta}}\le u)\,g_y(z)
 \, d\sigma(z) \, <<  \mu(\bullet).
$$

By using $\int\limits_{\traits\setminus \{{\eta}\}}
\PP_{{\eta}^{+z}} \left(\{Y_u\}\cap \{{\eta}\} \neq
\emptyset,\kappa^{{\eta}} \le u\right)g_y(z)d\sigma(z)=0$, and
since the measure $\sigma$ is non-atomic, a similar proof to the
one showing Proposition \ref{propo1} works and proves the result.
Indeed, for each $t>0$, the singular part with respect to $\mu$ of
$\mathbb{P}_\eta(Y_t\in \cdot)$ is a measure on the set of atomic
measures with support contained in $\{\eta\}$ (corresponding to
death or clonal events from individuals initially alive).

\bigskip

Let us show $(ii)$. Let $\{\eta\}\subset \traits$ be the finite
set of initial traits and put $k=\#{\eta}$. Consider the Borel set
$B=\{\xi \in \atomiques^{-0}: \{\xi\}\cap \{\eta\}\neq \emptyset
\}$ and define $B_{l,n}=\{\xi \in \atomiques^{-0}: \# {\xi}=n,
|\{\xi\}\cap \{\eta\}|=l\}$ for $n\in \NN$, $l=1,..., n\land k$.
We have $B=\bigcup\limits_{n\in \NN, \, l\in \{1,..., n\land k\}}
B_{l,n}$. Since  $\sigma$ is non-atomic we have $\mu(B_{l,n})=0$
for all $l\in \{1,...n\land k\}$. On the other hand, from the
definition of $\kappa$ we have $\PP_{\eta}(Y_t\in B, t <\kappa
)=1$, and the result follows.
\end{proof}

\bigskip

Let $v\in \petit(\espace)$. We denote by
$v^t$ the distribution of $Y_t$ when
the distribution of $Y_0$ is $v$, that is
\begin{equation}
v^t(B)=\PP_v(Y_t\in B)\, , \;\,
B\in {\cal B}(\espace)\,, \; t\ge 0\,.
\end{equation}
We denote by $v=v^{\text{ac}}+v^{\text{si}}$
the Lebesgue decomposition of $v$ into its absolutely continuous part
$v^{\text{ac}}<< \mu$ and its singular part
$v^{\text{si}}$ with respect to $\mu$. For $v^t$ this decomposition
is written as $v^t=v^{t, \text{ac}}+v^{t, \text{si}}$.
As usual, $\delta_{\eta}$ is the Dirac measure at
${\eta}\in \espace$, so $\delta_{\eta}^t$ denotes the measure
$\delta_{\eta}^t (\bullet) =\PP_{\eta}(Y_t\in \bullet)$.
We will denote by $\hbox{supp}(v)$ the closed
support of a measure $v$.

\medskip

\begin{proposition}
\label{propo2}
The process $Y$ verifies:
\begin{itemize}

\item[(i)] For all $t>0$ and all ${\eta}\in \atomiques^{-0}$ we have
$\delta_{\eta}^{t, \text{ac}}(\atomiques^{-0})>0$;

\item[(ii)] For all $t>0$ and all $v\in \Si(\atomiques^{-0})$ it
holds $v^{t,\text{ac}}\ge \int_{\atomiques^{-0}}
\delta_{\eta}^{t,\text{ac}} v( d {\eta})>0$;

\item[(iii)] Assume condition (\ref{hyp-unif}). Then for all
${\eta} \in \atomiques^{-0}$ with $\{\eta\}\subseteq
\hbox{Supp}(\sigma)$ and for all $\epsilon> 0$,
the following relation holds,
$$
\forall \, t>0 \,, \;\;\;
\delta_{\eta}^{t, \text{ac}}\left(B(\eta, \epsilon)\right)>0\,,
$$
where $B(\eta, \epsilon)=\{\eta' \in \espace: \|\eta'-\eta\|<\epsilon\}$.
\end{itemize}
\end{proposition}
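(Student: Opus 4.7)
For (i), since Proposition \ref{ac2m}(i) already identifies the sub-probability measure $\PP_\eta(Y_t\in\bullet,\,\chi<\kappa\le t<T_0)$ as absolutely continuous with respect to $\mu$ and supported in $\atomiques^{-0}$, and since it is dominated by $\delta_\eta^t$, we obtain $\delta_\eta^{t,\text{ac}}(\atomiques^{-0})\ge \PP_\eta(\chi<\kappa\le t<T_0)$. It is then enough to exhibit one trajectory realizing $\{\chi<\kappa\le t<T_0\}$ with positive probability. The plan is to schedule three phases: in $[0,t/3]$, force a single mutation from some fixed $y_0\in\{\eta\}$ whose offspring lands in $\traits\setminus\{\eta\}$ (a set of full $\sigma$-measure since $\sigma$ is non-atomic); in $(t/3,2t/3)$, make the $\|\eta\|$ original individuals die successively; in $(2t/3,t)$, force no event to touch the surviving mutant. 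Each phase has strictly positive probability by the positivity and continuity of the rates in (\ref{cont111}) together with (\ref{co333}), so the conjunction has positive probability and forces $\chi<\kappa\le t<T_0$.

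For (ii), I would split $\delta_\eta^t=\delta_\eta^{t,\text{ac}}+\delta_\eta^{t,\text{si}}$ and choose a jointly measurable version $f_\eta(\xi)$ of the $\mu$-density of $\delta_\eta^{t,\text{ac}}$ (this is a standard disintegration fact for Markov kernels on a Polish space). Fubini's theorem then gives
\begin{equation*}
\int_{\atomiques^{-0}}\delta_\eta^{t,\text{ac}}(d\xi)\,v(d\eta)=\left(\int f_\eta(\xi)\,v(d\eta)\right)\mu(d\xi),
\end{equation*}
which is absolutely continuous with respect to $\mu$ and is dominated by $v^t$. Uniqueness of the Lebesgue decomposition then yields $v^{t,\text{ac}}\ge\int\delta_\eta^{t,\text{ac}}\,v(d\eta)$, and strict positivity is inherited from (i) since $v(\atomiques^{-0})>0$.

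For (iii), set $N=\|\eta\|$ and list the individuals as $(y_1,\dots,y_N)$ (with multiplicity). Fix disjoint open neighborhoods $U_i\ni y_i$ small enough that any configuration placing exactly one particle in each $U_i$ lies in $B(\eta,\epsilon)$. I will build a three-phase scheduled trajectory: in $[0,t/3]$, each original $y_i$-individual performs one mutation producing an intermediate mutant at some $w_i\in\traits$ (no constraint on $w_i$); in $[t/3,2t/3]$, each $w_i$ performs a further mutation producing a particle at some $z_i\in U_i$; in $[2t/3,t]$, all $N$ originals and all $N$ intermediates die while no event affects any $z_i$. The endpoint is then $\sum_i\delta_{z_i}$, which lies in $B(\eta,\epsilon)$, and the joint law of $(z_1,\dots,z_N)$ produced by this scheduled event has a density with respect to $\widehat{\sigma^N}$ coming from the integrated factor $\prod_i g_{w_i}(z_i)$, so it contributes to $\delta_\eta^{t,\text{ac}}$ restricted to $B(\eta,\epsilon)$.

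The main obstacle is the positivity of the probability of this trajectory, and this is precisely the point where (\ref{hyp-unif}) is used. Because (\ref{hyp-unif}) is only a $\sigma\otimes\sigma$-almost-everywhere statement, one cannot claim in general that a single mutation from the fixed trait $y_i$ can land in a prescribed small neighborhood of $y_i$; the two-step chain circumvents this. Indeed, by Fubini applied to (\ref{hyp-unif}), for $\sigma$-a.e.\ $w$ we have $g_w(z)>0$ for $\sigma$-a.e.\ $z$, hence $\int_{U_i}g_w(z)\,d\sigma(z)=\sigma(U_i)$, which is strictly positive because $y_i\in\text{Supp}(\sigma)$ and $U_i$ is an open neighborhood of $y_i$. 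Integrating the Phase~2 density over this full-$\sigma$-measure set of admissible intermediates $w_i$, and multiplying by the strictly positive factors coming from the birth rates, death rates and exponential waiting times in the other phases, yields a strictly positive total probability and completes the sketch.
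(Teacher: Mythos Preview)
Your argument for (i) is the paper's argument verbatim: lower bound $\delta_\eta^{t,\text{ac}}$ by the sub-probability on $\{\chi<\kappa\le t<T_0\}$, which is absolutely continuous by Proposition~\ref{ac2m}(i), and exhibit a scheduled trajectory making that event positive. For (ii) the paper simply says ``it suffices to show (i) and (iii)''; your Fubini/disintegration line makes this explicit and is a welcome clarification.

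For (iii) the key idea is the same as the paper's: condition~(\ref{hyp-unif}) is only $\sigma\otimes\sigma$-a.e., so one cannot force a single mutation from a deterministic trait to hit a prescribed small set, but a two-step mutation chain through the full-measure set $D=\{w:\sigma(g_w=0)=0\}$ does the job. The organisation differs slightly: the paper routes everything through a \emph{single} intermediate $y'\in D$, then performs successive mutations from $y'$ into $B(y_i,\epsilon)\cap D$ and uses clonal births to reproduce the multiplicities of $\eta$; you instead assign one intermediate $w_i$ per original individual (counted with multiplicity) and avoid clonal births altogether. Both land inside $\{\chi<\kappa\le t<T_0\}$, so absolute continuity follows from Proposition~\ref{ac2m}(i) rather than from a direct density computation.

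Two small slips to fix. First, ``disjoint open neighborhoods $U_i\ni y_i$'' is impossible when $\eta$ has repeated traits (your list is \emph{with multiplicity}, so $y_i=y_j$ can occur); just drop ``disjoint'' and require $U_i=B(y_i,\epsilon')$ for $\epsilon'$ small, which is all you need for $\sum_i\delta_{z_i}\in B(\eta,\epsilon)$ in the Prohorov metric. Second, the equality $\int_{U_i}g_w(z)\,d\sigma(z)=\sigma(U_i)$ is false ($g_w$ is a density, not the constant $1$); what you mean and need is that this integral is \emph{strictly positive} whenever $w\in D$ and $\sigma(U_i)>0$, which is exactly what your sentence concludes.
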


\begin{proof}
It suffices to show $(i)$ and $(iii)$. Let us show the first part.
Fix $t\ge 0$ and ${\eta}\in \atomiques^{-0}$. We claim that
$\PP_{\eta}(\chi < \kappa \le t  < T_0) > 0$. In fact, it suffices
to consider the event where a mutation occurs at the first jump
and after it all the initial traits disappear before $t$ and these
changes are the unique ones before $t$. This event has strictly
positive probability, so the claim is proved. Proposition
\ref{ac2m} $(i)$ gives $\PP_{\eta}(Y_t\in \bullet , \chi < \kappa
\le t<T_0) << \mu$ and we deduce,
$$
\delta_{\eta}^{t, \text{ac}}(\atomiques^{-0})\ge
\PP_{\eta}(\chi < \kappa \le t<T_0)>0\,.
$$
Then $(i)$ holds.

\bigskip

The proof of $(iii)$ is entirely similar to the proof of $(i)$
but we need some previous remarks.
 For every $y\in \traits$ we have
$\int\limits_{\traits} g_y(z)d\sigma(z)=1$, and so, $\sigma(\{z\in
\hbox{Supp}(\sigma): g_y(z)>0\})>0$. On the other hand from
condition (\ref{hyp-unif}) the set
$$
D=\{y\in \hbox{Supp}(\sigma):  \sigma(\{z\in
\hbox{Supp}(\sigma):g_y(z)>0\})=1\}
$$
verifies $\sigma(D)=1$. In particular $\sigma(z\in D: g_y(z)>0)>0$
is satisfied for all $y\in \traits$. Now, let $\{\eta\}=\{y_i:
i=1,...,k\}$ and consider the following event: a mutation occurs
at the first jump to a trait $y'\in D$, afterwords successive
mutations to the traits in $B(y_i,\epsilon)\cap D$ take place,
then for   each trait $y_i$ there are $q_i-1$ clonal births, and
finally all the initial traits and $y'$ disappear. This history
occurs  before $t$ and assume that these changes are the unique
ones that happen before $t$.

From condition (\ref{hyp-unif})
and the definition of $D$ this event has strictly positive probability
and the claim is proved.
\end{proof}

\subsection{ Decomposition of q.s.d. }
\label{subsub43}

Let us study the Lebesgue decomposition of a q.s.d. with
respect to $\mu$.

\begin{proposition}
\label{nante20}
Let $\nu$ be a q.s.d. on $\atomiques^{-0}$. Then,

\medskip

\noindent $(i)$ $\nu^{\text{ac}}\neq 0$;

\bigskip

\noindent $(ii)$ Assume Condition (\ref{hyp-unif}). Then
$\{\eta\in \atomiques^{-0}: \{\eta\}\subseteq
\hbox{Supp}(\sigma)\} \subseteq \hbox{Supp}(\nu^{\text{ac}})$;

\bigskip

\noindent $(iii)$ If $\nu^{\text{si}}\neq 0$, the probability
measure $\nu^{*\, \text{si}}:=
\nu^{\text{si}}/\nu^{\text{si}}(\atomiques^{-0})$ satisfies
\begin{equation}
\label{jeud10'}
\PP_{\nu^{*\,\text{si}}}(Y_t\in B)=e^{-\theta(\nu)t}\nu^{*\,si}(B)\,
\; \forall \; B\in {\cal B}(B^{\text{si}})\,, t\ge 0\,,
\end{equation}
where $B^{\text{si}}\in {\cal B}(\atomiques^{-0})$ is a measurable
set such that
 $\mu(B^{\text{si}})=0$ and
$\nu^{\text{si}}(B^{\text{si}})\hfill\break=
\nu^{\text{si}}(\atomiques^{-0})$.

\end{proposition}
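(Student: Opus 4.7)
The proof splits naturally into three parts, all anchored on the q.s.d.\ equation \eqref{nant5}, which reads $\nu = e^{\theta(\nu)\,t}\,\nu^{t}\big|_{\atomiques^{-0}}$ for every $t\ge 0$. By uniqueness of the Lebesgue decomposition with respect to $\mu$ (and the compatibility of that decomposition with scalar multiplication and restriction to $\atomiques^{-0}$), this identity yields
$$
\nu^{\text{ac}} \;=\; e^{\theta(\nu)\,t}\,\nu^{t,\text{ac}}\big|_{\atomiques^{-0}}, \qquad
\nu^{\text{si}} \;=\; e^{\theta(\nu)\,t}\,\nu^{t,\text{si}}\big|_{\atomiques^{-0}}.
$$
This single identity is the backbone of all three items.

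For $(i)$ I will combine it with Proposition \ref{propo2}$(ii)$: the absolutely continuous part $\nu^{t,\text{ac}}$ dominates $\int_{\atomiques^{-0}}\delta_{\eta'}^{t,\text{ac}}\,d\nu(\eta')$; since by Proposition \ref{propo2}$(i)$ each $\delta_{\eta'}^{t,\text{ac}}(\atomiques^{-0})$ is strictly positive and $\nu$ is a probability measure on $\atomiques^{-0}$, the integrated total mass is strictly positive, and therefore $\nu^{\text{ac}}\neq 0$.

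For $(ii)$ I will specialize the same identity to a Prohorov ball $B(\eta^{*},\epsilon)$, obtaining
$$
\nu^{\text{ac}}\bigl(B(\eta^{*},\epsilon)\bigr)\;\ge\; e^{\theta(\nu)\,t}\int_{\atomiques^{-0}}\delta_{\eta'}^{t,\text{ac}}\bigl(B(\eta^{*},\epsilon)\bigr)\,d\nu(\eta').
$$
It then suffices to show the integrand is strictly positive for every $\eta'\in\atomiques^{-0}$. For this I will adapt the path construction of Proposition \ref{propo2}$(iii)$, now starting at an arbitrary $\eta'$: a first mutation from some $y\in\{\eta'\}$ lands in $D=\{y:\,g_y>0\ \sigma\text{-a.e.}\}$ with positive probability, since $\sigma(D)=1$ by \eqref{hyp-unif} and boundedness of $g$ force $\int_{D}g_y\,d\sigma=1$. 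Once a trait lies in $D$, further mutations can be driven into prescribed small neighborhoods of each $y_i^{*}\in\text{Supp}(\sigma)$; clonal births then adjust the multiplicities to match $\overline{\eta^{*}}$, and deaths eliminate the initial support together with the auxiliary mutants. The whole history fits into $[0,t]$ with strictly positive probability, and because it contains at least one mutation, Proposition \ref{ac2m}$(i)$ identifies its contribution with the absolutely continuous part of $\delta_{\eta'}^{t}$.

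For $(iii)$, the set $B^{\text{si}}$ is provided by the Lebesgue decomposition of $\nu^{\text{si}}$. For any Borel $B\subseteq B^{\text{si}}$ I will decompose $\nu=\nu^{\text{ac}}+\nu^{\text{si}}$ and use linearity: $\PP_\nu(Y_t\in B)=\PP_{\nu^{\text{ac}}}(Y_t\in B)+\PP_{\nu^{\text{si}}}(Y_t\in B)$. By Proposition \ref{propo1}, applied to the normalized absolutely continuous part, the law of $Y_t$ under $\PP_{\nu^{\text{ac}}}$ is absolutely continuous with respect to $\mu$, so the first summand vanishes on the $\mu$-null set $B$. The q.s.d.\ equation then reads $\nu^{\text{si}}(B)=\nu(B)=e^{\theta(\nu)\,t}\PP_{\nu^{\text{si}}}(Y_t\in B)$, and dividing by $\nu^{\text{si}}(\atomiques^{-0})>0$ yields exactly \eqref{jeud10'}. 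The main obstacle lies in $(ii)$: although the path construction is conceptually transparent, checking positivity for a general starting $\eta'$ (possibly with $\{\eta'\}\cap\text{Supp}(\sigma)=\emptyset$ or $\{\eta'\}\cap D=\emptyset$) forces the careful exploitation of \eqref{hyp-unif} through the full-measure set $D$, together with some bookkeeping to fit the discrete structure $\overline{\eta^{*}}$ inside the Prohorov ball via the correct combination of clonal and mutation events.
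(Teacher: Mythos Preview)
Your proof is correct and follows essentially the same route as the paper: part~$(i)$ via Proposition~\ref{propo2}$(i)$--$(ii)$, part~$(iii)$ via Proposition~\ref{propo1} applied to the absolutely continuous piece, and part~$(ii)$ via the path construction behind Proposition~\ref{propo2}$(iii)$. The only notable difference is organizational: the paper works through the concrete set $H=\atomiques^{-0}\setminus B^{\text{si}}$ rather than your direct Lebesgue-decomposition identity, and for $(ii)$ the paper merely writes ``a similar proof as above shows $(ii)$'' whereas you spell out---correctly and more carefully---that the construction must begin at an \emph{arbitrary} $\eta'\in\atomiques^{-0}$ (not just at $\eta^{*}$ as in the literal statement of Proposition~\ref{propo2}$(iii)$) and target the ball around $\eta^{*}$.
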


\begin{proof}
We first note that the existence of the set $B^{\text{si}}$ is ensured by
the Radon-Nikodym decomposition theorem.
Set $H:=\atomiques^{-0}\setminus B^{\text{si}}$. Let us
show that
\begin{equation}
\label{jeud1} \forall \, t>0, \; \forall \, {\eta}\in
\atomiques^{-0}: \;\, \delta_{\eta}^{t}(H) >0.
\end{equation}
Since $\mu(B^{\text{si}})=0$ and $\delta^{t, \text{ac}}_{\eta} \, << \mu$
we have $\delta^{t, \text{ac}}_{\eta}(B^{\text{si}})=0$. Then
$\delta^{t, \text{ac}}_{\eta}(H)=
\delta^{t, \text{ac}}_{\eta}(\atomiques^{-0})$.
By Proposition \ref{propo2},
$\delta^{t, \text{ac}}_{\eta}(\atomiques^{-0})>0$
for all $t>0$ and all ${\eta}\in \atomiques^{-0}$. So
$$
\delta_{\eta}^{t}(H)\ge \delta^{t, \text{ac}}_{\eta}(H)=
\delta^{t, \text{ac}}_{\eta}(\atomiques^{-0})>0,
$$
and the assertion (\ref{jeud1}) holds.

\bigskip

Now we prove part $(i)$. We can assume $\nu^{\text{si}}\neq 0$, if
not the result is trivial. From (\ref{jeud1}) we get,
$$
\nu^{t}(H)=\int_{\atomiques^{-0}}\; \delta_{\eta}^{t}(H)
\nu(d{\eta})>0.
$$
On the other hand, from relation (\ref{nant5}) we obtain
$\nu(H)=e^{\theta(\nu)t} \nu^{t}(H)>0$. Since
$\nu^{\text{si}}(H)=0$, we necessarily have
$\nu^{\text{ac}}(H)=\nu(H)>0$, so $(i)$ holds. Now, from
Proposition \ref{propo2} $(iii)$ a similar proof as above shows
$(ii)$.

\bigskip

Let us show $(iii)$.
Let $\nu^{*\, \text{ac}}:=\nu^{\text{ac}}/\nu^{\text{ac}}(\atomiques^{-0})$.
For every $B\subseteq B^{\text{si}}$, $B\in {\cal B}(\atomiques^{-0})$, we have
\begin{equation}
\label{jeud11'}
\nu(B)=e^{\theta(\nu)t}\left(\nu^{\text{ac}}(\atomiques^{-0})
\PP_{\nu^{*\, \text{ac}}}(Y_t\in B)+
\nu^{\text{si}}(\atomiques^{-0})\PP_{\nu^{*\, \text{si}}}(Y_t\in B)\right) \,.
\end{equation}
By Proposition \ref{propo1}, $Y$ preserves $\mu$,
so  $\PP_{\nu^{*\, \text{ac}}}(Y_t\in \bullet)<< \mu$. Since
$\mu(B^{\text{si}})=0$ we get
$\PP_{\nu^{*\, \text{ac}}}(Y_t\in B^{\text{si}})=0$.
By evaluating (\ref{jeud11'}) at $t=0$ and since
$B\in {\cal B}(B^{\text{si}})$
we find $\nu^{*\, \text{si}}(B)=\nu(B)/ \nu(B^{\text{si}})$. By putting
all these elements
together we obtain relation (\ref{jeud10'}).
\end{proof}

\section{ The uniform case }
\label{subsub56}

\subsection{ The model }

In this section, we assume that the individual jump rates satisfy,
\begin{equation}
\lambda_{y}(\eta)=\lambda\,,\; b_{y}(\eta)=b(1-\rho)\,, \;
m_{y}(\eta)=b\, \rho\,, \; \;\ \forall y\in \{\eta\} \,,
\end{equation}
$\lambda$, $b$ and $\rho$ are positive numbers with $\rho<1$.
Recall that
$g:{\traits}\times {\traits} \to \RR_+$
is a jointly continuous nonnegative function
satisfying $\int\limits_{\traits}g_y(c)d\sigma(c)=1$ for all $y\in
\traits$ and the condition (\ref{hyp-unif}).

\bigskip

We observe that in this case the process of the total number of
individuals $\|Y\|=(\|Y_t\|: t\ge 0)$ is a Markov process and that
$Y_t=0\Leftrightarrow \|Y_t\|=0$, which means that the time of
absorption at $0$ of the processes $Y$ and $\|Y\|$ is the same
(note that even if the $0$'s have a different meaning, they are
identified).

\medskip

Now, in \cite{vandoorn} it is shown that there exists a q.s.d. for
the process $\|Y\|$ killed at $0$ if and only if $\lambda>b$. In
addition,  the extremal exponential decay rate of $\|Y\|$, defined
by $\sup\{\theta(\nu);\ \nu \ q.s.d.\}$,  is equal to $\lambda -b$
and there exists a unique (extremal) q.s.d. $\zeta^{\bf e}$ for
$\|Y\|$ with this exponential decay rate $\lambda-b$, given by
\begin{equation}
\label{chhc} \zeta^{\bf e}(k)=
\left(\frac{b}{\lambda}\right)^{k-1}\left(1-\frac{b}{\lambda}\right)
\, , \; \, k\ge 1.
\end{equation}

 When $\nu$ is a q.s.d. for $Y$ with exponential decay rate
$\theta(\nu)$ then the probability vector $\zeta=(\zeta(k): k\in
\NN)$ given by
\begin{equation}
\label{ecuacion0} \zeta(k)= {\nu}(\atomiques_k)\,, \; \; k\in \NN
\, ,
\end{equation}
is a q.s.d. with exponential decay rate $\theta=\theta(\nu)$,
associated with the linear birth and death process $\|Y\|$. Hence
a necessary condition for the existence of q.s.d. for the process
$Y$ is $\lambda>b$. We also deduce that all quasi-stationary
probability measures $\tilde{\nu}$ of $Y$ with exponential decay
rate $\lambda-b$ are such that
$\tilde{\nu}(\atomiques_k)=\zeta^{\bf e}(k)$, so by (\ref{chhc})
we get $$\tilde{\nu}(\varphi_1)<\infty, \ \hbox{ where }
\varphi_1(\eta)=\|\eta\|.$$

\bigskip

Now, we know from Theorem \ref{mmainn} that there exists a q.s.d.
$\nu$ with exponential decay rate
$\theta(\nu)=\nu(P_1(1))$. Moreover, it is
immediate to show that  $\varphi_1$ satisfies
$L\varphi_1=-(\lambda -b) \varphi_1$. Then, from Proposition
\ref{martingale} we get $P_1 \varphi_1=e^{-(\lambda -b)}
\varphi_1$. Hence, if $\nu$ is a q.s.d. provided by Theorem
\ref{mmainn}  its  exponential decay rate should be
$$
\theta=\lambda -b\;.
$$

\bigskip

Let us now consider the semi-group $R_{t}$ given by,
$$
R_{t}(\varphi)(\eta)= e^{\theta t} P_{t}(\varphi)(\eta) =
e^{\theta t}\EE_{\eta}\big(\varphi(Y_{t}){\bf 1}_{T_{0}>t}\big)\ ,
\;\;\, t\ge 0\,.
$$
The function $\varphi_1$ satisfies $R_{t}\varphi_1=\varphi_1$
$\nu-$a.e. for all q.s.d. $\nu$ with the exponential decay rate
$\theta$.

\bigskip

\begin{proposition}
\label{acI}
Every q.s.d. $\nu$ with exponential decay rate
$\theta=\lambda-b$ is
absolutely continuous with respect to $\mu$.
\end{proposition}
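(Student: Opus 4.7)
The strategy is to proceed by contradiction using the Lebesgue decomposition $\nu = \nu^{\text{ac}} + \nu^{\text{si}}$, exploiting the fact that in the uniform case $\varphi_1(\eta) = \|\eta\|$ is a genuine eigenfunction of the killed semigroup with eigenvalue $e^{-\theta t}$. Assume for contradiction that $\nu^{\text{si}} \neq 0$, and let $B^{\text{si}}$ be the set from Proposition \ref{nante20}$(iii)$, so that $\mu(B^{\text{si}}) = 0$ and $\nu^{\text{si}}$ is concentrated on $B^{\text{si}}$; put $H = \atomiques^{-0} \setminus B^{\text{si}}$ and $\nu^{*\,\text{si}} = \nu^{\text{si}}/\nu^{\text{si}}(\atomiques^{-0})$. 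The goal is to derive a contradiction between $\PP_{\nu^{*\,\text{si}}}(Y_t \in H) = 0$ (by a mass-balance computation using $\varphi_1$) and $\PP_{\nu^{*\,\text{si}}}(Y_t \in H) > 0$ (from absolute continuity of $\delta_\eta^{t,\text{ac}}$).

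For the mass balance I would use two identities. First, the remark already in the excerpt gives $\tilde{\nu}(\varphi_1) < \infty$ for any q.s.d.\ $\tilde{\nu}$ with decay $\theta = \lambda - b$ (its marginal on $\NN$ is the geometric $\zeta^{\mathbf e}$ from \eqref{chhc}); in particular $\nu^{*\,\text{si}}(\varphi_1) < \infty$. Second, $L\varphi_1 = -\theta \varphi_1$ combined with Proposition \ref{martingale}$(ii)$ applied to the linearly growing function $\varphi_1$ yields the pointwise identity $\EE_\eta(\|Y_t\|) = \|\eta\| e^{-\theta t}$; note that the indicator $\One_{T_0 > t}$ is automatic since $\|Y_t\| = 0$ on $\{T_0 \le t\}$. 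Integrating against $\nu^{*\,\text{si}}$ then gives
$$\int_{\atomiques^{-0}} \|\eta\| \, \PP_{\nu^{*\,\text{si}}}(Y_t \in d\eta) = e^{-\theta t}\, \nu^{*\,\text{si}}(\varphi_1).$$

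On the other hand, Proposition \ref{nante20}$(iii)$ asserts $\PP_{\nu^{*\,\text{si}}}(Y_t \in B) = e^{-\theta t}\, \nu^{*\,\text{si}}(B)$ for every Borel $B \subseteq B^{\text{si}}$. Applying this to the truncations $\varphi_1 \wedge n$ (which are bounded, so the identity yields equality of integrals) and letting $n \to \infty$ by monotone convergence, legitimate because $\nu^{*\,\text{si}}(\varphi_1) < \infty$, I obtain
$$\int_{B^{\text{si}}} \|\eta\| \, \PP_{\nu^{*\,\text{si}}}(Y_t \in d\eta) = e^{-\theta t}\, \nu^{*\,\text{si}}(\varphi_1).$$
Subtracting the two displayed equalities gives $\int_H \|\eta\| \, \PP_{\nu^{*\,\text{si}}}(Y_t \in d\eta) = 0$, and since $\|\eta\| > 0$ on $\atomiques^{-0}$, this forces $\PP_{\nu^{*\,\text{si}}}(Y_t \in H) = 0$. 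But Proposition \ref{propo2}$(i)$ together with $\delta_\eta^{t,\text{ac}} \ll \mu$ and $\mu(B^{\text{si}}) = 0$ yields $\delta_\eta^{t,\text{ac}}(H) = \delta_\eta^{t,\text{ac}}(\atomiques^{-0}) > 0$ for every $\eta \in \atomiques^{-0}$, so integration against $\nu^{*\,\text{si}}$ gives $\PP_{\nu^{*\,\text{si}}}(Y_t \in H) > 0$. The contradiction forces $\nu^{\text{si}} = 0$.

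The main delicate point is the monotone-convergence step: the identity from Proposition \ref{nante20}$(iii)$ is stated only for Borel subsets of $B^{\text{si}}$, not as a global identity of measures, so I must first test against bounded truncations $\varphi_1 \wedge n$ and only then pass to the limit, using the integrability bound $\nu^{*\,\text{si}}(\varphi_1) < \infty$ derived from the known marginal on $\NN$. Everything else is a clean mass-balance argument that leverages the fact that $\varphi_1$ is an actual (not just $\nu$-a.e.) pointwise eigenfunction in the uniform case.
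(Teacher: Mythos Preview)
Your argument is correct and follows essentially the same strategy as the paper's: Lebesgue decomposition, the pointwise eigenfunction identity $R_t\varphi_1=\varphi_1$, a mass-balance in $\varphi_1$ to force the singular part to be $R_t^\dag$-invariant, and then a contradiction via Proposition~\ref{propo2}. The paper organizes step three slightly differently (it shows $R_t^\dag(f\mu)\le f\mu$ directly from preservation of absolute continuity, then upgrades to equality by integrating $\varphi_1$, hence $R_t^\dag(\xi)=\xi$), whereas you route through Proposition~\ref{nante20}$(iii)$ and a subtraction; these are the same idea in different packaging. One small wording fix: $\nu^{*\,\text{si}}$ is not itself a q.s.d., so the ``in particular'' does not follow from the cited remark as stated --- just observe instead that $\nu^{\text{si}}\le\nu$ and $\nu(\varphi_1)<\infty$, which immediately gives $\nu^{*\,\text{si}}(\varphi_1)<\infty$.
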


\begin{proof}
Let $\nu$ be a q.s.d. which is not absolutely
continuous. Then we
can write the Lebesgue decomposition
$$
\nu=f\mu+\xi \, \;  \hbox{ that is } \nu(B)=\int_B fd\mu + \xi(B), \;
B\in \B(E)\,,
$$
where $f$ is a nonnegative $\mu-$integrable function and
$\xi$ is a singular measure with respect to $\mu$.

\bigskip

From now on we denote by $R_{t}^{\dag}$ the dual action of $R_t$
on the set of measures defined by $(R_{t}^{\dag}v) (\varphi)=v(R_t
\varphi)$ for every measure $v\in \borel_f(\espace)$ and any positive
measurable function
$\varphi$. Since $\nu$ is a q.s.d. it is
invariant by the adjoint semi-group $R_{t}^{\dag}$, that is
$R_{t}^{\dag}\nu=\nu$, then
$$
f\mu+\xi=\nu=R_{t}^{\dag}(\nu)=R_{t}^{\dag}(f\mu)+R_{t}^{\dag}(\xi)\;.
$$
On the other hand, it follows from Proposition \ref{propo1}
that $R_{t}^{\dag}(f\mu)\ll\mu$. Therefore
$$
R_{t}^{\dag}(f\mu)\le f\mu\;.
$$

Since $\varphi_1$ is $\nu$ integrable it must
also be $f d\mu$ integrable. From the relation
$R_{t}(\varphi_1)=\varphi_1$ we get,
$$
\int \varphi_1 \,fd\mu= \int \varphi_1\, dR_{t}^{\dag}(f\mu)
\,,
$$
and since $\varphi_1$ is strictly positive, we conclude
$R_{t}^{\dag}(f\mu)=f\mu$. This implies $R_{t}^{\dag}(\xi)=\xi$.
However by Proposition \ref{propo2} (ii) (with $v=\xi$),
$R_{t}^{\dag}(\xi)$ cannot be completely singular with respect to
$\mu$ unless $\xi$ vanishes. This concludes the proof of the
proposition.
\end{proof}

\medskip

Let us now turn to the study of uniqueness.

\medskip

\begin{lemma}
\label{conjecture} Assume condition (\ref{hyp-unif}):
$\sigma\otimes\sigma (\{g=0\})=0$. Then the Borel set
$\atomiques_{(1,1)}= \{\eta\in \espace: {\vec q}=(1,1)\}$
satisfies $\mu(\atomiques_{(1,1)})>0$. Moreover, for any q.s.d.
$\nu$ with exponential decay rate $\theta=\lambda-b$ and for any
Borel set $B$ with $\mu(\atomiques_{(1,1)}\cap B)>0$, we have
$\nu(\atomiques_{(1,1)} \cap B)>0$.
\end{lemma}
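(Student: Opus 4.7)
The plan splits into the two assertions of the lemma.

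\emph{First assertion.} I would read $\mu|_{\atomiques_{(1,1)}}$ off \eqref{muuuu}: under the identification $\eta\mapsto\vec\eta$ it is the product of the unit mass $\ell^2(\{(1,1)\})$ with $\widehat{\sigma^2}$ on $\widehat{\traits^2}$. Since $\sigma$ is a non-atomic probability measure on $\traits$, the diagonal of $\traits^2$ carries no $\sigma\otimes\sigma$-mass, so $\widehat{\sigma^2}(\widehat{\traits^2})=\tfrac12\sigma(\traits)^2=\tfrac12>0$, which proves $\mu(\atomiques_{(1,1)})>0$.

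\emph{Second assertion -- reduction.} Fix $B$ with $\mu(\atomiques_{(1,1)}\cap B)>0$. By Proposition~\ref{acI} we have $\nu\ll\mu$, and since $\mu$ is carried by configurations whose trait support lies in $\mathrm{Supp}(\sigma)$, so is $\nu$. Rewriting \eqref{nant5} with $\theta=\lambda-b$ as
$$
\nu(\atomiques_{(1,1)}\cap B)=e^{\theta t}\int_{\atomiques^{-0}}\PP_\eta\big(Y_t\in \atomiques_{(1,1)}\cap B,\ T_0>t\big)\,d\nu(\eta),
$$
the problem reduces to a reachability claim: for every $\eta\in\atomiques^{-0}$ with $\{\eta\}\subseteq\mathrm{Supp}(\sigma)$ and some $t>0$, the integrand on the right must be shown strictly positive.

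\emph{Reachability through one explicit scenario.} Set $k=\|\eta\|$. I would prescribe the following trajectory on $[0,t]$: (1) the first jump is a mutation from some $y\in\{\eta\}$, producing a new trait $z_1\in\traits\setminus\{\eta\}$; (2) the next $k$ jumps are the successive deaths of the $k$ original individuals; (3) the $(k+2)$-th jump is a mutation from the sole remaining trait $z_1$, producing a trait $z_2\neq z_1$; (4) no jump occurs thereafter before time $t$. Each elementary rate is strictly positive by \eqref{cont111}. Mimicking the Poissonian density computation used in the proof of Proposition~\ref{propo1}, the contribution of this event to $\PP_\eta(Y_t\in\cdot,T_0>t)$ is absolutely continuous with respect to $\widehat{\sigma^2}$ on $\widehat{\traits^2}$, with density bounded below by a strictly positive constant depending only on $t$ and $\eta$, times the symmetrized expression
$$
\sum_{y\in\{\eta\}}\eta_y\big(g_y(z_1)\,g_{z_1}(z_2)+g_y(z_2)\,g_{z_2}(z_1)\big)
$$
at the ordered pair $(z_1,z_2)\in\widehat{\traits^2}$. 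By \eqref{hyp-unif}, each summand is $\sigma\otimes\sigma$-a.e.\ positive, so the density is $\widehat{\sigma^2}$-a.e.\ strictly positive on $\widehat{\traits^2}$. Integrating over the image of $\atomiques_{(1,1)}\cap B$ in $\widehat{\traits^2}$, which has positive $\widehat{\sigma^2}$-measure by the first assertion, yields the required positivity.

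\emph{Main obstacle.} The only delicate step is the density computation. It requires conditioning on exactly $k+2$ Poissonian jumps of the specified types in the representation \eqref{bpe}, integrating out the inter-arrival exponentials and the intermediate mutation locations via Fubini, and reading off the joint density of $(z_1,z_2)$ on $\widehat{\traits^2}$. Once the product structure $g_y(z_1)g_{z_1}(z_2)$ is exposed, the recurrence condition \eqref{hyp-unif} delivers the needed $\widehat{\sigma^2}$-a.e.\ positivity in one line.
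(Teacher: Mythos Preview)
Your approach differs from the paper's. The paper does not run a dynamic reachability argument; instead it derives the stationary balance identity $\nu(Lf)=-\theta\,\nu(f)$ for compactly supported $f$, writes it out level by level (equation \eqref{dualu}), and extracts algebraic lower bounds: first $d\nu_{(1,1)}(y_1,y_2)\ge C\big(f_1(y_1)g_{y_1}(y_2)+f_1(y_2)g_{y_2}(y_1)\big)\,d\sigma(y_1)\,d\sigma(y_2)$ where $f_1$ is the density of $\nu$ on $\atomiques_1$ (nonzero by Lemma~\ref{thetaI}), then feeds this back into the level-$1$ equation to get $f_1(y)\ge C'\int f_1(u)g_u(y)\,d\sigma(u)$, and iterates once more to obtain a lower bound for $\nu_{(1,1)}$ that is $\widehat{\sigma^2}$-a.e.\ positive purely from \eqref{hyp-unif} and $f_1\not\equiv0$. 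Your probabilistic scenario construction is closer in spirit to the proof of Proposition~\ref{propo2}(iii) and is a legitimate alternative; in the uniform case the holding-rate $Q(\cdot)$ depends only on $\|\cdot\|$, so the time integrals really do factor out as a constant independent of $(z_1,z_2)$, as you claim.

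There is, however, a gap in the step ``By \eqref{hyp-unif}, each summand is $\sigma\otimes\sigma$-a.e.\ positive''. For a \emph{fixed} trait $y\in\{\eta\}$, condition \eqref{hyp-unif} does \emph{not} guarantee $g_y>0$ $\sigma$-a.e.; by Fubini it only gives this for $\sigma$-a.e.\ $y$. Thus for an arbitrary $\eta$ with $\{\eta\}\subseteq\mathrm{Supp}(\sigma)$ the set $\{z:g_y(z)=0\}$ may have positive $\sigma$-measure, and then your scenario density vanishes on the positive-$\widehat{\sigma^2}$-measure set where both $z_1$ and $z_2$ fall in that set. The repair is easy and already implicit in your reduction: you do not need positivity for \emph{every} $\eta$, only for a set of positive $\nu$-measure. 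Since $\nu\ll\mu$ (Proposition~\ref{acI}) and $\mu$ is built from products of $\sigma$, for $\nu$-a.e.\ $\eta$ every trait $y\in\{\eta\}$ lies in the full-measure set $D=\{y:\sigma(g_y=0)=0\}$ (this is exactly the set introduced in the proof of Proposition~\ref{propo2}(iii)); for such $\eta$ your density lower bound is indeed $\widehat{\sigma^2}$-a.e.\ positive, and the integral $\int\PP_\eta(Y_t\in\atomiques_{(1,1)}\cap B)\,d\nu(\eta)$ is strictly positive as required.
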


\begin{proof}
Let us consider a q.s.d. $\nu$ with exponential decay rate
$\theta=\lambda-b$. Lemma \ref{thetaI} implies that the
restriction $\nu_{(1)}$ of $\nu$ to $\atomiques_1=\{\eta\in
\espace: \|\eta\|=1\}$, does not vanish. On the other hand by the
previous result, it is absolutely continuous with respect to
$\sigma$. Then,
$$
d\nu_{(1)}=f_{1} d\sigma
$$
for some nonnegative function $f_{1}$ that does not
vanish on a set of $\sigma$ positive measure.

\bigskip

For any function  $f$  in $C_{b}(\atomiques)$ such that
$f(0)=0$ and such that is supported in a compact set, that is
$f(\eta)=0$ for all
$\|\eta\|$ large enough, it follows from
$\nu(P_{t}f)=\exp(-\theta t)\,\nu(f)$ that
$$
\nu(L f \indiq_{\atomiques^{-0}})=-\theta\; \nu(f)\;.
$$
Since this is true for any such function, we get (with $\overline
\eta=(\eta_y: y \in \{\eta\})$ as defined in (\ref{discrete-str}),
and notation (\ref{igualdad1}) and (\ref{igualdad2})),
\begin{eqnarray}
\nonumber
-\theta d\nu_{{\overline \eta}}(\etat) &=&
b(1\!-\!\rho)\sum\limits_{y: \eta_y>1}
\big(\eta_y-1){\overline \nu}({\overline \eta}^{\, -y})
d\nu_{{\overline \eta}^{\, -y}}(\etat)\\
\nonumber
&{}&
+\lambda\sum\limits_{y\in \{\eta\}}\!
\big(\eta_y\!+\!1\big){\overline \nu}({\overline \eta}^{\, +y})
d\nu_{{\overline \eta}^{\, +y}}(\etat)
\!+\!\lambda
{\overline \nu}({\overline \eta}^{\, +z})
\!\!\!\!\!\!
\int\limits_{z\in \traits\setminus \{{\vec y}\}} \!\!\!\!\!\!
d\nu_{{\overline \eta}^{\, +z}} (\etat^{\,  +z})\\
\nonumber
&{}&
+b\,\rho \sum\limits_{y: \eta_y=1}
{\overline \nu}({\overline \eta}^{\, -y})
\sum\limits_{y'\in \{\eta\}\setminus \{y\}}
\eta_{y'}\, g_{y'}(y) \; d\nu_{{\overline \eta}^{\, -y}}
(\etat^{\, -y})\, d\sigma(y)\\
\label{dualu}
&{}&
-(\lambda+b)\left(\sum\limits_{y\in \{\eta\}} \eta_y \right)\;
{\overline \nu}({\overline \eta})\, d\nu_{{\overline \eta}}(\etat) \;.
\end{eqnarray}

It follows from equation (\ref{dualu}) applied to the measure $\nu$ and
solving for $\nu_{\vec q}$ that for some constant $C>0$ we have
$$
d\nu_{(1,1)}\big(y_{1},y_{2}\big)\ge C
\left(f_{1}(y_{1})g_{y_{1}}(y_{2})+f_{1}(y_{2})g_{y_{2}}(y_{1})\right)\,
d\sigma(y_{1})\,d\sigma(y_{2})\;.
$$
Using this lower bound in the equation for $\nu_{(1)}$, we get for
some constant $C'>0$
$$
f_{1}(y)\ge C'\int f_{1}(u)\, g_u(y)\,d\sigma(u)\;.
$$
Therefore for some constant $C''>0$ we have the estimate
\begin{eqnarray*}
&{}& d\nu_{(1,1)}\big(y_{1},y_{2}\big)\ge
C''\,d\sigma(y_{1})\,d\sigma(y_{2})\times\\
&{}& \quad\quad
\left(g_{y_{1}}(y_{2})\! \int \!\!f_{1}(u) g_u(y_{1})\,d\sigma(u)+
g_{y_{2}}(y_{1})\!\int \!\! f_{1}(u) g_u(y_{2})d\sigma(u)\right)\,.
\end{eqnarray*}
Let $B$ be a Borel set with $\mu(\atomiques_{(1,1)} \cap B)>0$. By the
identification
between $\atomiques_{(1,1)}$ and ${(1,1)}\times \widehat {\traits^{2}}$
it can be assumed that $B\subseteq \widehat {\traits^{2}}$. We get from
Fubini's
Theorem
$$
\nu(\atomiques_{(1,1)} \cap B)\ge C''\int\limits_{\widehat
{\traits^{3}}}{\bf 1}_{B}(y_{1},y_{2}) g_{y_{1}}(y_{2})\,
f_{1}(u)\,
g_u(y_{1})\,d\sigma(u)\,d\sigma(y_{1})\,d\sigma(y_{2})\;.
$$
Therefore, if $\nu(\atomiques_{(1,1)} \cap B)=0$, we must have
$$
{\bf 1}_{B}(y_{1},y_{2}) \,
g_{y_{1}}(y_{2})\, f_{1}(u)\,
g_u(y_{1})=0\; \;\,  \sigma\times\sigma\times\sigma-\hbox{a.e.}\,,
$$
which implies from the hypothesis (\ref{hyp-unif}) on $g$
$$
{\bf 1}_{B}(y_{1},y_{2}) \, f_{1}(u)=0 \; \;\,
\sigma\times\sigma\times\sigma-\hbox{a.e.}\,.
$$
However this implies $f_{1}=0\;$ $\sigma-$a.e.,
which is a contradiction.
\end{proof}

\bigskip

\begin{proposition}
\label{uniIV} There is a unique q.s.d. associated with the
exponential decay rate $\theta=\lambda-b$.
\end{proposition}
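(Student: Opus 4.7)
The strategy is to reduce uniqueness of the q.s.d.\ at decay rate $\theta=\lambda-b$ to uniqueness of the invariant probability measure of an auxiliary conservative Markov semigroup, obtained by an $h$-transform. Taking $h=\varphi_{1}$, $h(\eta)=\|\eta\|$, a direct computation from \eqref{w_generator} (already observed in the paper just before Proposition \ref{acI}) gives $Lh=-\theta h$, hence $P_{t}h=e^{-\theta t}h$ by Proposition \ref{martingale}. I would then define
\[
\tilde P_{t}f(\eta)=\frac{e^{\theta t}}{h(\eta)}\,P_{t}(hf)(\eta),\qquad\eta\in\atomiques^{-0}.
\]
The identity $P_{t}h=e^{-\theta t}h$ gives $\tilde P_{t}\mathbf 1=1$; the semigroup $(\tilde P_{t})$ is conservative on $\atomiques^{-0}$ (the absorbing state $0$ is inaccessible since $h(0)=0$) and Feller by Proposition \ref{feller}.

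The next step is to establish a bijection between q.s.d.\ of $Y$ at rate $\theta$ and $\tilde P$-invariant probability measures. A direct verification using $P_{t}^{\dagger}\nu=e^{-\theta t}\nu$ on test functions of the form $hf$ shows that $\nu\mapsto\tilde\nu:=h\,\nu/\nu(h)$ sends a q.s.d.\ $\nu$ with $\nu(h)<\infty$ to a $\tilde P$-invariant probability measure, with inverse $\tilde\nu\mapsto h^{-1}\tilde\nu/\tilde\nu(h^{-1})$. The finiteness $\nu(h)<\infty$ is automatic: the marginal $k\mapsto\nu(\atomiques_{k})$ is a q.s.d.\ of the birth-and-death chain $\|Y\|$ at rate $\theta$, which by the van Doorn result quoted from \cite{vandoorn} must equal $\zeta^{\bf e}$ of \eqref{chhc}, giving $\nu(h)=\sum_{k\ge1}k\,\zeta^{\bf e}(k)=\lambda/(\lambda-b)$. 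Thus the bijection covers every q.s.d.\ at rate $\theta$, and uniqueness of the q.s.d.\ is equivalent to uniqueness of the $\tilde P$-invariant probability.

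Finally, I would prove uniqueness of the $\tilde P$-invariant probability by establishing a Doeblin-type minorization. Combining Proposition \ref{propo2}(iii), Proposition \ref{acI} and Lemma \ref{conjecture} with condition \eqref{hyp-unif}, I expect to produce a compact set $C\subseteq\atomiques_{(1,1)}\cap\{\eta:\{\eta\}\subseteq\hbox{Supp}(\sigma)\}$ of positive $\mu$-measure, a time $T>0$, and $\varepsilon>0$ such that
\[
\tilde P_{T}(\eta,B)\;\ge\;\varepsilon\,\mu(B\cap C)\qquad\text{for all }\eta\in C,\ B\in\B(\atomiques^{-0}).
\]
Lemma \ref{conjecture} (translated to $\tilde\nu$ via $\tilde\nu=h\nu/\nu(h)$) ensures that any $\tilde P$-invariant probability charges $C$, and the classical Doeblin/Nummelin split-chain argument then yields uniqueness of the invariant probability; by the bijection, this gives uniqueness of the q.s.d.\ at rate $\theta=\lambda-b$.

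\emph{Main obstacle.} The hard step is producing the uniform Doeblin minorization on $C$. Proposition \ref{propo2}(iii) only provides a \emph{pointwise}, qualitative positivity of the absolutely continuous part of $P_{t}(\eta,\cdot)$ on small balls around $\eta$. Upgrading this to a uniform-in-$\eta\in C$ lower bound for the $\mu$-density of $\tilde P_{T}(\eta,\cdot)$ on a fixed target set will require exploiting the joint continuity of the mutation kernel $g$ and compactness of $\traits$, together with a judicious choice of the horizon $T$ — large enough that a prescribed cascade of mutation/death/clonal events reliably pushes the process into a fixed region sitting inside $C$, yet with a uniform positive probability.
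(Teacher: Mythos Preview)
Your $h$-transform reduction is correct and standard: with $h=\varphi_1$ one has $P_t h=e^{-\theta t}h$, the semigroup $\tilde P_t$ is conservative on $\atomiques^{-0}$, and the map $\nu\mapsto h\nu/\nu(h)$ is a bijection between q.s.d.\ at rate $\theta$ and $\tilde P$-invariant probabilities (the finiteness $\nu(h)<\infty$ via \eqref{chhc} is fine). But the Doeblin step is a genuine gap, not merely a technicality. Condition \eqref{hyp-unif} together with continuity of $g$ does \emph{not} give a uniform positive lower bound on $g$: the zero set $\{g=0\}$ can be a nonempty closed $\sigma\otimes\sigma$-null set, so for $\eta$ near such points the density of the absolutely continuous part of $P_T(\eta,\cdot)$ need not be bounded below. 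Nothing in Propositions \ref{propo2} or \ref{acI} or Lemma \ref{conjecture} yields the required uniformity in $\eta\in C$, and you have not produced an argument that does. At best one might hope to manufacture a small set by a careful choice of $C$ inside a region where $g$ is bounded below, but this is real work that is absent from the proposal.

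The paper's proof takes a different and much lighter route that completely sidesteps Doeblin. Given two q.s.d.\ $\nu,\nu'$ at rate $\theta$, write the Lebesgue decomposition $\nu'=f\nu+\xi$ with $\xi\perp\nu$. Since $R_t^\dagger(f\nu)\ll\nu$ and $R_t^\dagger\nu'=\nu'$, one gets $R_t^\dagger(f\nu)\le f\nu$; integrating the $R_t$-invariant function $\varphi_1$ forces equality, hence $R_t^\dagger\xi=\xi$. Thus (after normalization) $\xi$ would itself be a q.s.d.\ at rate $\theta$ that is mutually singular with $\nu$. But Lemma \ref{conjecture} says \emph{every} q.s.d.\ at rate $\theta$ charges every $\mu$-positive subset of $\atomiques_{(1,1)}$, which is impossible for two mutually singular measures; so $\xi=0$ and $\nu'=f\nu$. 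A second application of the $\varphi_1$-integration trick, now to $|\nu-\nu'|$, forces $f\equiv1$. The whole argument uses only absolute continuity (Proposition \ref{acI}), the qualitative positivity in Lemma \ref{conjecture}, and the single eigenfunction $\varphi_1$; no quantitative minorization is needed. Compared with your plan, this trades the hard analytic uniformity for a short measure-theoretic argument that already lives at the level of the tools developed in the paper.
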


\begin{proof}
Let $\nu$ and $\nu'$ be two different q.s.d. with the exponential
decay rate $\theta$. We can write the Lebesgue decomposition
$$
\nu'=f\nu+\xi
$$
with $f$ a nonnegative measurable function and $\xi$ a singular
measure with respect to $\nu$. Assume $\xi\neq0$.
Applying $R_{t}^{\dag}$ we
get
$$
\nu'=f\nu+\xi=R_{t}^{\dag}(f\nu)+R_{t}^{\dag}(\xi)\;.
$$
If $f$ is bounded, since $\nu$ is a q.s.d., we have
$R_{t}^{\dag}(f\nu)\ll \nu$. In the general case, the same result
holds by approximating $f$ by an increasing sequence of
nonnegative functions. Therefore, we must have
$$
R_{t}^{\dag}(f\nu)\le f\nu\;.
$$
Integrating the function $\varphi_1$ as before, we conclude that
$R_{t}^{\dag}(f\nu)=f\nu$, and therefore  $R_{t}^{\dag}(\xi)=\xi$.

\bigskip

Then, we have two q.s.d. $\nu$ and $\xi$ with exponential decay
rate $\theta=\lambda-b$, which are mutually singular. We claim
that this is excluded by Lemma \ref{conjecture}. Indeed let $B$ be
a measurable subset such that $\xi(B)=\nu(B^c)=0$. Then $\nu(B\cap
\atomiques_{(11)})= \nu(\atomiques_{(11)})>0$. Since $\nu << \mu$
we get  $\mu(B)\ge \mu(B\cap \atomiques_{(11)})>0$.
 From Proposition \ref{conjecture}
we deduce $\xi(B\cap \atomiques_{(11)})>0$ which is a
contradiction. Namely $\xi=0$ and we conclude that $\nu'=f\nu$.
Let us now show that $f\equiv 1$, which will yield $\nu'=\nu$ and
so will conclude the uniqueness result.

\bigskip

Recall the following notation on the ordered lattice of measures
$\borel_f(\atomiques^{-0})$: $\; |v|=v^+ + (-v)^+$ with
$v^+=\max(v,0)$. Since the linear operator $R_t^{\dag}$ is
positive it holds $|R_t^{\dag}(v)|\le R_t^{\dag}|v|$, that is for
all positive and measurable functions $\varphi$, it holds $\int \varphi \,d
|R_t^{\dag}(v)|\le \int \varphi  \,d R_t^{\dag}|v|$. Moreover,
when there exists a couple of sets $A_1, \, A_2$ such that
$R_t^{\dag}(v)(A_1)> 0 > R_t^{\dag}(v)(A_2)$ and
$R_t^{\dag}|v|(A_1)>0$, $R_t^{\dag}|v|(A_2)>0$, this inequality
becomes strict and we put $|R_t^{\dag}(v)|< R_t^{\dag}|v|$. This
means that
$$
\forall \; \varphi>0\,,\; \int \varphi  \,d R_t^{\dag}|v|<\infty\,
\hbox{ where } \;\;\;\; \int \varphi \,d |R_t^{\dag}(v)|<\int
\varphi \,d R_t^{\dag}|v| \,.
$$

\medskip

Assume that $\nu(f\neq 1)>0$, which implies $\mu(f\neq 1)>0$.
Thus, the sets $A_1=\{f<1\}$ and $A_2=\{f> 1\}$ fulfill the
requirements for the signed measure $v=\nu-\nu'$. Then, by using
that $\nu$ and $\nu'$ are $R_t^{\dag}$ invariant, we find
$$
\big|\nu-\nu'\big|=\big|R_{t}^{\dag}\big(\nu\big)
-R_{t}^{\dag}\big(\nu'\big)\big| <
R_{t}^{\dag}\big(\big|\nu-\nu'\big|\big)\,.
$$
Since $\varphi_1$ is $R_t$ invariant and $\int \varphi_1
\,d\big|\nu-\nu'\big|<\infty$, we find
$$
\int \varphi_1\,d\big|\nu-\nu'\big|<
\int \varphi_1 \,dR_{t}^{\dag}\big(\big|\nu-\nu'\big|\big)
=\int \varphi_1 \,d\big|\nu-\nu'\big|\;,
$$
which is a contradiction since $\varphi_1\ge 1$. The result is
shown.
\end{proof}

The proof of Theorem \ref{Theoch1} is now complete,
it follows from Propositions \ref{acI} and \ref{uniIV}.

\section*{Acknowledgments}

The authors acknowledge the partial support given by the
Millennium Nucleus Information and Randomness P04-069-F,
CMM FONDAP and CMM BASAL projects. S. Mart\'{\i}nez thanks
Guggenheim Fellowship and the hospitality of Ecole Polytechnique,
Palaiseau, and Sylvie M\'el\'eard the ECOS-CONICYT project.

\bigskip

\noindent PIERRE COLLET

\noindent {\it CNRS Physique Th\'eorique, Ecole Polytechnique, F- 91128
Palaiseau Cedex, France.} e-mail: collet@cpht.polytechnique.fr

\medskip

\noindent SERVET MART\'INEZ

\noindent {\it Departamento Ingenier{\'\i}a Matem\'atica and Centro
Modelamiento Matem\'atico, Universidad de Chile,
UMI 2807 CNRS, Casilla 170-3, Correo 3, Santiago, Chile.}
e-mail: smartine@dim.uchile.cl

\medskip

\noindent SYLVIE M\'EL\'EARD

\noindent  {\it Ecole Polytechnique, CMAP, CNRS-UMR7641,  F- 91128
Palaiseau Cedex, France. } e-mail: meleard@cmap.polytechnique.fr

\medskip

\noindent JAIME SAN MART\'IN

\noindent {\it Departamento Ingenier{\'\i}a Matem\'atica and Centro
Modelamiento Matem\'atico, Universidad de Chile, UMI 2807 CNRS,
Casilla 170-3, Correo 3, Santiago, Chile.}
e-mail: jsanmart@dim.uchile.cl


\begin{thebibliography}{1}

\bibitem{billingsley}
{\sc Billingsley, P.} (1968). {\it Convergence of probability
measures}, Wiley and Sons, New York.


\bibitem{catt}
{\sc Cattiaux P.; Collet, P.; Lambert, A.; Martinez S.;
M\'el\'eard S,; San Mart\'{\i}n, J.} (2009)
\newblock Quasi-stationary distributions and diffusions
models in population dynamics.
\newblock To appear in Ann. Probab.

\bibitem{cattme}
{\sc Cattiaux P.; M\'el\'eard S.} (2009).
\newblock Competitive or weak cooperatie stochastic
lotka-Volterra systems conditioned on non-extinction.
\newblock To appear in J. Math. Biology.


\bibitem{cms}
{\sc Collet, P.; Mart\'{\i}nez, S.; San Mart\'{\i}n, J.} (1995).
\newblock Asymptotic laws for onedimensional diffusions conditioned
to nonabsorption.
\newblock Ann. Probab.  23, 1300--1314.


\bibitem{CFM}
{\sc Champagnat, N.;  Ferri\`ere R.; S. M\'el\'eard S.} (2006).
\newblock Unifying evolutionary dynamics: from individual
stochastic processes to macroscopic models.  Theoret. Pop. Biology 69,
297--321.


\bibitem{DS} {\sc Darroch, J.N.; Seneta, E.} (1965).
\newblock On quasi-stationary distributions in absorbing discrete-time
finite Markov chains.
\newblock J. Appl. Prob. 2, 88--100.


\bibitem{DV-J} {\sc Daley, D.J.; Vere-Jones, D.} (1988).
\newblock {\it An Introduction to the theory of point processes.
\newblock Springer Series in Statistics}. Springer-Verlag, New York.

\bibitem{ds} {\sc Dunford, N.; Schwartz, J.} (1958). {\it Linear
 Operators Vol. I}. Interscience.



\bibitem{feller} {\sc Feller, W.} (1968)
\newblock {\it An Introduction to probability Theory and Its
Applications}
\newblock John Wiley and Sons, New York

\bibitem{FKMP}
{\sc Ferrari, P. A.; Kesten, H.; Martinez S.; Picco, P.} (1995).
\newblock Existence of quasi-stationary distributions.
A renewal dynamical approach. Ann. Probab.  23,  No. 2, 501--521.

\bibitem{FM}
{\sc Fournier, N.; M\'el\'eard, S.} (2004).
\newblock A microscopic probabilistic description of a locally
regulated population and macroscopic approximations. Ann. Appl.
Probab. 14, No. 4, 1880--1919.

\bibitem{Gosselin} {\sc Gosselin, F.} (2001),
\newblock Asymptotic behavior os absorbing Markov chains conditional
on nonabsorption for applications in conservation biology.
Ann. Appl. Probab. 11, No. 1, 261--284.



\bibitem{mandl}
{\sc Mandl, P.} (1961),
\newblock Spectral theory of semi-groups connected with diffusion processes
and its applications.
\newblock Czech. Math. J. 11, 558-569.

\bibitem{ms1}
{\sc S. Mart\'{\i}nez; San Mart\'{\i}n, J.} (2004),
Classification of killed one-dimensional diffusions.
Annals of Probability 32, No. 1,530-552.


\bibitem{pinsky} {\sc Pinsky R.} (1985),
\newblock On the convergence of diffusion processes conditioned to
remain in bounded region for large time to limiting positive recurrent
diffusion processes.
Ann. Probab. 13, 363-378.


\bibitem{np} {\sc Nair, M.G.; Pollett, P.} (1993).
\newblock On the relationship between $\mu-$invariant measures and
quasi-stationary distributions for continuous-time Markov chains.
\newblock Adv. Appl. Probab. 25, no. 1, 82--102.

\bibitem{Renault}
{\sc Renault, O.: Ferri\`ere, R.; Porter, J.}.
The quasi-stationary route to extinction. Private communication.


\bibitem{Steinsaltz}
{\sc Steinsaltz, D,; Evans S.} (2007),
\newblock Quasistationary distributions for one-dimensional
diffusions with killing. Trans. Amer. Math. Soc. 359,  no. 3,
1285--1324.

\bibitem{tycho} {\sc Tychonov, A.}(1935) Ein Fixpunktsatz.
Math. Annalen 117, 767--776.


\bibitem{vandoorn} {\sc Van Doorn, E.} (1991). Quasi-stationary distributions
and convergence to quasi-stationary of birth-death processes. Adv. in
Appl. Probab. 23, No 4, 683--700.

\bibitem{vj} {\sc Vere-Jones, D.} (1962).
\newblock Geometric ergodicity in denumerable Markov chains.
\newblock Quart. J. Math. Oxford (2) 13, 7-28.


\bibitem{yaglom} {\sc Yaglom, A.M.} (1947).
\newblock Certain limit theorems of the theory of branching processes
(in russian).
\newblock Dokl. Akad. Nauk. SSSR 56, 795-798.

\end{thebibliography}
\end{document}